\tikzset{snake it/.style={decorate, decoration=snake}}
\numberwithin{equation}{section}
\def\Line(#1,#2)(#3,#4){\qbezier(#1,#2)(#1,#2)(#3,#4)}
\newcommand{\inp}[2][]{\left(#1, #2\right)}
\newcommand{\gnp}[2][]{\langle#1, #2\rangle}
\newtheorem{remark}{Remark}[section]
\newtheorem{lemma}{Lemma}[section]
\newtheorem{theorem}{Theorem}[section]
\newtheorem{definition}{Definition}[section]
\def\Tc{\mathcal{T}}
\def\Mc{\mathcal{M}}
\def\BDM{\mathcal{BDM}}
\def\BDDF{\mathcal{BDDF}}
\def\Pc{\mathcal{P}}
\def\X{\mathbb{X}}
\def\W{\mathbb{W}}
\def\R{\mathbb{R}}
\def\M{\mathbb{M}}
\def\S{\mathbb{S}}
\def\N{\mathbb{N}}
\def\H{\mathbb{H}}
\def\s{\sigma}
\def\t{\tau}
\def\g{\gamma}
\def\c{\chi}
\def\z{\zeta}
\def\tet{\theta}
\def\del{\delta}
\def\r{\mathbf{r}}
\def\As{A_{\s\s}}
\def\Au{A_{\s u}}
\def\Ag{A_{\s\g}}
\def\Eh{\hat{E}}
\def\Mh{\hat{M}}
\def\rh{\hat{\r}}
\def\eh{\hat{e}}
\def\xh{\hat{x}}
\def\yh{\hat{y}}
\def\th{\hat{\t}}
\def\Xh{\hat{\X}}
\def\Vh{\hat{V}}
\def\asym{\operatorname{as\,}}
\def\tr{\operatorname{tr\,}}
\def\Quh{\mathcal{R}^{u}_h}
\def\Qgh{\mathcal{R}^{\g}_h}
\def\skew{\operatorname{Skew}}
\def\curl{\operatorname{curl}}
\def\dvr{\operatorname{div}}          
\def\dvrg{\operatorname{div}}  
\def\O{\Omega}
\def\Gn{\Gamma_N}
\def\Gd{\Gamma_D}
\begin{document}

\title{A multipoint stress mixed finite element method for elasticity on simplicial grids}

\author{Ilona Ambartsumyan\thanks{Oden Institute for Computational Engineering and Sciences, The University of
		Texas at Austin, Austin, TX 78712, USA;~{\tt \{ailona@austin.utexas.edu, ekhattatov@austin.utexas.edu,\}}. Partially supported by DOE grant DE-FG02-04ER25618 and NSF grants DMS 1418947 and DMS 1818775.}~\and
		Eldar Khattatov\footnotemark[1]\and
		Jan M. Nordbotten\footnotemark[2]\thanks{Department of Mathematics, University of Bergen, Bergen, 7803, Norway;~{\tt \{Jan.Nordbotten@uib.no\}}. Funded in part through Norwegian Research Council grants 250223, 233736, 228832.}\and
		Ivan Yotov\footnotemark[2]\thanks{Department of Mathematics, University of
		Pittsburgh, Pittsburgh, PA 15260, USA;~{\tt \{yotov@math.pitt.edu\}}. Partially supported by DOE grant DE-FG02-04ER25618 and NSF grants DMS 1418947 and DMS 1818775.}~}

\date{\today}
\maketitle
\begin{abstract}
We develop a new multipoint stress mixed finite element method for
linear elasticity with weakly enforced stress symmetry on simplicial
grids. Motivated by the multipoint flux mixed finite element method
for Darcy flow, the method utilizes the lowest order
Brezzi-Douglas-Marini finite element spaces for the stress and the
vertex quadrature rule in order to localize the
interaction of degrees of freedom. This allows for local stress
elimination around each vertex.  We develop two variants of the
method. The first uses a piecewise constant rotation and results in a
cell-centered system for displacement and rotation. The second uses a
piecewise linear rotation and a quadrature rule for the asymmetry
bilinear form. This allows for further elimination of the rotation,
resulting in a cell-centered system for the displacement
only. Stability and error analysis is performed for both
variants. First-order convergence is established for all variables in
their natural norms. A duality argument is further employed to prove
second order superconvergence of the displacement at the cell
centers. Numerical results are presented in confirmation of the
theory.
\end{abstract}

\section{Introduction}
Mixed finite element (MFE) methods for elasticity with
stress-displacement formulations provide accurate stress, local
momentum conservation, and robust treatment of almost incompressible
materials. Numerous methods with strong stress symmetry
\cite{ArnWin-elast,arnold2005rectangular,brezzi2008mixed} and weak
stress symmetry
\cite{arnold1984peers,stenberg1988family,arnold2007mixed,boffi2009reduced,
  cockburn2010new,gopalakrishnan2012second,arnold2015mixed,Awanou-rect-weak,lee2016towards}
have been developed. A drawback of these methods is that the resulting
algebraic systems are of saddle point type. {\color{black} Two
  common approaches for reducing MFE formulations to positive definite
  systems include hybridization, resulting in skeletal systems, and
  reduction to cell-centered systems.  In the context of stress-displacement elasticity formulations,
  hybridization is possible for non-conforming MFE methods
  \cite{Arnold-Winther-elast-nonconf-2d,Arnold-etal-elast-nonconf-3d,GG-elast-nonconf} or 
hybridizable discontinuous Galerkin (HDG) methods
  \cite{Cockburn-Shi-HDG-elast,Qui-etal-HDG-elast}. These methods require facet displacement degrees of
  freedom corresponding to polynomials of at least first order.}

{\color{black} In this paper we develop MFE methods for elasticity on simplicial
grids that can be reduced to symmetric and positive definite cell
centered systems based on piecewise constant
  approximations. These methods have reduced computational complexity
compared to hybrid formulations, both due to the smaller polynomial degree and the 
fact that there are fewer elements than facets.}
  Our approach is motivated by the multipoint flux
mixed finite element (MFMFE) method
\cite{wheeler2006multipoint,Ing-Whe-Yot,WheXueYot-nonsym} for Darcy
flow, which is closely related to the multipoint flux approximation
(MPFA) method
\cite{aavat2002introduction,aavat1998discretization,edwards2002unstructured,edwards1998finite,agelas2010convergence}.
The MPFA method is a finite volume method obtained by eliminating
fluxes around mesh vertices in terms of neighboring pressures. It can
handle discontinuous full tensor coefficients and general grids, thus
improving on previously developed cell centered finite difference
methods resulting from MFE methods
\cite{russell1983finite,arbogast1997mixed,ADKWY}, which work for
smooth grids and/or coefficients. The MFMFE method
\cite{wheeler2006multipoint,Ing-Whe-Yot} utilizes the lowest order
Brezzi-Douglas-Marini $\BDM_1$ spaces \cite{brezzi1985two} on
simplicial and quadrilateral grids, see also a similar approach in
\cite{Brezzi.F;Fortin.M;Marini.L2006} on simplices, as well as an
enhanced Brezzi-Douglas-Duran-Fortin $\BDDF_1$ space
\cite{brezzi1991mixed} on hexahedra. An alternative formulation based
on a broken Raviart-Thomas velocity space is developed in
\cite{klausen2006robust}. A common feature of the above mentioned
methods is that the velocity space has only degrees of freedom that
are normal components of the vector on the element boundary, such that
on any facet (edge or face) there is one normal velocity associated
with each of the vertices. An application of the vertex quadrature
rule for the velocity bilinear form results in localizing the
interaction of velocity degrees of freedom around mesh vertices. The
fluxes then can be locally eliminated, resulting in a cell centered
pressure system. The variational framework of the MFMFE methods allows
for combining MFE techniques with quadrature error analysis to
establish stability and convergence results.

In \cite{Jan-IJNME}, the multipoint stress approximation (MPSA) method
for elasticity was developed, which is a displacement finite volume
method based on local stress elimination around vertices in a manner
similar to the MPFA method. The method does not have a mixed finite
element interpretation, but its stress degrees of freedom correspond
to the $\BDM_1$ degrees of freedom. The MPSA method was analyzed in
\cite{nordbotten2015convergence} by being related to a non-symmetric
discontinuous Galerkin (DG) method. A weak symmetry MPSA method has
been developed in \cite{keilegavlen2017finite}.

In this paper we develop two stress-displacement MFE methods for
elasticity on simplicial grids that reduce to cell centered
systems. We consider the formulation with weakly imposed stress
symmetry, for which there exist MFE spaces with $\BDM_1$ degrees of
freedom for the stress and piecewise constant displacement. In this
formulation the symmetry of the stress is imposed weakly using a
Lagrange multiplier, which is a skew-symmetric matrix and has a
physical meaning of rotation.  Our first method is based on the
Arnold-Falk-Winther (AFW) spaces \cite{arnold2007mixed}
$\BDM_1\times\Pc_0\times\Pc_0$, i.e., $\BDM_1$ stress and piecewise
constant displacement and rotation. Since in $\R^d$ there are $d$
normal stress vector degrees of freedom per facet, one degree of
freedom can be associated with each vertex, We employ the vertex
quadrature rule for the stress bilinear form, which localizes the
stress degrees of freedom interaction around vertices, resulting in a
block-diagonal stress matrix.  {\color{black} This approach resembles
  the well-known mass-lumping procedure.}  The stress is then locally
eliminated and the method is reduced to a symmetric and positive
definite cell centered system for the displacement and rotation. This
system is smaller and easier to solve than the original saddle point
problem, but no further reduction is possible.  {\color{black}Our
  second method is based on the modified AFW triple
  $\BDM_1\times\Pc_0\times\Pc_1$ proposed in \cite{brezzi2008mixed}.}
The difference from the first method is that the rotation is
continuous piecewise linear. In this method we employ the vertex
quadrature rule both for the stress and the asymmetry bilinear
forms. This allows for further local elimination of the rotation after
the initial stress elimination, resulting in a symmetric and positive
definite cell centered system for the displacement only. This is a
very efficient method with computational cost comparable to the MPSA
method. Adopting the MPSA terminology, we call our method a multipoint
stress mixed finite element (MSMFE) method, with the two variants
referred to as MSMFE-0 and MSMFE-1, where the number corresponds to
the rotation polynomial degree.

{\color{black} We note that the MSMFE methods inherit the locking-free
  property of stress-displacement MFE methods for elasticity. A
  numerical example illustrating the robustness of the MSMFE methods
  for nearly-incompressible materials is presented in the numerical
  section.}  {\color{black} We should mention that a number of
  locking-free primal-formulation methods have also been developed,
  including DG \cite{riviere2000optimal}, a hybrid high order 
  method \cite{dipietro2015hybrid}, finite element methods with the
  Crouzeix-Raviart space \cite{Brenner-Sung,dipietro2015extension,
    Hansbo-Larson}, weak Galerkin methods \cite{WG-elast}, a virtual
  element method \cite{VEM-elast}, and a hybrid finite volume method
  \cite{DiPietro-FV-elasticity}. These methods have been developed on
  general polygonal grids, and many of them can have arbitrary order
  of approximation. However, they require postprocessing for computing
  the stress and do not provide local equilibrium with
  $H(\dvr)$-conforming stress.}
 
We perform stability and error analysis for both MSMFE methods. The
stability analysis follows the framework established in previous works
on MFE methods for elasticity with weak stress symmetry
\cite{arnold2007mixed,arnold2015mixed}, and utilizes the classical
Babu\v{s}ka-Brezzi conditions \cite{brezzi1991mixed}.  While the
stability of the MSMFE-0 method is relatively straightforward, the
analysis of the MSMFE-1 method is not. It requires establishing an
inf-sup condition for the Taylor-Hood Stokes pair with vertex
quadrature in the divergence bilinear form. We do this by employing a
macroelement argument, following an approach developed in
\cite{stenberg1984analysis}. We note that our analysis differs from
the one in \cite{stenberg1984analysis}. In particular, the application
of the vertex quadrature rule leads to additional technical
difficulties in the inf-sup analysis, since the control of the
pressure degrees of freedom by the velocity in the divergence bilinear
form is weakened. We proceed with establishing first order convergence
for the stress in the $H(\dvr)$-norm and for the displacement and
rotation in the $L^2$-norm for both methods. The arguments combine
techniques from MFE analysis and quadrature error analysis. A duality
argument is further employed to prove second order superconvergence of
the displacement at the cell centers.

The rest of the paper is organized as follows. The model problem and
its MFE approximation are presented in Section~2.  The two methods are
developed and their stability is analyzed in Sections 3 and 4,
respectively. Section 5 is devoted to the error analysis. Numerical
results are presented in Section 6.

\section{Model problem and its MFE approximation}
In this section we recall the weak stress symmetry formulation of the
elasticity system. We then present its MFE approximation and a
quadrature rule, which form the basis for the MSMFE methods presented
in the next sections.

Let $\O$ be a simply connected bounded domain of $\R^d,\, d=2,3$
occupied by a linearly elastic body. We write $\M$, $\S$ and $\N$ for
the spaces of $d\times d$ matrices, symmetric matrices and
skew-symmetric matrices, all over the field of real numbers,
respectively. The material properties are described at each point $x
\in \O $ by a compliance tensor $A = A(x)$, which is a symmetric,
bounded and uniformly positive definite linear operator acting from
$\S$ to $\S$. We also assume that an extension of $A$ to an operator
$\M \to \M$ still possesses the above properties. As an example, in the
case of a homogeneous and isotropic body,
$$ A\sigma = \frac{1}{2\mu} \left( \sigma 
- \frac{\lambda}{2\mu + d\lambda}\operatorname{tr}(\sigma)I \right), 
$$
where $I$ is the $d \times d$ identity matrix and $\mu > 0$, $\lambda > -2\mu/d$
are the Lam\'{e} coefficients.

Throughout the paper the divergence operator is the usual divergence
for vector fields. When applied to a matrix field, it produces a
vector field by taking the divergence of each row. We will also use
the curl operator which is the usual curl when applied to vector
fields in three dimensions, and it is defined as
\begin{align*}
    \curl{\phi} = (\partial_2 \phi, -\partial_1 \phi) 
\end{align*}
for a scalar function $\phi$ in two dimensions. Consequently, for a
vector field in two dimensions or a matrix field in three dimensions,
the curl operator produces a matrix field, by acting row-wise.

Throughout the paper, $C$ denotes a generic positive constant that is independent of the discretization parameter $h$. We will also use the following standard notation. For a domain $G \subset \R^d$, the $L^2(G)$ inner product and norm for scalar and vector valued
functions are denoted $\inp[\cdot]{\cdot}_G$ and $\|\cdot\|_G$, respectively. The norms and seminorms of the Sobolev spaces $W^{k,p}(G),\, k \in \R, p>0$ are denoted by $\| \cdot \|_{k,p,G}$ and $| \cdot |_{k,p,G}$, respectively. The norms and seminorms of the Hilbert spaces $H^k(G)$ are denoted by $\|\cdot\|_{k,G}$ and $| \cdot |_{k,G}$, respectively. We omit $G$ in the subscript if $G = \O$. For a section of the domain or element boundary $S \subset \R^{d-1}$ we write $\gnp[\cdot]{\cdot}_S$ and $\|\cdot\|_S$ for the $L^2(S)$ inner product (or duality pairing) and norm, respectively. We will also use the spaces 
\begin{align*}
    &H(\dvrg; \O) = \{v \in L^2(\O, \R^d) : \dvr v \in L^2(\O)\}, \\
    &H(\dvrg; \O, \M) = \{\t \in L^2(\O, \M) : \dvr \t \in L^2(\O,\R^d)\},
\end{align*}    
equipped with the norm
$$\|\t\|_{\dvr} = \left( \|\t\|^2 + \|\dvr\t\|^2 \right)^{1/2}.$$

Given a vector field $f$ on $\Omega$ representing body forces, equations of static elasticity in Hellinger-Reissner form determine the stress $\sigma$ and the displacement $u$ satisfying the constitutive and equilibrium equations respectively:
\begin{align}
    A\s = \epsilon(u), \quad \dvr \s = f \quad \text{in } \O, \label{elast-1}
\end{align}
together with the boundary conditions 
\begin{align}
    u = g \ \text{ on } \Gd, \quad  \s\,n = 0 \ \text{ on } \Gn, \label{elast-2}
\end{align}
where $\epsilon(u) = \frac{1}{2}\left(\nabla u + (\nabla u)^T\right)$
and $\partial\O = \Gd \cup \Gn$. We assume for simplicity that $\Gd \neq \emptyset$.

Introducing the Lagrange multiplier $\g = \skew(\nabla u)$,
$\skew(\tau) = \frac12(\tau - \tau^T)$, from the space of
skew-symmetric matrices to penalize the asymmetry of the stress
tensor, and using that $A\sigma = \nabla u - \gamma$,
we arrive at the weak formulation for
\eqref{elast-1}-\eqref{elast-2}, see for example
\cite{arnold2007mixed,arnold1984peers}: find $(\s, u, \gamma) \in
\X \times V \times \W$ such that
\begin{align} 
\begin{aligned}\label{weak-elast}
	\inp[A\s]{\t} + \inp[u]{\dvr \t} + \inp[\g]{\t} &= \gnp[g]{\t\, n}_{\Gd}, &\forall \t &\in \X,  \\
	\inp[\dvr \s]{v} &= \inp[f]{v}, &\forall v &\in V, \\
	\inp[\s]{\xi} &= 0, &\forall \xi &\in \W,
\end{aligned}
\end{align}
where the spaces are
$$\X = \big\{ \t\in H(\dvrg;\Omega,\M) : \t\,n = 0 \text{ on } \Gn  \big\}, \quad V = L^2(\Omega, \R^d), \quad \W = L^2(\Omega, \N).$$

Define the asymmetry operator 
$\asym:\M \to \R^{d(d-1)/2}$ such that
$$
\asym(\tau) = \tau_{12} - \tau_{21} \mbox{ in 2D} \,\,\mbox{ and } 
\asym(\tau) = (\tau_{32} - \tau_{23}, \tau_{31} - \tau_{13}, 
\tau_{21} - \tau_{12})^T \mbox{ in 3D}.
$$ 
Let 
$$
\H = \left\{ \begin{array}{l} \R^2, \, d = 2, \\ \M, \, d = 3, \end{array} \right.
$$
and define the invertible operators $S: \H \to \H$ and 
$\Xi: \R^{d(d-1)/2} \to \N$ as follows,
\begin{align}
\begin{aligned} \label{skew-extra}
&d=2: &&S(w) = w             \quad \mbox{for } w\in \R^2, & & \Xi(p) = \begin{pmatrix} 0 & p \\ -p & 0 \end{pmatrix} \quad \mbox{for } p\in \R, \\
&d=3: &&S(w) = \tr(w) I - w^T \quad \mbox{for } w\in \M, & & \Xi(p) = \begin{pmatrix} 0 & -p_3 & p_2 \\ p_3 & 0 & -p_1 \\ -p_2 & p_1 & 0 \end{pmatrix} \quad \mbox{for } p\in\R^3.
\end{aligned}
\end{align}
A direct calculation shows that for all $w \in \H$,
\begin{equation}\label{curl-div}
\quad \asym(\curl(w)) = - \dvr S(w),
\end{equation}
and for all $\t\in \M$ and $\xi\in \N$,
\begin{align}\label{asym-identity}
\inp[\t]{\xi} = \inp[\asym(\t)]{\Xi^{-1}(\xi)}.
\end{align}


\subsection{Mixed finite element method}
Here we present the MFE approximation of \eqref{weak-elast}, which is
the basis for the MSMFE methods.  Assume for simplicity that $\O$ is a
polygonal domain and let $\Tc_h$ be a shape-regular and quasi-uniform
finite element partition of $\O$ \cite{ciarlet2002finite} consisting
of triangles in two dimensions or tetrahedra in three dimensions with
maximum diameter $h$. For any element $E \in \Tc_h$ there exists a
bijection mapping $F_E: \Eh \to E$, where $\Eh$ is a reference
element. Denote the Jacobian matrix by $DF_E$ and let $J_E =
|\operatorname{det} (DF_E)|$.  In the case of triangular meshes, $\Eh$
is the reference right triangle with vertices $\rh_1 = (0,0^T)$,
$\rh_2 = (1,0)^T$ and $\rh_3 = (0,1)^T$. Let $\r_1$, $\r_2$ and $\r_3$
be the corresponding vertices of $E$, oriented counterclockwise. In
this case $F_E$ is a linear mapping of the form $F_E(\rh) = \r_1(1-\xh
- \yh) + \r_2\xh + \r_3\yh$ with a constant Jacobian matrix and
determinant given by $DF_E = [\r_{21}, \r_{31}]^T$ and $J_E = 2|E|$,
where $\r_{ij} = \r_i - \r_j$.  The mapping for tetrahedra is
described similarly.

\begin{figure}
\setlength{\unitlength}{1.0mm}
\begin{center}
\scalebox{.6}{
\begin{tikzpicture}
    [inner sep=1mm,
    dof/.style={circle,draw=black,fill=black,thick}]
    \draw [line width=1.5pt,arrows = {-Stealth[inset=0,width=7pt,length=7pt]}] (2,3) -- (2.6,3.36);
    \draw [line width=1.5pt,arrows = {-Stealth[inset=0,width=7pt,length=7pt]}] (2,3) -- (1.4,3.36);
    \draw [line width=1.5pt,arrows = {-Stealth[inset=0,width=7pt,length=7pt]}] (3.5,0.5) -- (4.1,0.86);
    \draw [line width=1.5pt,arrows = {-Stealth[inset=0,width=7pt,length=7pt]}] (3.5,0.5) -- (3.5,-0.2);
    \draw [line width=1.5pt,arrows = {-Stealth[inset=0,width=7pt,length=7pt]}] (0.5,0.5) -- (-0.1,0.86);
    \draw [line width=1.5pt,arrows = {-Stealth[inset=0,width=7pt,length=7pt]}] (0.5,0.5) -- (0.5,-0.2);
    \shadedraw [very thick,shading=axis] (0.5,0.5) -- (3.5,0.5) -- (2.0,3.0) -- cycle;
    \shadedraw [very thick,shading=axis] (4.5,0.5) -- (7.5,0.5) -- (6.0,3.0) -- cycle;
    \shadedraw [very thick,shading=axis] (8.5,0.5) -- (11.5,0.5) -- (10,3.0) -- cycle;
    \node at (6,1.425) [dof] {};
    \node at (10,1.425) [dof] {};
\end{tikzpicture}
}
\hspace{.5cm}
\scalebox{.6}{
\begin{tikzpicture}
    [inner sep=1mm,
    dof/.style={circle,draw=black,fill=black,thick}]
    \draw [line width=1.5pt,arrows = {-Stealth[inset=0,width=6pt,length=6pt]}] (1.0,1.0) -- (0.9,1.4);
    \draw [line width=1.5pt,arrows = {-Stealth[inset=0,width=6pt,length=6pt]}] (2.5,4.0) -- (2.4,4.4);
    \draw [line width=1.5pt,arrows = {-Stealth[inset=0,width=6pt,length=6pt]}] (4.0,1.5) -- (3.9,1.9);
    \draw [very thick,shading=axis,shading angle=45] (3.5,0.5) -- (4.0,1.5) -- (2.5,4.0);
    \draw [very thick,shading=axis,shading angle=45] (1.0,1.0) -- (3.5,0.5) -- (2.5,4.0) -- (1.0,1.0);
    \draw [dashed] (1.0,1.0) -- (4.0,1.5);
    \draw [line width=1.5pt,arrows = {-Stealth[inset=0,width=7pt,length=7pt]}] (2.5,4.0) -- (3.1,4.36);
    \draw [line width=1.5pt,arrows = {-Stealth[inset=0,width=7pt,length=7pt]}] (3.5,0.5) -- (4.1,0.86);
    \draw [line width=1.5pt,arrows = {-Stealth[inset=0,width=7pt,length=7pt]}] (4.0,1.5) -- (4.6,1.86);
    \draw [line width=1.5pt,arrows = {-Stealth[inset=0,width=6pt,length=6pt]}] (1.0,1.0) -- (0.6,1.26);
    \draw [line width=1.5pt,arrows = {-Stealth[inset=0,width=6pt,length=6pt]}] (2.5,4.0) -- (2.1,4.26);
    \draw [line width=1.5pt,arrows = {-Stealth[inset=0,width=6pt,length=6pt]}] (3.5,0.5) -- (3.1,0.76);
    \draw [line width=1.5pt,arrows = {-Stealth[inset=0,width=6pt,length=6pt]}] (1.0,1.0) -- (1.0,0.6);
    \draw [line width=1.5pt,arrows = {-Stealth[inset=0,width=6pt,length=6pt]}] (4.0,1.5) -- (4.0,1.1);
    \draw [line width=1.5pt,arrows = {-Stealth[inset=0,width=6pt,length=6pt]}] (3.5,0.5) -- (3.5,0.1);
    \draw [very thick,shading=axis,shading angle=45] (7.5,0.5) -- (8.0,1.5) -- (6.5,4.0);
    \draw [very thick,shading=axis,shading angle=-15] (5.0,1.0) -- (7.5,0.5) -- (6.5,4.0) -- (5.0,1.0);
    \draw [dashed] (5.0,1.0) -- (8.0,1.5);
    \draw [very thick,shading=axis,shading angle=45] (11.5,0.5) -- (12.0,1.5) -- (10.5,4.0);
    \draw [very thick,shading=axis,shading angle=-15] (9.0,1.0) -- (11.5,0.5) -- (10.5,4.0) -- (9.0,1.0);
    \draw [dashed] (9.0,1.0) -- (12.0,1.5);
    \node at (6.6,1.825) [dof] {};

    \node at (11.5,0.5) [dof] {};
    \node at (12,1.5) [dof] {};
    \node at (10.5,4.0) [dof] {};
    \node at (9,1.0) [dof] {};

\end{tikzpicture}
}
\end{center}
\caption{$\BDM_1\times\Pc_0\times\Pc_0$ on triangles (left) and 
$\BDM_1\times\Pc_0\times\Pc_1$ on tetrahedra (right).
}
\label{fig:elements}
\end{figure}

The finite element spaces $\X_h \times V_h \times \W_h^{k} 
\subset \X \times V \times \W$ are the triple
$ \left(\BDM_1\right)^d \times \left(\Pc_0\right)^d \times
\left(\Pc_k\right)^{d\times d, skew}$, where $k = 0,1$. Note that 
for $k=1$ the space $\W_h^1$ contains continuous piecewise linears.
On the reference triangle these spaces are defined as
\begin{align}
    \hat{\X}(\Eh) = \Pc_1(\Eh)^2 \times \Pc_1(\Eh)^2 = 
\begin{pmatrix} \alpha_1 \xh + \beta_1 \yh + \gamma_1  
& \alpha_2 \xh + \beta_2 \yh + \gamma_2 \\ 
\alpha_3 \xh + \beta_3 \yh + \gamma_3  
& \alpha_4 \xh + \beta_4 \yh + \gamma_4 \end{pmatrix}, \nonumber \\
    \hat{V}(\Eh) = \Pc_0(\Eh) \times \Pc_0(\Eh),  \quad
    \hat{\W}^k(\Eh) = \Xi(p) ,\, p\in \Pc_k(\Eh) \mbox{ for } k = 0,1.  
\label{spaces-with-j}
\end{align}
The definition on tetrahedra is similar, except that 
$\hat{\W}^k(\Eh) = \Xi(p)$, $p\in \Pc_k(\Eh)^d$. These spaces satisfy
$$
\dvr \Xh(\Eh) = \Vh(\Eh) \quad \mbox{and} \quad
\forall \th \in \Xh (\Eh),\,\eh \in \Eh, \quad \th\, n_{\eh} \in \Pc_1(\eh)^d.
$$
It is known \cite{brezzi1985two,brezzi1991mixed} that the degrees of
freedom for $\BDM_1$ can be chosen as the values of normal fluxes
at any two points on each edge $\eh$ if $\Eh$ is a reference triangle,
or any three points one each face $\eh$ if $\Eh$ is a reference
tetrahedron. This naturally extends to normal stresses in the case of
$(\BDM_1)^d$. Here we choose these points to be at the
vertices of $\eh$, see Figure~\ref{fig:elements}. This choice is motivated by the
use of quadrature rule described in the next section.  The spaces on
any element $E \in \Tc_h$ are defined via the transformations
\begin{align*}
 \t \overset{\Pc}\leftrightarrow \hat{\t} : \t^T = \frac{1}{J_E} DF_E \hat{\t}^T \circ F_E^{-1}, 
\quad
    v \leftrightarrow \hat{v} : v = \hat{v} \circ F_E^{-1}, \quad
    \xi \leftrightarrow \hat{\xi} : \xi = \hat{\xi} \circ F_E^{-1},
\end{align*}
where $\t \in \X$, $v \in V$, and $\xi \in \W$.  The stress tensor is transformed
by the Piola transformation applied row-wise. It
preserves the normal components of the stress tensor
on facets, and it satisfies 
\begin{equation}
    (\dvr \t, v)_E = (\dvr \hat{\t}, \hat{v})_{\Eh} \quad \text{and} \quad \langle \t\, n_e, v \rangle _e = \langle \hat{\t}\, \hat{n}_{\eh}, \hat{v} \rangle _{\eh}. \label{prop-1}
\end{equation}
The spaces on $\Tc_h$ are defined by
\begin{align} \label{mixed-spaces}
    \X_h &= \{\t \in \X: \t|_E \overset{\Pc}\leftrightarrow \hat{\t},\: \hat{\t} \in \hat{\X}(\Eh) \quad \forall E\in\mathcal{T}_h\}, \nonumber \\
    V_h &= \{v \in V: v|_E \leftrightarrow \hat{v},\: \hat{v} \in \hat{V}(\Eh) \quad \forall E\in\mathcal{T}_h\}, \\
{\color{black}
    \W_h^0} & {\color{black} = \{\xi \in \W: \xi|_E \leftrightarrow \hat{\xi},
\: \hat{\xi} \in \hat{\W}^0(\Eh) \quad \forall E\in\mathcal{T}_h\}}, \nonumber \\
{\color{black}    \W_h^1} & {\color{black} = \{\xi \in \mathcal{C}(\O,\N) \subset \W: \xi|_E \leftrightarrow \hat{\xi},\: \hat{\xi} \in \hat{\W}^1(\Eh) \quad \forall E\in\mathcal{T}_h\}.} \nonumber
\end{align}
Note that $\W_h^1 \subset H^1(\Omega)$, since it contains continuous
piecewise $\Pc_1$ functions. The mixed finite element approximation of \eqref{weak-elast} is:
find $(\sigma_h, u_h, \g_h) \in \X_h \times V_h \times \W_h^k$ such that
\begin{align}
	(A\sigma_h,\tau) + (u_h,\dvr{\tau}) + (\g_h, \tau) &= \gnp[g]{\tau\,n}_{\Gd}, &\tau &\in \X_h, \\
	(\dvr \sigma_h,v) &= (f,v), &v &\in V_h, \\
	(\sigma_h,\xi) &= 0, &\xi &\in \W_h^k.
\end{align}
The method has a unique solution and it is first order accurate for
all variables in their corresponding norms with both choices of
rotation elements, see \cite{arnold2007mixed} for $k=0$ and
\cite{cockburn2010new} for $k=1$. A drawback is that the resulting
algebraic system is of a saddle point type and couples all three
variables. We next present a quadrature rule that
allows for local eliminations of the stress in the case of
$k=0$, resulting in a cell-centered displacement-rotation system in
the case $k=0$.  In the case $k=1$, a further elimination of the
rotation can be performed, which leads to a displacement-only 
cell-centered system.

\subsection{A quadrature rule}

Let $\varphi$ and $\psi$ be continuous functions on $\Omega$. We
denote by $(\varphi,\psi)_Q$ the application of the element-wise vertex
quadrature rule for computing $(\varphi,\psi)$. In particular, for
$\chi,\,\t \in \X_h$, we have
$$ 
(A\chi,\tau)_Q = \sum_{E \in \Tc_h} (A\chi,\tau)_{Q,E} = 
\sum_{E \in \Tc_h} \frac{|E|}{s}\sum_{i=1}^{s} A\chi(\r_i):\tau(\r_i),
$$ 
where $s=3$ on triangles and $s=4$ on tetrahedra. The vertex tensor
$\chi(\r_i)$ is uniquely determined by its normal components
$(\chi\,n_{ij})(\r_i)$, $j = 1,\ldots,d$, where $n_{ij}$ are the
outward unit normal vectors on the two edges (three faces) that share
$\r_i$, see Figure~\ref{fig:elements}. More precisely, $\chi(\r_i) =
\sum_{j=1}^{d} (\chi\, n_{ij})(\r_i)n_{ij}^T$.  Since the basis
functions associated with a vertex are zero at all other vertices, the
quadrature rule decouples $(\chi\,n_{ij})(\r_i)$ from the rest of the
degrees of freedom, which allows for local stress elimination.

We also employ the quadrature rule for the stress-rotation bilinear form 
in the case of linear rotations. For $\tau = \X_h,\, \xi \in \W^1_h$ we have
\begin{equation*}
(\tau,\xi)_{Q,E} = \frac{|E|}{s}\sum_{i=1}^{s} \tau(\r_i):\xi(\r_i).
\end{equation*}
Again, only degrees of freedom associated with a vertex are coupled, which
allows for further elimination of the rotation. 

For $\chi,\, \t \in \X_h$ and $\xi \in \W_h^1$ denote the element 
quadrature errors by
\begin{align} \label{quad-err-def}
    \tet_E(A\chi,\t) = (A\chi,\tau)_E - (A\chi,\tau)_{Q,E}, \quad
    \del_E(\t,\xi) = (\t,\xi)_E - (\t,\xi)_{Q,E}.
\end{align}
and define the global quadrature errors by 
$\theta(A\chi,\tau)|_E = \tet_E(A\chi,\tau)$, $\del(\t,\xi)|_E = \del_E(\t,\xi)$. 

\begin{lemma} \label{0-q-err-const}
If $\chi \in \X_h(E)$ and $\xi \in \W^1_h(E)$, then for 
all constant tensors $\tau_0$ and for all skew-symmetric constant tensors 
$\z_0$,
$$ \tet_E(\chi,\tau_0) = 0, \quad \del_E(\chi,\z_0) = 0, \quad
\del_E(\tau_0,\xi) = 0.
$$
\end{lemma}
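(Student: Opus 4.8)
\emph{Proof proposal.} The plan is to reduce all three identities to one elementary fact: the element-wise vertex quadrature rule on a simplex is exact for polynomials of degree at most one, while each of the three integrands in question is such a polynomial.

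First I would record the exactness statement. On a simplex $E$ with vertices $\r_1,\dots,\r_s$, where $s=d+1$, let $\lambda_1,\dots,\lambda_s$ be the barycentric coordinate functions; these are affine, satisfy $\lambda_i(\r_j)=\delta_{ij}$, and $\int_E\lambda_i\,dx=|E|/(d+1)=|E|/s$. Any $\phi\in\Pc_1(E)$ can be written $\phi=\sum_{i=1}^s\phi(\r_i)\lambda_i$, hence $\int_E\phi\,dx=\frac{|E|}{s}\sum_{i=1}^s\phi(\r_i)$, which is precisely the vertex rule $(\,\cdot\,,\,\cdot\,)_{Q,E}$ applied entrywise. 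Thus $(\varphi,\psi)_E=(\varphi,\psi)_{Q,E}$ whenever $\varphi:\psi\in\Pc_1(E)$.

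Second, I would check that the relevant finite element functions are affine on each simplex $E$. Since $\Tc_h$ is simplicial, $F_E$ is an affine bijection, so $F_E^{-1}$ is affine as well. For $\chi\in\X_h(E)$ we have $\hat\chi\in\hat\X(\Eh)$ with every entry in $\Pc_1(\Eh)$, and the Piola transform $\chi^T=\frac{1}{J_E}DF_E\,\hat\chi^T\circ F_E^{-1}$ multiplies $\hat\chi^T\circ F_E^{-1}$ (entrywise affine) by the constant matrix $\frac{1}{J_E}DF_E$, so every entry of $\chi$ lies in $\Pc_1(E)$. Likewise, for $\xi\in\W^1_h(E)$ we have $\hat\xi=\Xi(p)$ with $p$ a (vector of) linear polynomial(s), and $\xi=\hat\xi\circ F_E^{-1}$, so every entry of $\xi$ lies in $\Pc_1(E)$ as well (I would double-check this in the tetrahedral case, where $\hat\W^1(\Eh)=\Xi(p)$ with $p\in\Pc_1(\Eh)^d$).

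Finally I would combine the two pieces. For a constant tensor $\tau_0$ the integrand of $(\chi,\tau_0)_E$ is $\chi:\tau_0=\sum_{m,n}\chi_{mn}(\tau_0)_{mn}$, a linear combination of the entries of $\chi$, hence in $\Pc_1(E)$; exactness of the vertex rule gives $\tet_E(\chi,\tau_0)=0$. Choosing $\tau_0=\z_0$ skew-symmetric is the special case $\del_E(\chi,\z_0)=\tet_E(\chi,\z_0)=0$. For the last identity, the integrand of $(\tau_0,\xi)_E$ is $\tau_0:\xi$, a linear combination of the entries of $\xi$, hence in $\Pc_1(E)$, so $\del_E(\tau_0,\xi)=0$. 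There is no genuine obstacle here; the only point needing care is that degree preservation under the Piola transform relies on $F_E$ being affine and that the vertex rule is $\Pc_1$-exact only on simplices — both of which are exactly where simpliciality of $\Tc_h$ enters.
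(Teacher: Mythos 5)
Your proposal is correct and follows essentially the same route as the paper's proof: both arguments rest on the $\Pc_1$-exactness of the vertex quadrature rule on a simplex together with the observation that each integrand is affine on $E$ (the paper simply reduces to a single nonzero component of $\tau_0$ and states the exactness without spelling out the barycentric-coordinate and Piola-transform details you supply). Your extra care about degree preservation under the Piola map on affine elements is a useful elaboration but not a different method.
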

\begin{proof}
It is enough to consider $\tau_0$ such that it has only one nonzero
component, say, $(\tau_0)_{1,1} = 1$; the arguments for other cases
are similar. Since the quadrature rule $(f)_{Q,E} = \frac{|E|}{s}
\sum_{i=1}^s f(\r_i)$ is exact for linear functions, we have
	\begin{equation*}
		\inp[\chi]{\tau_0}_{Q,E} = \frac{|E|}{s}\sum_{i=1}^{s}(\chi)_{1,1}(\r_i) = \int_E \chi : \tau_0\, dx.
	\end{equation*}
	The same reasoning applies for the other statements.
\end{proof}

\begin{lemma}\label{quad-inner-prod}
The bilinear form $\inp[A\tau]{\chi}_Q$ is an inner product on $\X_h$
and $\inp[A\tau]{\tau}_Q^{1/2}$ is a norm in $\X_h$ equivalent to
$\|\cdot \|$, i.e., there exist constants $0 < \alpha_0 \le \alpha_1$
independent of $h$ such that
\begin{equation}\label{norm-equiv}
\alpha_0 \|\tau\|^2 \le \inp[A\tau]{\tau}_Q \le \alpha_1 \|\tau\|^2 \quad 
\forall \tau\in\X_h.
\end{equation}
Furthermore, $(\xi,\xi)^{1/2}_Q$ is a norm in $\W^1_h$ equivalent to
$\|\cdot\|$, and $\forall \, \tau \in \X_h$, $\xi \in \W^1_h$,
$(\tau,\xi)_Q \le C \|\tau\|\|\xi\|$.

\end{lemma}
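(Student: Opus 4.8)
### Proof proposal

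The plan is to prove the norm equivalence on the reference element and then transfer it to the physical elements by the Piola transformation, exploiting quasi-uniformity. First I would observe that, since the quadrature rule is a positive combination of point evaluations, $\inp[A\tau]{\tau}_Q \ge 0$ always, so to show it is an inner product it suffices to show definiteness: if $\inp[A\tau]{\tau}_Q = 0$ then $A\tau(\r_i):\tau(\r_i) = 0$ for every element $E$ and every vertex $\r_i$ of $E$, and since $A$ is uniformly positive definite on $\M$ this forces $\tau(\r_i) = 0$ at all vertices; because a $\BDM_1$ tensor on a simplex is determined by its normal components at the edge/face vertices (as recalled in the excerpt, $\chi(\r_i) = \sum_j (\chi\,n_{ij})(\r_i)n_{ij}^T$ and the basis functions vanish at other vertices), vanishing at all vertices forces $\tau|_E = 0$, hence $\tau = 0$. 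The bilinear symmetry and the inner-product axioms are then immediate, and $\inp[A\tau]{\tau}_Q^{1/2}$ is a genuine norm.

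For the equivalence \eqref{norm-equiv}, I would work on the reference element $\Eh$. On the finite-dimensional space $\Xh(\Eh)$, both $\tauh \mapsto \|\tauh\|_{\Eh}$ and $\tauh \mapsto \big(\sum_{i=1}^s \Ah(\xih_i)\,\tauh(\rh_i):\tauh(\rh_i)\big)^{1/2}$ are norms — the latter by the definiteness argument above together with boundedness and positive-definiteness of the (fixed) reference compliance tensor — and on a finite-dimensional space all norms are equivalent, giving constants $\hat\alpha_0, \hat\alpha_1$ depending only on $\Eh$ and on the bounds for $A$. Then I would push forward to a general $E$ using the Piola transformation $\tau^T = \frac{1}{J_E} DF_E \hat\tau^T \circ F_E^{-1}$: one has $\|\tau\|_E^2 \eqsim |E|^{-1}\|DF_E\hat\tau\|_{\Eh}^2$ up to Jacobian factors, and $(A\tau,\tau)_{Q,E} = \frac{|E|}{s}\sum_i A\tau(\r_i):\tau(\r_i) = \frac{1}{s J_E^2}\sum_i (DF_E \hat\tau^T(\rh_i)) A(\r_i) (DF_E\hat\tau^T(\rh_i))^T$ (up to transpose bookkeeping). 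Shape-regularity and quasi-uniformity give $\|DF_E\| \lesssim h \lesssim J_E^{1/d}\cdot(\text{bounded ratio})$ and $J_E \eqsim h^d \eqsim |E|$, so all the Jacobian and $|E|$ factors are bounded above and below independently of $E$ and $h$; combined with the uniform bounds on $A$ this yields the global constants $\alpha_0 \le \alpha_1$.

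The statements for the rotation bilinear form are analogous and slightly easier: $(\xi,\xi)_{Q,E} = \frac{|E|}{s}\sum_i \xi(\r_i):\xi(\r_i) \ge 0$, with equality forcing $\xih$ to vanish at all vertices of $\Eh$, hence $\xih \equiv 0$ since $\hat\W^1(\Eh)$ consists of (componentwise) linear skew tensors determined by their vertex values; the finite-dimensional norm-equivalence on $\Eh$ plus the scaling $\xi = \hat\xi\circ F_E^{-1}$, $\|\xi\|_E^2 \eqsim |E|\,\|\hat\xi\|_{\Eh}^2$, transfers this to $\W^1_h$. For the mixed bound $(\tau,\xi)_Q \le C\|\tau\|\|\xi\|$, I would apply Cauchy–Schwarz to the (positive semidefinite) quadrature pairing: $(\tau,\xi)_Q \le (\tau,\tau)_Q^{1/2}(\xi,\xi)_Q^{1/2}$ — using that $(\tau,\tau)_Q = \frac{|E|}{s}\sum_i \tau(\r_i):\tau(\r_i)$ is controlled by $(A\tau,\tau)_Q$ via the uniform lower bound on $A^{-1}$, i.e. $\tau:\tau \le C\, A\tau:\tau$ — and then invoke the two norm equivalences just established.

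I do not expect a genuine obstacle here; the only point requiring care is the bookkeeping of Jacobian and transpose factors in the Piola map when verifying that $(A\tau,\tau)_{Q,E}$ scales like $\|\tau\|_E^2$, and making sure the constants coming from shape-regularity/quasi-uniformity are tracked so that $\alpha_0,\alpha_1$ are $h$-independent. One subtlety worth a sentence is that on $E$ the compliance tensor $A(\r_i)$ varies with $E$, but since $A$ is uniformly bounded and uniformly positive definite on $\M$ (an assumption stated in the model-problem section), this variation only affects the constants through those uniform bounds and not through $h$.
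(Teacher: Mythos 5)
Your proof is correct and follows essentially the same route as the paper's: a finite-dimensional norm equivalence between the vertex-quadrature form and the $L^2$ norm, transferred to each element by a scaling argument under shape regularity, with the uniform bounds on $A$ absorbed into the constants. The only organizational difference is that you carry out the Piola scaling explicitly on the reference element (including the upper bound in \eqref{norm-equiv}, which the paper delegates to a citation of the MFMFE literature), whereas the paper works directly with the physical basis functions and the Euclidean norm of the vertex degrees of freedom.
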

\begin{proof}
The properties of the operator $A$ imply that there exist positive
constants $a_0$ and $a_1$ such that for all $\tau \in \M$, 
$a_0 \, \tau:\tau \le A\tau:\tau \le a_1 \, \tau:\tau$.
Let $\tau = \sum_{i=1}^s \sum_{j=1}^d
\tau_{ij}\chi_{ij} $ on an element $E$, where $\chi_{ij}$ are basis
functions as shown in Figure \ref{fig:elements}. We have 
\begin{align*}
\inp[A\tau]{\tau}_{Q,E} = \frac{|E|}{s} \sum_{i=1}^s A\tau(\r_i) : \tau(\r_i) 
\ge a_0\frac{|E|}{s} \sum_{i=1}^s \tau(\r_i) : \tau(\r_i) 
\ge C |E| \sum_{i=1}^s \sum_{j=1}^d \tau_{ij}^2.
\end{align*}
On the other hand,
\begin{equation*}
\| \tau \|^2_E = \inp[\sum_{i=1}^s \sum_{j=1}^d \tau_{ij}\chi_{ij}]{\sum_{k=1}^s \sum_{l=1}^d \tau_{kl}\chi_{kl}}_E \le C|E| \sum_{i=1}^s \sum_{j=1}^d \tau_{ij}^2,
\end{equation*}
which implies $ \alpha_0\| \tau \|^2 \le \inp[A\tau]{\tau}_Q$.  Since
$\inp[A\tau]{\chi}_Q$ is symmetric and linear, it is an inner product and
$\inp[A\tau]{\tau}_Q^{1/2}$ is a norm on $\X_h$. The upper bound 
in \eqref{norm-equiv} follows from a scaling argument, see 
\cite[Corollary 2.5]{wheeler2006multipoint}. The proofs of the other two 
statements are similar.
\end{proof}
%


\section{The multipoint stress mixed finite element method with constant rotations
(MSMFE-0)}
In the first method, referred to as MSMFE-0, we use the piecewise constant 
space $\W_h^0$ for rotations and apply the quadrature rule only to the
stress bilinear form.  The method is:
find $\sigma_h \in
\X_h,\, u_h \in V_h$ and $\g_h \in \W_h^0$ such that
\begin{align}
(A\sigma_h,\tau)_Q + (u_h,\dvr{\tau}) + (\g_h, \tau) &= \gnp[g]{\tau\,n}_{\Gd}, &\tau &\in \X_h, \label{msmfe-0-1} \\
(\dvr \sigma_h,v) &= (f,v), &v &\in V_h, \label{msmfe-0-2} \\
(\sigma_h,\xi) &= 0, &\xi &\in \W_h^0. \label{msmfe-0-3} 
\end{align}

\begin{theorem} \label{msmfe-stability-0}
The method \eqref{msmfe-0-1}--\eqref{msmfe-0-3} has a unique solution
$(\s_h,u_h,\g_h)$.
\end{theorem}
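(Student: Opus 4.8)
The plan is to use that \eqref{msmfe-0-1}--\eqref{msmfe-0-3} is a square linear algebraic system, so that existence of a solution is equivalent to uniqueness; hence it suffices to show that the homogeneous problem --- obtained by setting $g=0$ and $f=0$ --- admits only the trivial solution $(\s_h,u_h,\g_h)=(0,0,0)$.

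First I would test the homogeneous equations with $\t=\s_h$ in \eqref{msmfe-0-1}, $v=u_h$ in \eqref{msmfe-0-2}, and $\xi=\g_h$ in \eqref{msmfe-0-3}, and subtract the latter two identities from the first. The term $(u_h,\dvr\s_h)$ cancels with $(\dvr\s_h,u_h)$ and the term $(\g_h,\s_h)$ cancels with $(\s_h,\g_h)$, leaving $(A\s_h,\s_h)_Q=0$. By Lemma~\ref{quad-inner-prod}, $\inp[A\cdot]{\cdot}_Q$ is an inner product on $\X_h$ whose induced norm is equivalent to $\|\cdot\|$; in particular $\alpha_0\|\s_h\|^2\le(A\s_h,\s_h)_Q=0$, so $\s_h=0$.

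With $\s_h=0$, the homogeneous version of \eqref{msmfe-0-1} reduces to
$$
(u_h,\dvr\t)+(\g_h,\t)=0\qquad\forall\,\t\in\X_h.
$$
To conclude $u_h=0$ and $\g_h=0$ I would invoke the Babu\v{s}ka--Brezzi inf-sup condition for the AFW triple $\BDM_1\times\Pc_0\times\Pc_0$ established in \cite{arnold2007mixed}: there is a constant $\beta>0$, independent of $h$, such that for every $(v,\xi)\in V_h\times\W_h^0$ there exists $\t\in\X_h$ with $(v,\dvr\t)+(\xi,\t)\ge\beta(\|v\|+\|\xi\|)\|\t\|_{\dvr}$. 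Since this condition involves only the exact bilinear forms $(\cdot,\dvr\cdot)$ and $(\cdot,\cdot)$, and not the quadrature, it transfers verbatim to the present setting. Applying it with $(v,\xi)=(u_h,\g_h)$ forces $\|u_h\|+\|\g_h\|=0$, which completes the argument.

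The only point requiring genuine care --- and the reason the MSMFE-0 analysis is not entirely trivial --- is that replacing $(A\s_h,\s_h)$ by the quadrature form $(A\s_h,\s_h)_Q$ must not destroy positive definiteness on $\X_h$; this is precisely what Lemma~\ref{quad-inner-prod} provides, via the vertex representation $\chi(\r_i)=\sum_j(\chi\,n_{ij})(\r_i)n_{ij}^T$ and the local non-degeneracy of the $\BDM_1$ basis at the vertices. Everything else is the classical well-posedness argument for the three-field elasticity formulation with weakly imposed stress symmetry.
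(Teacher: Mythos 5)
Your proposal is correct and rests on exactly the same two ingredients as the paper's proof: the positive definiteness of $(A\cdot,\cdot)_Q$ on $\X_h$ from Lemma~\ref{quad-inner-prod} and the inf-sup condition for the $\BDM_1\times\Pc_0\times\Pc_0$ triple from \cite{arnold2007mixed}, which indeed involves no quadrature in the MSMFE-0 method. The only difference is presentational: you unpack the Babu\v{s}ka--Brezzi machinery into an explicit square-system uniqueness argument, whereas the paper simply verifies conditions \ref{S1} and \ref{S2} and cites the abstract theory.
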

\begin{proof}
Using the classical stability theory of mixed finite element
methods \cite{brezzi1991mixed}, the solvability of 
\eqref{msmfe-0-1}--\eqref{msmfe-0-3} follows from 
the Babu\v{s}ka-Brezzi conditions:
 \begin{enumerate} [label={\bf(S\arabic*)},align=left]
\item \label{S1} There exists $c_1>0$ such that 
for all $\tau \in \X_h$ satisfying 
$\inp[\dvr \tau]{v} + \inp[\tau]{\xi} = 0$ for all $(v,\xi) \in V_h\times \W_h^0$,
$$ 
c_1\| \tau \|_{\dvrg}^2 \le \inp[A\tau]{\tau}_{Q}, 
$$
\item \label{S2} There exists $c_2 > 0$ such that 
$$ \inf_{0\neq(v,\xi)\in V_h\times \W_h^0 } \sup_{0\neq \tau\in\X_h} \frac{\inp[\dvr\tau]{v}+\inp[\tau]{\xi}}{\| \tau \|_{\dvrg} \left( \|v\| + \|\xi\| \right)}  \ge c_2.$$
 \end{enumerate} 
Condition \ref{S1} is satisfied due to Lemma~\ref{quad-inner-prod},
while condition \ref{S2} is shown in \cite{arnold2007mixed,brezzi2008mixed}.
\end{proof}


\subsection{Reduction to a cell-centered displacement-rotation system}

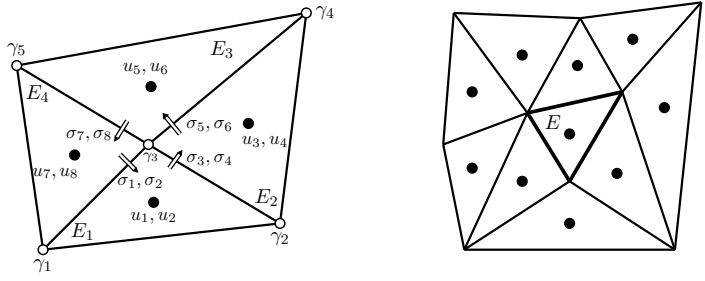
\begin{figure}
\begin{center}
\scalebox{.7}{
\begin{tikzpicture}
    [inner sep=0.65mm,
    displ/.style={circle,draw=black,fill=black,thick},
    rotat/.style={circle,draw=black,fill=white,thick}]
    \coordinate (p1) at (0,0);
    \coordinate (p2) at (4.5,0.5);
    \coordinate (p3) at (2.0,2.0); 
    \coordinate (p4) at (5.0,4.5);
    \coordinate (p5) at (-0.5,3.5);

    \draw [very thick] (p1) -- (p2);
    \draw [very thick] (p2) -- (p4);
    \draw [very thick] (p4) -- (p5);
    \draw [very thick] (p1) -- (p3);
    \draw [very thick] (p2) -- (p3);
    \draw [very thick] (p4) -- (p3);
    \draw [very thick] (p5) -- (p3); 
    \draw [very thick] (p5) -- (p1); 

    \draw [line width=0.7pt,arrows = {-Stealth[inset=0,width=4pt,length=3pt]}, double distance = 1.3pt] (2.6,2.2) -- (2.27,2.6);
    \draw [line width=0.7pt,arrows = {-Stealth[inset=0,width=3pt,length=3pt]}, double distance = 1.3pt] (1.6,2.45) -- (1.35,2.05);
    \draw [line width=0.7pt,arrows = {-Stealth[inset=0,width=3pt,length=3pt]}, double distance = 1.3pt] (1.46,1.81) -- (1.81,1.46);
    \draw [line width=0.7pt,arrows = {-Stealth[inset=0,width=3pt,length=3pt]}, double distance = 1.3pt] (2.4,1.55) -- (2.65,1.9);


    \node at (2.1,0.9) [displ] {};
    \node at (3.9,2.4) [displ] {};
    \node at (2.05,3.10) [displ] {};
    \node at (0.6,1.8) [displ] {};

    \node at (p1) [rotat] {};
    \node at (p2) [rotat] {};
    \node at (p3) [rotat] {};
    \node at (p4) [rotat] {};
    \node at (p5) [rotat] {};

    \draw (p1) node[below=3pt] {$\g_1$};
    \draw (p2) node[below=3pt] {$\g_2$};
    \draw (2.05,2.02) node[below=3pt] {\scriptsize$\g_3$};
    \draw (p4) node[right=3pt] {$\g_4$};
    \draw (p5) node[above=3pt] {$\g_5$};
    \draw (2.1,0.9) node[below=3pt] {\small$u_1,u_2$};
    \draw (3.6,2.05) node[right=3pt] {\small$u_3,u_4$};
    \draw (1.95,3.15) node[above=3pt] {\small$u_5,u_6$};
    \draw (0.85,1.45) node[left=3pt] {\small$u_7,u_8$};
    \draw (1.85,1.55) node[below=3pt] {\small$\s_1,\s_2$};
    \draw (2.55,1.70) node[right=3pt] {\small$\s_3,\s_4$};
    \draw (2.55,2.35) node[right=3pt] {\small$\s_5,\s_6$};
    \draw (1.49,2.17) node[left=3pt] {\small$\s_7,\s_8$};
    \draw (0.35,0.35) node[right=3pt] {$E_1$};
    \draw (4.25,0.65) node[above=3pt] {$E_2$};
    \draw (3.8,3.8) node[left=3pt] {$E_3$};
    \draw (-0.1,3.3) node[below=3pt] {$E_4$};
    \coordinate (q1) at (8,0);
    \coordinate (q2) at (12,0.0);
    \coordinate (q3) at (12.5,4.7); 
    \coordinate (q4) at (10.2,4.4);
    \coordinate (q5) at (7.8,4.5);
    \coordinate (q6) at (7.6,2.0);
    \coordinate (q7) at (10,1.3);
    \coordinate (q8) at (9.2,2.6);
    \coordinate (q9) at (11,3.0);

    \draw [very thick] (q1) -- (q2);
    \draw [very thick] (q2) -- (q3);
    \draw [very thick] (q3) -- (q4);
    \draw [very thick] (q4) -- (q5);
    \draw [very thick] (q5) -- (q6);    
    \draw [very thick] (q6) -- (q1); 
    \draw [line width=2pt] (q7) -- (q8);   
    \draw [line width=2pt] (q8) -- (q9);    
    \draw [line width=2pt] (q7) -- (q9);
    \draw [very thick] (q1) -- (q7);
    \draw [very thick] (q1) -- (q8);    
    \draw [very thick] (q2) -- (q7);    
    \draw [very thick] (q2) -- (q9);
    \draw [very thick] (q3) -- (q9);
    \draw [very thick] (q4) -- (q9);
    \draw [very thick] (q4) -- (q8);
    \draw [very thick] (q5) -- (q8);
    \draw [very thick] (q6) -- (q8);
    \node at (10,0.5) [displ] {};
    \node at (10.9,1.45) [displ] {};
    \node at (11.8,2.7) [displ] {};
    \node at (11.2,4.0) [displ] {};
    \node at (10.15,3.5) [displ] {};
    \node at (9.1,3.7) [displ] {};
    \node at (8.15,3.0) [displ] {};
    \node at (8.15,1.55) [displ] {};
    \node at (9.1,1.3) [displ] {};
    \node at (10,2.2) [displ] {};
    \draw (9.35,2.43) node[right=3pt] {$E$};
\end{tikzpicture}
}
\end{center}
\caption{Finite elements sharing a vertex (left) and displacement stencil (right)}
\label{fig:elimination}
\end{figure}

{\color{black}
The algebraic system that arises from 
\eqref{msmfe-0-1}--\eqref{msmfe-0-3} is of the form
\begin{equation}\label{sp-matrix}
    \begin{pmatrix} \As & \Au^T & \Ag^T \\ - \Au & 0   & 0 \\ - \Ag & 0   & 0 \end{pmatrix} 
    \begin{pmatrix} \sigma \\ u \\ \g \end{pmatrix} = 
    \begin{pmatrix} g \\ - f \\ 0 \end{pmatrix},
\end{equation}
}
{\color{black}
where $(\As)_{ij} = (A\tau_j,\tau_i)_Q$, $(\Au)_{ij} = (\dvr\tau_j,
v_i)$, and $(\Ag)_{ij} = (\tau_j,\xi_i)$. In the standard MFE
formulation without quadrature rule, all stress degrees of freedom are
coupled in the matrix $\As$ and it is not possible to eliminate the
stress with local computations, thus the entire saddle point problem
needs to be solved. In contrast, the MSMFE-0 method is designed to allow for 
local and inexpensive stress elimination, as shown below. }
{\color{black}
\begin{lemma}
The matrix $\As$ is block-diagonal with symmetric and positive definite 
blocks associated with the mesh vertices.
\end{lemma}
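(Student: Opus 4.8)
The plan is to read off the block structure directly from the support property of the basis noted just above the statement, namely that a basis function $\tau_i$ of $\X_h$ associated with a given mesh vertex vanishes at every other vertex of every element. First I would group the stress degrees of freedom by the mesh vertex they are associated with: for a vertex $\r$ lying on $m$ facets, its group consists of the $dm$ normal stress components $(\tau\,n)(\r)$ on those facets (discarding any component annihilated by the condition $\tau n = 0$ on $\Gn$). The assertion is then that $\As$ is block diagonal for this grouping, with one block $\As^{\r}$ per mesh vertex $\r$.

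To verify this, recall that
$$(\As)_{ij}=(A\tau_j,\tau_i)_Q=\sum_{E\in\Tc_h}\frac{|E|}{s}\sum_{\ell=1}^{s}A\tau_j(\r_\ell):\tau_i(\r_\ell),$$
where $\r_\ell$ runs over the vertices of $E$. A summand $A\tau_j(\r_\ell):\tau_i(\r_\ell)$ vanishes unless both $\tau_i(\r_\ell)\ne 0$ and $\tau_j(\r_\ell)\ne 0$. But if $\tau_i$ is associated with a vertex $\r^{*}$, then for any $\r\ne\r^{*}$ the normal components of $\tau_i$ at $\r$ in every element are zero, so by the identity $\tau_i(\r)=\sum_{k=1}^{d}(\tau_i\,n_{\r k})(\r)\,n_{\r k}^{T}$ we get $\tau_i(\r)=0$. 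Hence a summand can be nonzero only when $\tau_i$ and $\tau_j$ are both associated with the vertex $\r_\ell$, which forces $(\As)_{ij}=0$ across distinct vertex groups. Moreover $\As^{\r}$ is exactly the assembly, over the elements $E\ni\r$, of the local forms $\frac{|E|}{s}A(\cdot)(\r):(\cdot)(\r)$ restricted to the normal components at $\r$, so its computation is purely local to the patch of $\r$.

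Symmetry of each block is immediate from the symmetry of $A$: $A\tau_j(\r):\tau_i(\r)=\tau_j(\r):A\tau_i(\r)=A\tau_i(\r):\tau_j(\r)$, so $(\As)_{ij}=(\As)_{ji}$. For positive definiteness I would appeal to Lemma~\ref{quad-inner-prod}: $(A\cdot,\cdot)_Q$ is an inner product on $\X_h$, hence its Gram matrix on any subspace spanned by a subset of the (linearly independent) basis functions is symmetric positive definite; taking the subset of basis functions associated with a fixed vertex $\r$ gives precisely $\As^{\r}$. Equivalently, for a nonzero coefficient vector $c$ supported on the DOFs at $\r$ and $\tau=\sum_k c_k\tau_k\ne 0$, one has $c^{T}\As^{\r}c=(A\tau,\tau)_Q\ge\alpha_0\|\tau\|^{2}>0$ by \eqref{norm-equiv}.

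I do not expect a genuine obstacle; the only point requiring a little care is the bijectivity used above, that the map sending the $d$ normal components $(\tau\,n_{\r k})(\r)$, $k=1,\dots,d$, of a local $(\BDM_1)^{d}$ tensor to its vertex value $\tau(\r)$ is invertible — this rests on the $d$ facet normals at a vertex of a simplex being linearly independent, which is already invoked in the paragraph defining the quadrature rule. The same fact furnishes an alternative, element-local proof of definiteness: on each $E\ni\r$ the contribution $\frac{|E|}{s}A\tau(\r):\tau(\r)$ is a positive definite quadratic form in the local normal components at $\r$, and a sum of such forms over all $E\ni\r$, in which every DOF of $\r$ appears, is again positive definite.
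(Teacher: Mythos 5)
Your proposal is correct and follows essentially the same route as the paper: block-diagonality from the fact that the vertex quadrature rule only couples basis functions associated with the same vertex, and symmetry/positive definiteness of each block from Lemma~\ref{quad-inner-prod} applied to the span of the basis functions at that vertex. You spell out the vanishing of off-block entries and the boundary-vertex case a bit more explicitly than the paper does, but the substance is identical.
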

\begin{proof} 
Let us consider any interior vertex $\r$ and suppose that it is shared
by $k$ elements $E_1,\dots,E_k$ as shown in Figure
\ref{fig:elimination}. Let $e_1,\dots,e_k$ be the facets that
share the vertex $\r$ and let $\t_1,\dots,\t_{dk}$, be the stress
basis functions on these facets associated with the
vertex. Denote the corresponding values of the normal components of
$\s_h$ by $\s_1,\dots,\s_{dk}$. Note that for the sake of clarity the
normal stresses are drawn at a distance from the vertex.
As noted above, the quadrature rule $(A\cdot,\cdot)_Q$
localizes the basis functions interaction, therefore 
taking $\tau = \tau_1,\dots,\tau_{dk}$ in \eqref{msmfe-0-1}
results in a $d\,k\times d\,k$ local linear system
for $\s_1,\dots,\s_{dk}$, implying that $(\As)$ is block-diagonal
with $dk\times dk$ blocks associated with the mesh vertices. Furthermore,
$$ 
(A\sigma_h,\tau_i)_Q = \sum_{j=1}^{dk} \sigma_j(A\tau_j,\tau_i)_Q 
= \sum_{j=1}^{dk} (\As)_{ij}\sigma_j, \qquad i=1,...,dk,
$$
and by Lemma~\ref{quad-inner-prod}, each $dk\times dk$ block $(\As)_{ij}$, $i,j=1,...,dk$,
is symmetric and positive definite. 
\end{proof}
}
As a consequence of the above lemma,
$\sigma$ can be easily eliminated from \eqref{sp-matrix}, resulting in the 
displacement-rotation system
\begin{equation}\label{msmfe0-system}
    \begin{pmatrix} \Au\As^{-1}\Au^T & \Au\As^{-1}\Ag^T \\ 
\Ag\As^{-1}\Au^T & \Ag\As^{-1}\Ag^T \end{pmatrix} 
    \begin{pmatrix} u \\ \g \end{pmatrix} =
    \begin{pmatrix} \tilde{f} \\ \tilde{h} \end{pmatrix}.
\end{equation}

\begin{lemma}\label{disp-rot-spd}
The cell-centered displacement-rotation system \eqref{msmfe0-system} is 
symmetric and positive definite.
\end{lemma}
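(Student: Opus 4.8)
The plan is to show that the $2\times 2$ block matrix in \eqref{msmfe0-system} is the Schur complement of the full saddle-point matrix \eqref{sp-matrix} with respect to the invertible block $\As$, and then to argue symmetry and positive definiteness directly. Symmetry is immediate: $\As$ is symmetric (by the previous lemma), hence so is $\As^{-1}$, and the block matrix $\begin{pmatrix} \Au \\ \Ag \end{pmatrix} \As^{-1} \begin{pmatrix} \Au^T & \Ag^T \end{pmatrix}$ is manifestly symmetric. The work is therefore in positive definiteness.

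For positive definiteness, I would take an arbitrary nonzero vector $\begin{pmatrix} u \\ \g \end{pmatrix}$ corresponding to $(u_h,\g_h)\in V_h\times\W_h^0$, and test the quadratic form. Writing $w = \As^{-1}(\Au^T u + \Ag^T \g)$, i.e., letting $\chi_h\in\X_h$ be the stress with coefficient vector $w$, the quadratic form equals $(\Au^T u + \Ag^T\g)^T \As^{-1}(\Au^T u + \Ag^T\g) = w^T \As w = (A\chi_h,\chi_h)_Q$, which by Lemma~\ref{quad-inner-prod} is $\ge \alpha_0\|\chi_h\|^2 \ge 0$. So the form is nonnegative, and it vanishes only if $\chi_h = 0$, equivalently $\Au^T u + \Ag^T \g = 0$, i.e., $(\dvr\tau, u_h) + (\tau,\g_h) = 0$ for all $\tau\in\X_h$. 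It remains to show this forces $u_h = 0$ and $\g_h = 0$, which is exactly the statement that the map $\tau \mapsto ((\dvr\tau,\cdot),(\tau,\cdot))$ has trivial kernel acting from the $(V_h\times\W_h^0)$ side — precisely the injectivity half of the inf-sup condition \ref{S2} invoked in the proof of Theorem~\ref{msmfe-stability-0} (established in \cite{arnold2007mixed,brezzi2008mixed}). Indeed, \ref{S2} gives, for any $(v,\xi)$, a $\tau$ with $(\dvr\tau,v)+(\tau,\xi) \ge c_2\|\tau\|_{\dvr}(\|v\|+\|\xi\|)$, so $(\dvr\tau,u_h)+(\tau,\g_h)=0$ for all $\tau$ implies $\|u_h\|+\|\g_h\|=0$.

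I expect the only mild subtlety to be bookkeeping: making precise that the coefficient vector $w = \As^{-1}(\Au^T u + \Ag^T\g)$ indeed represents the element $\chi_h\in\X_h$ whose $(A\cdot,\cdot)_Q$-norm appears, so that $w^T\As w = (A\chi_h,\chi_h)_Q$ literally, and that $\Au^T u$, $\Ag^T\g$ encode the functionals $\tau\mapsto(\dvr\tau,u_h)$, $\tau\mapsto(\tau,\g_h)$ against the chosen bases. Once this identification is spelled out, the argument is a two-line Schur-complement computation plus a citation of \ref{S2}. There is no real obstacle here — the heavy lifting (block-diagonality and SPD-ness of $\As$, and the inf-sup condition \ref{S2}) has already been done — so the proof is short; the main thing to state carefully is the equivalence $\chi_h = 0 \iff \Au^T u + \Ag^T\g = 0 \iff u_h=0,\ \g_h=0$ via \ref{S2}.
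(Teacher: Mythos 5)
Your proposal is correct and follows essentially the same route as the paper: symmetry from the symmetry of $\As$, and positive definiteness by rewriting the quadratic form as $(\Au^Tv + \Ag^T\xi)^T\As^{-1}(\Au^Tv + \Ag^T\xi)$ and invoking the inf-sup condition \ref{S2} to rule out $\Au^Tv + \Ag^T\xi = 0$ for nonzero $(v,\xi)$. The paper's version is terser (it simply asserts the strict inequality ``due to \ref{S2}''), whereas you spell out the identification of $\Au^Tv+\Ag^T\xi$ with the functional $\tau\mapsto(\dvr\tau,v_h)+(\tau,\xi_h)$ and the injectivity consequence of \ref{S2}; this is the same argument, just made explicit.
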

\begin{proof}
The symmetry of the matrix follows from the symmetry of $\As$.
To show the positive definiteness, for any
$\begin{pmatrix} v^T & \xi^T \end{pmatrix} \neq 0$, 
\begin{equation*}
\begin{pmatrix} v^T & \xi^T \end{pmatrix}
\begin{pmatrix} \Au\As^{-1}\Au^T & \Au\As^{-1}\Ag^T 
\\ \Ag\As^{-1}\Au^T & \Ag\As^{-1}\Ag^T \end{pmatrix}
\begin{pmatrix} v \\ \xi \end{pmatrix} 
= (\Au^Tv + \Ag^T\xi)^T\As^{-1}(\Au^Tv + \Ag^T\xi) > 0,
\end{equation*}
 due to the inf-sup condition \ref{S2}.
\end{proof}

\begin{remark}
The MSMFE-0 method is more efficient than the original MFE method,
since it reduces the initial saddle-point problem to a smaller
symmetric and positive definite cell-centered system for displacement
and rotation. However, further reduction in the system is not
possible, since the diagonal blocks in \eqref{msmfe0-system} couple
all displacement, respectively rotation, degrees of freedom and are
not easily invertible. In the next section we propose a method with
linear rotations and a vertex quadrature rule applied to the
stress-rotation bilinear forms. This allows for further local
elimination of the rotation, resulting in a cell-centered system for
displacement only.
\end{remark}
%

\section{The multipoint stress mixed finite element method with linear rotations
(MSMFE-1)}

In the second method, referred to as MSMFE-1, we use the continuous piecewise
linear space $\W_h^1$ for rotations and apply the quadrature rule to both the stress
bilinear form and the stress-rotation bilinear forms. The method is:
find $\sigma_h \in \X_h,\, u_h \in V_h$ and $\g_h \in \W_h^1$ such that
\begin{align}
	(A\sigma_h,\tau)_Q + (u_h,\dvr{\tau}) + (\g_h,\tau)_Q &= \gnp[g]{\tau}_{\Gd}, &\tau &\in \X_h, \label{msmfe-1-1}\\
	(\dvr \sigma_h,v) &= (f,v), &v &\in V_h, \label{msmfe-1-2} \\
	(\sigma_h,\xi)_Q &= 0, &\xi &\in \W_h^1. \label{msmfe-1-3}
\end{align}
{\color{black}
\begin{remark}\label{scaled-rotation}
We note that the rotation finite element space in the MSMFE-1 method
is continuous, which may result in reduced approximation if the
rotation $\gamma \in L^2(\Omega,\N)$ is discontinuous.  It is possible
to consider a modified MSMFE-1 method based on the scaled rotation
$\tilde \gamma = A^{-1} \gamma$, which is motivated by the relation
$\sigma = A^{-1}\nabla u - A^{-1}\gamma$. This method is better suited
for problems with discontinuous compliance tensor $A$, since in this
case $\sigma$ is smoother than $A\sigma$, implying that $\tilde
\gamma$ is smoother than $\gamma$. The resulting method is: find
$\sigma_h \in \X_h,\, u_h \in V_h$ and $\tilde\g_h \in \W_h^1$ such
that
\begin{align}
	(A\sigma_h,\tau)_Q + (u_h,\dvr{\tau}) + (\tilde\g_h,A\tau)_Q &= \gnp[g]{\tau}_{\Gd}, &\tau &\in \X_h, \label{scaled-msmfe-1-1}\\
	(\dvr \sigma_h,v) &= (f,v), &v &\in V_h, \label{scaled-msmfe-1-2} \\
	(A\sigma_h,\xi)_Q &= 0, &\xi &\in \W_h^1. \label{scaled-msmfe-1-3}
\end{align}
In the numerical section we present an example with discontinuous $A$
and $\gamma$ illustrating the advantage of the modified method
\eqref{scaled-msmfe-1-1}--\eqref{scaled-msmfe-1-3} for problems with
discontinuous coefficients. In order to maintain uniformity of the
presentation in relation to MSMFE-0, as well as conformity with the
standard formulation for weakly symmetric MFE methods for elasticity
used in the literature, in the following we present the well-posedness
and error analysis for the method
\eqref{msmfe-1-1}--\eqref{msmfe-1-3}. We note that the analysis for
the modified method \eqref{scaled-msmfe-1-1}--\eqref{scaled-msmfe-1-3}
is similar.
\end{remark}
}
The stability conditions for the MSMFE-1 method are as follows:
 \begin{enumerate} [label={\bf(S\arabic*)},align=left]
    \setcounter{enumi}{2}
	\item \label{S3} 
There exists $c_3>0$ such that 
for all $\tau \in \X_h$ satisfying 
$\inp[\dvr \tau]{v} + \inp[\tau]{\xi} = 0$ for all $(v,\xi) \in V_h\times \W_h^1$,
$$ 
c_3\| \tau \|_{\dvrg}^2 \le \inp[A\tau]{\tau}_{Q}, 
$$
\item \label{S4} There exists $c_4 > 0$ such that 
$$ 
\inf_{0\neq(v,\xi)\in V_h\times \W_h^1 } \sup_{0\neq \tau\in\X_h} 
\frac{\inp[\dvr\tau]{v}
+\inp[\tau]{\xi}_Q}{\| \tau \|_{\dvrg} \left( \|v\| + \|\xi\| \right)}  \ge c_4. 
$$
 \end{enumerate} 

\subsection{Well-posedness of the MSMFE-1 method}
While the coercivity condition \ref{S3} is again satisfied due to
Lemma~\eqref{quad-inner-prod}, we need to verify the inf-sup condition
\ref{S4}. The difficulty is due to the quadrature rule in
$\inp[\tau]{\xi}_Q$. The next theorem, which is a modification of
\cite[Theorem 3.2]{arnold2015mixed}, provides sufficient conditions
for a triple $\X_h\times V_h \times \W^1_h$ to satisfy \ref{S4}.

\begin{theorem}\label{msmfe-stability-1}
Let $S_h\subset H(\dvr; \Omega)$ and $U_h \subset L^2(\Omega)$ be a
stable mixed Darcy pair, i.e., there exists $c_5 > 0$ such that
\begin{align}
\inf\limits_{0\neq r\in U_h} \sup\limits_{0\neq z \in S_h} 
\frac{\inp[\dvr z]{r}}{\| z \|_{\dvr} \|r\| }  \ge c_5, \label{darcy-pair}
\end{align} 
and let $Q_h \subset H^1(\Omega,\H)$ and $W_h \subset L^2(\Omega,\R^{d(d-1)/2})$ be
a stable mixed Stokes pair, such that $(w,w)_Q^{1/2}$ is a norm in $W_h$
equivalent to $\|w\|$ and there exists $c_6 > 0$ such that
\begin{align}
\inf\limits_{0\neq w\in W_h} \sup\limits_{0\neq q \in Q_h} 
\frac{\inp[\dvr q]{w}_Q}{\| q \|_1 \|w\| }  \ge c_6. \label{inf-sup-mod}
\end{align} 
Suppose further that 
\begin{align} \label{curl-Q-S}
\curl Q_h \subset (S_h)^d.
\end{align}
Then, $\X_h = (S_h)^d \subset H(\dvr;\Omega,\M)$,
$V_h = (U_h)^d \subset L^2(\Omega, \R^d)$, and $\W_h^1 = \Xi(W_h) \subset
L^2(\Omega,\N)$ satisfy \ref{S4}.
\end{theorem}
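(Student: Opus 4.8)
The plan is to verify \ref{S4} directly by constructing, for any given $(v,\xi) \in V_h \times \W_h^1$, a stress $\tau \in \X_h$ that controls $\|v\| + \|\xi\|$ from below while keeping $\|\tau\|_{\dvr}$ bounded. The natural decomposition is $\tau = \tau_1 + \tau_2$, where $\tau_1$ handles the divergence pairing with $v$ and $\tau_2$ handles the $Q$-pairing with $\xi$ through a curl-type construction, with $\tau_2$ chosen divergence-free so it does not spoil the first estimate. I would carry this out in the standard two-step fashion used in \cite{arnold2015mixed}.

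First I would address the rotation $\xi$. Writing $\xi = \Xi(w)$ with $w \in W_h$, the Stokes inf-sup condition \eqref{inf-sup-mod} gives $q \in Q_h$ with $\inp[\dvr q]{w}_Q \ge c_6 \|w\| \|q\|_1$ and $\|q\|_1 \le C\|w\|$ (after normalizing). Then I set $\tau_2 = -\curl q$, which lies in $(S_h)^d = \X_h$ by hypothesis \eqref{curl-Q-S}, and crucially $\dvr \tau_2 = 0$ since $\dvr \circ \curl = 0$ row-wise. The identity \eqref{curl-div}, namely $\asym(\curl w) = -\dvr S(w)$, together with the normalization baked into the definition of $\H$ and $S$ (in 2D $S$ is the identity on $\R^2$, in 3D $S$ is an invertible map on $\M$), lets me relate $\inp[\tau_2]{\xi}_Q = \inp[-\curl q]{\Xi(w)}_Q$ to $\inp[\dvr q]{w}_Q$ via \eqref{asym-identity} applied pointwise at the quadrature nodes — this is the place where it matters that the quadrature rule is applied consistently to both the asymmetry form and the Stokes divergence form, so that the two $(\cdot,\cdot)_Q$ expressions literally coincide. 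This yields $\inp[\tau_2]{\xi}_Q \ge c \|w\|^2 \ge c\|\xi\|^2$ using the norm equivalence of $(w,w)_Q^{1/2}$ with $\|w\|$, while $\|\tau_2\|_{\dvr} = \|\tau_2\| = \|\curl q\| \le C\|q\|_1 \le C\|\xi\|$.

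Next I would handle the displacement. Using the Darcy inf-sup \eqref{darcy-pair} componentwise (applied to each component of $v \in (U_h)^d$), I obtain $\tau_1 \in (S_h)^d = \X_h$ with $\inp[\dvr \tau_1]{v} \ge c_5 \|v\| \|\tau_1\|_{\dvr}$ and $\|\tau_1\|_{\dvr} \le C \|v\|$. Since $\dvr \tau_2 = 0$, we get $\inp[\dvr(\tau_1 + \t \tau_2)]{v} = \inp[\dvr \tau_1]{v}$ for any scalar $\t$, so the displacement control is undisturbed. The final step is the standard weighting argument: set $\tau = \tau_1 + \delta \tau_2$ for a small fixed $\delta > 0$, expand $\inp[\dvr \tau]{v} + \inp[\tau]{\xi}_Q = \inp[\dvr \tau_1]{v} + \delta \inp[\tau_2]{\xi}_Q + \delta \inp[\tau_1]{\xi}_Q$, bound the cross term $\delta |\inp[\tau_1]{\xi}_Q| \le \delta C \|\tau_1\| \|\xi\|$ using the continuity of $(\cdot,\cdot)_Q$ from Lemma~\ref{quad-inner-prod}, absorb it by Young's inequality, and choose $\delta$ small enough that a positive multiple of $\|v\|^2 + \|\xi\|^2$ survives; then divide by $\|\tau\|_{\dvr} \le \|\tau_1\|_{\dvr} + \delta \|\tau_2\|_{\dvr} \le C(\|v\| + \|\xi\|)$.

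The main obstacle I anticipate is \emph{not} the weighting bookkeeping but the correct matching of the pointwise identities \eqref{curl-div} and \eqref{asym-identity} with the quadrature rule: one must check that $\inp[\curl q]{\Xi(w)}_Q = -\inp[\dvr S(w^{\flat})]{\cdots}$-type manipulations go through at the level of nodal values, and in particular that in three dimensions the map $S$ and the row-wise curl interact correctly so that $\curl Q_h$ really is the right space to feed into the Darcy pair — this is exactly why the hypothesis \eqref{curl-Q-S} is phrased in terms of $\curl Q_h \subset (S_h)^d$ rather than something weaker. A secondary subtlety is ensuring the norm equivalence for $(w,w)_Q^{1/2}$ on $W_h$ (an assumption here, proved for the actual space in Lemma~\ref{quad-inner-prod}) is invoked at the right moment so that the coercivity of $\inp[\tau_2]{\xi}_Q$ in $\|\xi\|$ is genuine and $h$-independent.
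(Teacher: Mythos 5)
Your overall strategy --- a two-part construction $\tau = \tau_1 + \delta\tau_2$ with $\tau_1$ from the Darcy pair, $\tau_2 = -\curl q$ divergence-free from the Stokes pair, and a weighting argument --- is a legitimate variant of the paper's proof, and your identification of the pointwise interplay between \eqref{curl-div}, \eqref{asym-identity}, the map $S$, and the vertex quadrature as the delicate point is accurate. However, the combination step as written fails. With $\tau = \tau_1 + \delta\tau_2$ and $\dvr\tau_2 = 0$, the correct expansion is
\begin{equation*}
\inp[\dvr\tau]{v} + \inp[\tau]{\xi}_Q \;=\; \inp[\dvr\tau_1]{v} \;+\; \inp[\tau_1]{\xi}_Q \;+\; \delta\,\inp[\tau_2]{\xi}_Q,
\end{equation*}
so the cross term $\inp[\tau_1]{\xi}_Q$ carries \emph{no} factor of $\delta$ --- you wrote $\delta\inp[\tau_1]{\xi}_Q$, which is where the argument goes wrong. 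Since $|\inp[\tau_1]{\xi}_Q| \le C\|v\|\,\|\xi\|$ with a continuity constant $C$ that has no reason to be dominated by the Darcy inf-sup constant, Young's inequality forces you to absorb $\tfrac{C^2}{2c}\|\xi\|^2$ into the term $\delta c'\|\xi\|^2$, which requires $\delta$ \emph{large}, not small (taking $\|v\| = \|\xi\|$ and $\delta \to 0$ makes the lower bound negative whenever $C$ exceeds the Darcy constant). The fix is either to choose $\delta$ large --- this is harmless since $\|\tau\|_{\dvr} \le C(1+\delta)(\|v\|+\|\xi\|)$ still holds for fixed $\delta$ --- or to eliminate the cross term altogether.

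The paper takes the second route, and it is worth seeing why: rather than building $q$ to pair with $w$ alone, it uses the Stokes inf-sup (through Lemma 3.1 of the cited reference, which also handles the $S$ versus identity discrepancy in 3D) to solve $P_{W_h}^{Q}\dvr S(q) = w - P_{W_h}^Q\asym\eta$, where $\eta$ is the Darcy lift of $v$ and $P_{W_h}^Q$ is the $L^2$-projection in the quadrature inner product. Setting $\tau = \eta - \curl q$ then gives the exact identity $(\asym\tau, w)_Q = (w,w)_Q$, with no cross term and no weighting parameter at all; the norm equivalence $(w,w)_Q^{1/2}\sim\|w\|$ is what converts this into $\ge C\|w\|^2$. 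So your decomposition can be repaired, but as stated the "choose $\delta$ small" step is a genuine error, and the cleaner move is to let the Stokes correction cancel $P_{W_h}^Q\asym\tau_1$ rather than fight the cross term with Young's inequality.
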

\begin{proof}
Let $v \in V_h, \,w \in W_h$ be given. It follows from \eqref{darcy-pair} that
there exists $\eta \in \X_h$ such that
\begin{equation}\label{eta-eqn}
(\dvr \eta, v) =\|v\|^2, \quad \|\eta\|_{\dvr}\leq c_5^{-1}\|v\|.
\end{equation}
Next, from \eqref{inf-sup-mod} and \cite[Lemma 3.1]{arnold2015mixed}
there exists $q \in Q_h$ such that
\begin{align}\label{q-eqn}
	P_{W_h}^{Q}\dvr S (q) = w - P_{W_h}^Q \asym \eta, \quad
\|q\|_1 \le c_6^{-1} \|w - P_{W_h}^Q \asym \eta\| \le C(\|w\| + \|v\|),
\end{align}
where $P_{W_h}^{Q}: L^2(\Omega) \to W_h$ is the $L^2$-projection with respect to 
the norm $(\cdot,\cdot)_Q$, satisfying, for $\varphi \in L^2(\Omega)$, 
{\color{black}$(P_{W_h}^{Q} \varphi - \varphi, w)_Q = 0 \,\, \forall w \in W_h$}. Now let
$$
\t = \eta - \curl q \in \X_h.
$$
Using \eqref{eta-eqn}, we have
\begin{align}\label{div-tau}
	(\dvr \t,v) = (\dvr \eta,v) = \|v\|^2,
\end{align}
and
\begin{equation}\label{tau-bound}
\|\tau\|_{\dvrg} \le C (\|\eta\|_{\dvrg} + \|q\|_1) \le C(\|w\| + \|v\|).
\end{equation}
Also, \eqref{curl-div} implies that $\asym \t = \asym \eta + \dvr S(q)$ 
and
\begin{align}
(\asym\t,w)_Q &= (\asym\eta, w)_Q + (\dvr S(q), w)_Q 
	    = (P_{W_h}^Q \asym\eta, w)_Q + (P_{W_h}^{Q}\dvr S(q),w)_Q \nonumber \\
	    &= (P_{W_h}^Q \asym\eta, w)_Q + (w - P_{W_h}^Q \asym \eta, w)_Q
= (w,w)_Q \ge C \|w\|^2. \label{asym-t}
\end{align}
Let $\xi = \Xi(w) \in \W_h^1$. Using \eqref{asym-identity}, \eqref{div-tau}, 
\eqref{asym-t}, and \eqref{tau-bound}, we obtain
\begin{align*}
	(\dvrg \tau,v) + (\tau, \xi)_Q = (\dvrg \tau,v) + (\asym \tau, w)_Q \geq c \|\tau\|_{\dvrg}(\|v\|+\|\xi\|),
\end{align*}
which completes the proof.
\end{proof}
{\color{black}
We proceed with the verification of the assumptions of Theorem \ref{msmfe-stability-1} for the spaces 
$\X_h\times V_h \times \W^1_h$ defined  in \eqref{spaces-with-j} and 
\eqref{mixed-spaces}. We first establish conditions \eqref{darcy-pair} 
and \eqref{curl-Q-S}. Condition \eqref{inf-sup-mod} is verified in the next section.
}
\begin{lemma}\label{aux-lemma}
Conditions \eqref{darcy-pair} and \eqref{curl-Q-S} hold for 
$\X_h\times V_h \times \W^1_h$ defined  in \eqref{spaces-with-j} and 
\eqref{mixed-spaces}.
\end{lemma}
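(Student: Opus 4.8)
The plan is to verify the two conditions separately, both by reducing to well-known facts about the scalar lowest-order Brezzi--Douglas--Marini space and about continuous piecewise linears, applied componentwise.

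For condition \eqref{darcy-pair}: here $S_h = \BDM_1$ on $\Tc_h$ and $U_h = \Pc_0$, and this is precisely the classical statement that the pair $\BDM_1 \times \Pc_0$ is a stable mixed finite element pair for the Darcy (mixed Poisson) problem \cite{brezzi1985two,brezzi1991mixed}. I would simply invoke this. The matrix-valued spaces $\X_h = (S_h)^d$, $V_h = (U_h)^d$ then inherit the inf-sup condition with the same constant by applying the scalar result row by row: given $r \in V_h$, pick $z^{(i)} \in S_h$ controlling the $i$-th component $r^{(i)}$, and assemble $z \in \X_h$ whose $i$-th row is $z^{(i)}$; then $\dvr z$ has $i$-th component $\dvr z^{(i)}$, so $(\dvr z, r) = \sum_i (\dvr z^{(i)}, r^{(i)})$ and $\|z\|_{\dvr}^2 = \sum_i \|z^{(i)}\|_{\dvr}^2$, giving the bound.

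For condition \eqref{curl-Q-S}: I need $\curl Q_h \subset (S_h)^d$, where $Q_h$ is the continuous piecewise linear space $\W_h^1$ viewed through $\Xi^{-1}$, i.e.\ $Q_h \subset H^1(\Omega,\H)$ with $\H = \R^2$ ($d=2$) or $\M$ ($d=3$), built from $\Pc_1$ functions. The key point is local: on each element $E$, a function $q \in Q_h$ restricted to $E$ is affine (linear plus constant) in each scalar component, so $\curl q$ on $E$ is a matrix field whose entries are affine functions of the spatial variables — recall from the introduction that $\curl$ of a scalar is $(\partial_2\phi,-\partial_1\phi)$ and acts row-wise on vector/matrix fields, so differentiating an affine scalar gives a constant and the components of $\curl q$ are at worst affine. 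Hence $\curl q|_E \in \Pc_1(E)^{d\times d}$, which matches the local space $\hat\X$ after Piola transformation (one must check the transformation is compatible — this is the standard fact that the Piola transform of $\curl$ of a transformed function is $\curl$ of the function, up to the appropriate Jacobian bookkeeping, so the unmapped description $\Pc_1^{d\times d}$ on each physical simplex is the relevant one). It remains to check $H(\dvr)$-conformity, i.e.\ that $\curl q$ has continuous normal components across interelement facets; this follows because $q$ is globally continuous (being in $H^1$ and piecewise $\Pc_1$), and the normal trace of $\curl q$ on a facet $e$ depends only on the tangential derivatives of $q$ along $e$, which are determined by the trace of $q$ on $e$ and hence single-valued. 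Therefore $\curl q \in (S_h)^d = \X_h$.

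The main obstacle I anticipate is the bookkeeping in the last paragraph: carefully matching the Piola-transformed local space $\hat\X(\Eh)$ with the claim ``$\curl q|_E$ is a componentwise-affine matrix field,'' and verifying that the commuting property between $\curl$ and the Piola transformation holds with the scaling conventions fixed in \eqref{spaces-with-j} (the $\frac{1}{J_E}DF_E$ row-wise transform). All of this is routine but must be stated precisely; the conformity argument via continuity of the tangential trace of a globally continuous $\Pc_1$ field is the conceptually clean part. The inf-sup condition \eqref{inf-sup-mod}, being the genuinely new and hard ingredient (the Taylor--Hood pair with vertex quadrature in the divergence form), is explicitly deferred to the next section and is not part of this lemma.
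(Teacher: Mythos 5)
Your treatment of condition \eqref{darcy-pair} matches the paper: both simply invoke the classical stability of the $\BDM_1\times\Pc_0$ Darcy pair and pass to the matrix-valued spaces row by row. The problem is in your treatment of \eqref{curl-Q-S}, where you misidentify the space $Q_h$. You take $Q_h$ to be ``the continuous piecewise linear space $\W_h^1$ viewed through $\Xi^{-1}$,'' but $\Xi^{-1}(\W_h^1)$ is $W_h$, the Stokes \emph{pressure} space (with values in $\R^{d(d-1)/2}$, e.g.\ scalar-valued in 2D), whereas $Q_h$ in Theorem~\ref{msmfe-stability-1} is an auxiliary Stokes \emph{velocity} space with values in $\H$ ($\R^2$ in 2D, $\M$ in 3D). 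These are not the same object, and the choice of $Q_h$ is not free: the pair $(Q_h,W_h)$ must also satisfy the quadrature inf-sup condition \eqref{inf-sup-mod}, which is verified in the following subsections via a macroelement argument for the Taylor--Hood pair $\Pc_2$--$\Pc_1$. Accordingly the paper takes $Q_h$ to be the \emph{continuous piecewise quadratic} space vanishing on $\Gamma_N$, and \eqref{curl-Q-S} then follows from $\curl\Pc_2(\H)\subset(\BDM_1)^d$ (curl of a quadratic is linear, which is exactly the local $\BDM_1$ space). With your $\Pc_1$ choice the containment \eqref{curl-Q-S} would formally hold (curl of a continuous piecewise affine field is piecewise constant and normal-trace continuous), but the companion hypothesis \eqref{inf-sup-mod} would fail for the unstable $\Pc_1$--$\Pc_1$ pair, so the lemma would no longer feed into Theorem~\ref{msmfe-stability-1} and the stability proof would collapse.

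A second, smaller omission: $S_h$ carries the essential boundary condition $z\cdot n=0$ on $\Gamma_N$, so to get $\curl Q_h\subset (S_h)^d$ one must impose $q=0$ on $\Gamma_N$ in $Q_h$ and then prove that $\varphi=0$ on $\Gamma$ implies $(\curl\varphi)\cdot n=0$ on $\Gamma$; the paper devotes a separate lemma to this. Your conformity argument via single-valuedness of tangential traces handles the interelement normal-trace continuity correctly, but it does not address this boundary condition on $\Gamma_N$. Your worry about Piola bookkeeping, on the other hand, is not a real obstacle here: on simplices $F_E$ is affine, so the Piola image of the reference $\Pc_1$ space is just $\Pc_1(E)^d$ per row and the unmapped description suffices.
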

\begin{proof}
{\color{black} The spaces $\X_h\times V_h \times \W^1_h$ defined  in \eqref{spaces-with-j} and 
\eqref{mixed-spaces} satisfy $\X_h = (S_h)^d$, $V_h = (U_h)^d$,
and $\W_h^1 = \Xi(W_h)$ with the spaces}
\begin{equation*}
S_h = \{ z\in H(\dvrg;\Omega): z|_E \overset{\Pc}\leftrightarrow \hat z \in \BDM_1(\Eh), \,
z\cdot n = 0 \mbox{ on } \Gamma_N\}, 
\end{equation*}
\begin{equation*}
U_h = \{r \in L^2(\Omega): r|_E \leftrightarrow \hat r \in \Pc_0(\Eh)\}, \quad
W_h = \{w \in H^1(\Omega): w|_E \leftrightarrow \hat w \in \Pc_1(\Eh)\}.
\end{equation*}
Note that, as shown Lemma~\ref{quad-inner-prod}, 
$W_h$ satisfies the norm equivalence $(w,w)_Q^{1/2} \sim \|w\|$.
The boundary condition in $S_h$ is needed to guarantee the essential
boundary condition in $\X_h$. Since $\BDM_1 \times \Pc_0$ is a stable
Darcy pair \cite{brezzi1991mixed}, \eqref{darcy-pair} holds. Next, we take
$$ 
Q_h = \{ q \in H^1(\Omega,\H) : q_i|_E \in \Pc_2,
\, i=1,\dots d^2(d-1)/2,\, q = 0 \mbox{ on } \Gamma_N \}. 
$$
Note that $\curl \Pc_2(\H) \subset (\BDM_1)^{d}$. The boundary condition in
$Q_h$ guarantees that $\curl Q_h \subset (S_h)^d$, i.e., \eqref{curl-Q-S} holds. In particular,
$(\curl q) n = 0 \mbox{ on } \Gamma_N \, \forall q\in Q_h$, which follows
from the following lemma.
\end{proof}
\begin{lemma}
Let $\Omega$ be a bounded domain of $\R^d,\, d=2,3$ and let $\varphi
\in H^1(\Omega,\R^{d(d-1)/2})$ such that $\varphi=0 \mbox{ on } \Gamma$, where
$\Gamma$ is a non-empty part of $\partial\Omega$. Then
$(\curl \varphi)\cdot n = 0$ on $\Gamma$.
\end{lemma}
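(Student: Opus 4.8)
The plan is to show that the normal trace $(\curl\varphi)\cdot n$ on $\partial\Omega$ is only a tangential (surface) differential operator acting on the boundary trace of $\varphi$; vanishing of $\varphi$ on $\Gamma$ then forces this surface derivative, and hence $(\curl\varphi)\cdot n$, to vanish on $\Gamma$. The two ingredients are the distributional identities $\dvr\,\curl\varphi = 0$ — which for $d=2$ is $\partial_1\partial_2\varphi - \partial_2\partial_1\varphi = 0$ and for $d=3$ is the classical $\dvr\,\curl = 0$ — and $\curl\,\nabla w = 0$ for any scalar $w \in H^1(\Omega)$. From the first, $\curl\varphi \in H(\dvr;\Omega)$, so $(\curl\varphi)\cdot n$ is a well-defined element of $H^{-1/2}(\partial\Omega)$ and satisfies $\langle (\curl\varphi)\cdot n, w\rangle_{\partial\Omega} = \int_\Omega \curl\varphi\cdot\nabla w\, dx$ for all $w \in H^1(\Omega)$. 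To prove $(\curl\varphi)\cdot n = 0$ on $\Gamma$ it suffices to test against $\mu \in C^\infty_c(\Gamma)$ (dense in $H^{1/2}_{00}(\Gamma)$): I would fix such a $\mu$, pick any $w \in H^1(\Omega)$ whose boundary trace is $\mu$, and show $\int_\Omega \curl\varphi\cdot\nabla w\, dx = 0$.

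The key step is a second integration by parts that moves the curl from $\varphi$ to $\nabla w$. Since $\curl\,\nabla w = 0$, Green's formula for the curl gives $\int_\Omega \curl\varphi\cdot\nabla w\, dx = \int_{\partial\Omega}(\varphi\times n)\cdot\nabla w\, ds$ when $d=3$, and the analogous $\int_\Omega \curl\varphi\cdot\nabla w\, dx = \int_{\partial\Omega}\varphi\,\partial_t w\, ds$ when $d=2$ ($t$ the unit tangent on $\partial\Omega$). Because $\varphi\times n$ (respectively $\varphi$) is purely tangential (respectively scalar), only the surface gradient $\nabla_{\partial\Omega}\mu$ of the trace of $w$ enters; being the surface gradient of a function compactly supported in the relatively open set $\Gamma$, it is itself supported in $\Gamma$, and it is smooth, so the boundary term is an honest $L^2$ integral. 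Its integrand is the product of $\varphi\times n$ — which vanishes on $\Gamma$ since $\varphi|_\Gamma = 0$ — with $\nabla_{\partial\Omega}\mu$, which vanishes off $\operatorname{supp}\mu \subset \Gamma$; hence the integrand is identically zero on $\partial\Omega$ and the integral vanishes. Equivalently, one may instead compute in a local orthonormal frame on $\Gamma$ and check that the normal derivatives of $\varphi$ cancel out of $(\curl\varphi)\cdot n$, leaving $(\curl\varphi)\cdot n = \partial_t\varphi$ for $d=2$ and the surface curl of the tangential trace of $\varphi$ for $d=3$, each manifestly zero where $\varphi$ is.

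The only real difficulty here is keeping the function-space bookkeeping honest: interpreting "$(\curl\varphi)\cdot n = 0$ on $\Gamma$" in the correct dual sense on $H^{1/2}_{00}(\Gamma)$, justifying both Green identities for $\varphi \in H^1(\Omega)$ via the two distributional identities, and using a test trace with support compactly contained in the relatively open $\Gamma$ so that the lower-dimensional relative boundary $\partial\Gamma$ never contributes. In the setting where the lemma is applied — $\varphi$ a row of a globally continuous, piecewise-$\Pc_2$ matrix field $q$ and $\Gamma = \Gamma_N$ a union of mesh faces — this all collapses to an elementary face-by-face argument: on each face $e\subset\Gamma_N$ the polynomial $\varphi|_e$ is identically zero, so all its tangential derivatives along $e$ vanish, and the identity expressing $(\curl\varphi)\cdot n|_e$ through those tangential derivatives gives the conclusion pointwise, which is precisely what is needed to ensure $\curl q \in \X_h$.
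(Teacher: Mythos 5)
Your argument is correct, but your primary route is genuinely different from the paper's. The paper argues pointwise: since $\varphi=0$ on $\Gamma$, every tangential derivative $\nabla\varphi_i\cdot t$ vanishes there, and $(\curl\varphi)\cdot n$ is then shown by direct computation to be a linear combination of exactly such tangential derivatives (in 2D via $t=(n_2,-n_1)^T$, in 3D via the three tangent vectors $e_i\times n$), hence zero. Your version of this appears only as the one-sentence ``equivalently'' remark and as the face-by-face observation at the end; your main proof is instead a duality argument: $\dvr\curl\varphi=0$ puts $\curl\varphi$ in $H(\dvr;\Omega)$ so its normal trace is a well-defined functional, and a second Green identity moves the curl onto the test function, leaving a boundary integral of $\varphi\times n$ (resp.\ $\varphi$ in 2D) against a surface gradient supported in $\Gamma$, which vanishes because $\varphi|_\Gamma=0$. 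The trade-off is clear: the paper's computation is short and entirely elementary, but it implicitly treats $\nabla\varphi$ as having pointwise boundary values, which is only literally justified for the continuous piecewise-polynomial $\varphi$ to which the lemma is applied; your argument is heavier but gives the statement honest meaning (and a valid proof) for arbitrary $\varphi\in H^1(\Omega,\R^{d(d-1)/2})$, interpreting $(\curl\varphi)\cdot n=0$ on $\Gamma$ in the natural dual sense. One small point to watch: after fixing the smooth trace $\mu$, you should either take a smooth lifting $w$ or note that $\int_\Omega\curl\varphi\cdot\nabla w\,dx$ depends only on the trace of $w$ (since $\dvr\curl\varphi=0$), so that the boundary term in the second integration by parts is legitimate; you flag this bookkeeping yourself, and it closes without difficulty.
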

\begin{proof}
In 2D, let $t = (t_1, t_2)^T$ be the unit tangential vector on $\Gamma$. 
The assertion of the lemma follows from
$$   
0 = \nabla \varphi\cdot t 
= (\partial_x \varphi) t_1 + (\partial_y \varphi) t_2 
= (\partial_x \varphi) n_2 - (\partial_y \varphi) n_1 
= - \curl \varphi \cdot n.
$$
{\color{black}
In 3D, let $\varphi = (\varphi_1,\varphi_2,\varphi_3)^T$, and $n =
(n_1,n_2,n_3)^T$. Since $\varphi = 0$ on $\Gamma$, it holds that
$\nabla\varphi_i\cdot t = 0$ on $\Gamma$, $i = 1,2,3$, for any
tangential vector $t$. We have
\begin{align*}
& 0 = \nabla \varphi_1\cdot (0,-n_3,n_2)^T = - (\partial_y\varphi_1)n_3 + (\partial_z \varphi_1)n_2,\\
& 0 = \nabla \varphi_2\cdot (n_3,0,-n_1)^T = (\partial_x\varphi_2)n_3 - (\partial_z \varphi_2)n_1,\\
& 0 = \nabla \varphi_3\cdot (-n_2,n_1,0)^T = - (\partial_x\varphi_3)n_2 + (\partial_y \varphi_3)n_1,
\end{align*}
which implies that
\begin{align*}
(\curl \varphi)\cdot n = (\partial_y \varphi_3 - \partial_z\varphi_2)n_1 
+ (\partial_z \varphi_1 - \partial_x\varphi_3)n_2 
+ (\partial_x \varphi_2 - \partial_y\varphi_1)n_3 = 0.
\end{align*}
}
\end{proof}
To show \ref{S4}, it remains to show that \eqref{inf-sup-mod} holds.
It is well known that $\Pc_2 - \Pc_1$ is a stable Taylor-Hood pair for
the Stokes problem \cite{brezzi1991mixed}. However, this does not imply
the inf-sup condition with quadrature \eqref{inf-sup-mod}. We show that
it holds in the next sections.

\subsubsection{The inf-sup condition for the Stokes problem} 
In the following, for simplicity, we let $b(q,w) = (\dvr q,w)$ and 
$b(q,w)_Q = (\dvr q,w)_Q$. We will show the inf-sup condition \eqref{inf-sup-mod}
for spaces $Q_h \subset H^1(\Omega,\R^d)$ and $W_h \subset L^2(\Omega)$, which 
will imply the statement for 
$Q_h \subset H^1(\Omega,\H)$ and $W_h \in L^2(\Omega,\R^{d(d-1)/2})$.
Adopting the approach by Stenberg \cite{stenberg1984analysis} we
introduce a macroelement condition that is sufficient for
\eqref{inf-sup-mod} to hold. A macroelement is a union of one
or more neighboring simplices, satisfying the usual shape-regularity
and connectivity conditions. We say that a macroelement $M$ is
equivalent to a reference macroelement $\hat{M}$, if there is a mapping
$F_M : \hat{M} \to M$, such that
 \begin{enumerate} [label=(\roman*)]
    \item $F_M$ is continuous and one-to-one;
    \item $F_M(\Mh) = M$;
    \item If $\Mh = \cup_{j=1}^m \hat{T}_j$, then $M = \cup_{j=1}^m T_j$
where $T_j = F_M(\hat{T}_j),\, j=1,\dots,m$;
    \item $F_{M|_{\hat{T}_j}}= F_{T_j}\circ F_{\hat{T}_j}^{-1},\, j=1,\dots,m,$ where $F_{\hat{T}_j}$ and $F_{T_j}$ are the affine mappings from the reference simplex onto $\hat{T}_j$ and $T_j$, respectively.
 \end{enumerate} 

The family of macroelements equivalent to $\hat{M}$ is denoted by 
$\mathcal{E}_{\hat{M}}$. Let
\begin{align*}
    Q^0_{M} =\{ q\in H_0^1(M,\R^d): q_{i}|_T\in \mathcal{P}_2,\, 
i=1,\dots,d,\, \forall T\subset M\},
\quad
    W_{M} &= \{ w \in H^1(M) : w|_T \in \mathcal{P}_1,\, \forall T\subset M \},
\end{align*}
\begin{align*}
    W^0_{M} = W_M \cap L^2_0(M), \quad
    N_M =\{ w\in W_M: b(q,w)_{Q,M}=0,\, \forall q \in Q^0_{M} \}.
\end{align*}
We assume that there is a fixed set of classes 
$\mathcal{E}_{\hat{M}_i},\, i=1,...,n$ such that
 \begin{enumerate} [label={\bf(M\arabic*)},align=left]
    \item \label{M1} For each $M \in \mathcal{E}_{\hat{M}_i}$, the space $N_M$ is one-dimensional, 
consisting of constant functions;
    \item \label{M2} There exists a partition $\Mc_h$ of $\mathcal{T}_h$ into macroelements 
$M \in \mathcal{E}_{\hat{M}_i}, \, i=1,...,n$.
 \end{enumerate} 

\begin{theorem}\label{macro-inf-sup}
If \ref{M1}--\ref{M2} are satisfied, then the Stokes inf-sup condition with 
quadrature \eqref{inf-sup-mod} holds.
\end{theorem}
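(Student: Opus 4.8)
The plan is to follow Stenberg's macroelement technique \cite{stenberg1984analysis}, with the extra care forced by the quadrature rule. As announced in the text it suffices to prove \eqref{inf-sup-mod} for the scalar-block spaces $Q_h\subset H^1(\Omega,\R^d)$, $W_h\subset L^2(\Omega)$ (with $q=0$ on $\Gamma_N$ built into $Q_h$); the tensorial case follows block by block. The essential point distinguishing this from the classical Taylor--Hood analysis is that for $q\in\Pc_2$ and $w\in\Pc_1$ the product $\dvr q\,w\in\Pc_2$ is \emph{not} integrated exactly by the vertex rule, so $b(\cdot,\cdot)_Q\neq b(\cdot,\cdot)$ on $Q_h\times W_h$; however, the vertex rule \emph{is} exact whenever one factor is elementwise constant (since $\dvr q\in\Pc_1$ elementwise and the rule is exact on $\Pc_1$). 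This dichotomy dictates the whole argument: the elementwise-constant content of $w$ will be handled by the continuous inf-sup, and the genuinely oscillatory content by assumption \ref{M1}.

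\emph{Step 1: a uniform local inf-sup.} Fix a reference class $\hat M_i$. By \ref{M1}, $N_{\hat M_i}$ consists of constants, so the bilinear form $b(\cdot,\cdot)_{Q,\hat M_i}$ is a nondegenerate pairing on $Q^0_{\hat M_i}\times W^0_{\hat M_i}$; since these spaces are finite-dimensional, there is a constant $\hat\beta_i>0$ realizing the local inf-sup. Pulling back through the equivalence maps $F_M$, using that the mesh is shape-regular and quasi-uniform to bound $\|DF_M^{\pm1}\|$, and that the vertex rule scales by the Jacobian (as in \cite{stenberg1984analysis}), one obtains a single constant $\beta_*>0$, depending only on the reference classes $\hat M_1,\dots,\hat M_n$ and the shape-regularity constant, such that for every macroelement $M\in\Mc_h$,
\[
\sup_{0\neq q\in Q^0_M}\frac{b(q,w)_{Q,M}}{\|q\|_{1,M}}\ \ge\ \beta_*\,\|w-\bar w_M\|_M\qquad\forall\,w\in W_M,
\]
where $\bar w_M$ is the average of $w$ over $M$. (It is here that \ref{M1} does the real work, since it asserts nondegeneracy of the \emph{quadrature} form modulo constants.)

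\emph{Step 2: splitting and patching.} Given $w\in W_h$, let $w_0$ be the $L^2(\Omega)$-orthogonal projection of $w$ onto the space of functions that are constant on each macroelement of $\Mc_h$, and $w_1=w-w_0$, so $w_1$ has zero mean on every $M$ and $\|w\|^2=\|w_0\|^2+\|w_1\|^2$. For each $M$ pick $q_M\in Q^0_M$ with $\|q_M\|_{1,M}=\|w_1\|_M$ and $b(q_M,w_1)_{Q,M}\ge\beta_*\|w_1\|_M^2$, extend by zero, and set $q_1=\sum_{M}q_M\in Q_h$ (conforming, and vanishing on $\Gamma_N$ since each $q_M$ vanishes on $\partial M$). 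Because $q_M\in H^1_0(M)^d$, $w_0$ is constant on $M$, and the vertex rule is exact on $\Pc_1\ni\dvr q_M$, one has $b(q_M,w_0)_{Q,M}=(w_0|_M)\int_M\dvr q_M=(w_0|_M)\int_{\partial M}q_M\cdot n=0$; summing, $b(q_1,w_0)_Q=0$, $b(q_1,w_1)_Q\ge\beta_*\|w_1\|^2$, and $\|q_1\|_1=\|w_1\|$ (disjoint supports). For $w_0$, since $\Gamma_D\neq\emptyset$ the operator $\dvr:H^1_{0,\Gamma_N}(\Omega)^d\to L^2(\Omega)$ is surjective with bounded right inverse, so there is $v\in H^1_{0,\Gamma_N}(\Omega)^d$ with $\dvr v=w_0$ and $\|v\|_1\le C\|w_0\|$; applying a Fortin interpolant $\Pi_h$ for the $\Pc_2$--$\Pc_0$ pair (preserving the boundary condition, with $\int_E\dvr(\Pi_h v-v)=0$ for all $E\in\Tc_h$ and $\|\Pi_h v\|_1\le C\|v\|_1$) and setting $q_2=\Pi_h v\in Q_h$, the exactness of the vertex rule against the elementwise-constant $w_0$ gives $b(q_2,w_0)_Q=(\dvr q_2,w_0)=\sum_E(w_0|_E)\int_E\dvr v=\|w_0\|^2$, while $\|q_2\|_1\le C\|w_0\|$. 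Using also the uniform bound $|b(q,w)_Q|\le C\|q\|_1\|w\|$ (Cauchy--Schwarz and the norm equivalence $(\cdot,\cdot)_Q^{1/2}\sim\|\cdot\|$ from Lemma~\ref{quad-inner-prod}), take $q=q_2+\delta q_1$ and estimate
\[
b(q,w)_Q=\|w_0\|^2+b(q_2,w_1)_Q+\delta\,b(q_1,w_1)_Q\ \ge\ \tfrac12\|w_0\|^2+\big(\delta\beta_*-\tfrac{C^2}{2}\big)\|w_1\|^2,
\]
so choosing $\delta$ large gives $b(q,w)_Q\ge\tfrac12\|w\|^2$ and $\|q\|_1\le C\|w\|$, i.e.\ \eqref{inf-sup-mod}.

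The main obstacle is exactly the interaction between the quadrature and the macroelement patching: one cannot invoke classical Stokes stability because $b(\cdot,\cdot)_Q$ is inexact on $Q_h\times W_h$, and the remedy is to isolate the elementwise-constant content of $w$ (on which the quadrature is exact, so the continuous inf-sup applies verbatim) and absorb the oscillatory remainder through \ref{M1}, whose verification for concrete reference macroelements is the remaining task, carried out in the next subsection. Note finally that, unlike the pure-Dirichlet Stokes setting of \cite{stenberg1984analysis}, no connectivity condition on the macroelement partition is needed here, since $q\in Q_h$ is unconstrained on $\Gamma_D\neq\emptyset$ and hence global constant modes of $w$ are controllable.
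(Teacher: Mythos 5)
Your proof is correct and follows essentially the same macroelement argument as the paper: the same splitting $w=\mathbb{P}_h w+(I-\mathbb{P}_h)w$, the same local inf-sup derived from \ref{M1} by a scaling argument, and the same treatment of the macroelement-constant part via divergence surjectivity plus an $H^1$-stable interpolant against which the vertex rule is exact. The only cosmetic differences are that you use a standard elementwise Fortin operator for the $\Pc_2$--$\Pc_0$ pair where the paper uses Stenberg's interface-based operator $I_h$, and you weight the local correction by a large parameter rather than weighting the global one by a small parameter.
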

Before we prove this result, we prove three auxiliary lemmas, following the 
argument in \cite{stenberg1984analysis}. 

\begin{lemma}\label{macro-lemma-1}
If \ref{M1} holds, then there exists a constant $\beta > 0$ independent of $h$ 
such that, 
\begin{align*}
\forall \, M \in \mathcal{E}_{\hat{M}_i}, \quad
\sup_{0\neq q \in Q^0_{M}}\frac{b(q,w)_{Q,M} }{|q|_{1,M}} 
\geq \beta\|w\|_{M},\, \forall w \in W^0_{M}.
\end{align*}
\end{lemma}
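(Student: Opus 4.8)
The plan is to argue by contradiction using a compactness/scaling argument on the fixed reference macroelement class $\mathcal{E}_{\hat{M}_i}$, in the spirit of Stenberg. Suppose the claim fails. Then for each $n$ there is a macroelement $M_n \in \mathcal{E}_{\hat{M}_i}$ and a function $w_n \in W^0_{M_n}$ with $\|w_n\|_{M_n} = 1$ such that
$$
\sup_{0\neq q \in Q^0_{M_n}}\frac{b(q,w_n)_{Q,M_n}}{|q|_{1,M_n}} < \frac{1}{n}.
$$
First I would transfer everything to the reference macroelement $\hat{M}_i$ via the mapping $F_{M_n}$. Because the macroelements in a fixed class are related to $\hat{M}_i$ by affine maps on each sub-simplex with uniformly bounded (by shape-regularity and quasi-uniformity) Jacobians, the pullbacks $\hat w_n$ live in the fixed finite-dimensional space $W_{\hat{M}_i}$, the seminorms $|\cdot|_{1,M_n}$ and $\|\cdot\|_{M_n}$ are uniformly equivalent to the reference-domain seminorms up to powers of $h$ that cancel in the quotient, and — crucially — the quadrature form $b(\cdot,\cdot)_{Q,M_n}$ pulls back to a form on $\hat{M}_i$ that converges (along a subsequence, since it depends only on finitely many geometric parameters in a compact set, the admissible shapes) to a limiting bilinear form $\hat b_Q$ of the same structure on the reference macroelement.

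Second, since $W_{\hat{M}_i}$ is finite-dimensional and the normalized pullbacks $\hat w_n$ are bounded, I would extract a subsequence with $\hat w_n \to \hat w$ in $W_{\hat{M}_i}$, with $\|\hat w\| $ bounded below (using the uniform equivalence of the $L^2$ norms and that $\|w_n\|_{M_n}=1$), and $\hat w \in L^2_0$ (the mean-zero constraint passes to the limit). Taking the limit in the contradiction hypothesis, and using that $Q^0_{\hat{M}_i}$ is also finite-dimensional so the supremum is attained, gives $\hat b_Q(\hat q, \hat w) = 0$ for all $\hat q \in Q^0_{\hat{M}_i}$, i.e. $\hat w \in N_{\hat{M}_i}$. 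By assumption \ref{M1}, $N_{\hat{M}_i}$ consists of constants; combined with $\hat w \in L^2_0$ this forces $\hat w = 0$, contradicting the lower bound on $\|\hat w\|$. Hence the uniform constant $\beta>0$ exists on each class $\mathcal{E}_{\hat{M}_i}$, and since there are only finitely many classes $i=1,\dots,n$, taking the minimum of the corresponding constants gives a single $\beta$ independent of $h$ and of the macroelement.

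The main obstacle I anticipate is the careful handling of the vertex quadrature form under the scaling/limit: unlike the exact form $b(q,w)$, the form $b(q,w)_{Q,M}$ is not invariant under the Piola-type or affine transformations in a clean way, and one must verify that its pullback to $\hat{M}_i$ depends continuously on the (compactly varying) shape parameters of $M$ so that the subsequential limit $\hat b_Q$ genuinely has the form of a vertex-quadrature divergence form on $\hat{M}_i$ — in particular that $N_{\hat{M}_i}$ computed with $\hat b_Q$ still satisfies \ref{M1}. This is exactly the point the authors flag as differing from Stenberg, since the quadrature weakens the pressure control; one may need to phrase \ref{M1} directly on the reference class (which is how it is stated) and check that the limiting form coincides with the reference vertex-quadrature form, rather than merely being a limit. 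A secondary, more routine point is confirming that all the $h$-dependent scaling factors (powers of $|M|$ and of $h$ from the affine maps) enter numerator and denominator symmetrically and therefore cancel in the quotient $b(q,w)_{Q,M}/|q|_{1,M}$ after the $\|w\|_M = 1$ normalization.
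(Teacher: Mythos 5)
Your proposal is correct and is essentially the paper's own argument: the paper disposes of this lemma in one line by invoking \ref{M1} together with the scaling/compactness argument of Stenberg's Lemma 3.1, which is exactly the contradiction-plus-rescaling reasoning you spell out (including the two points you flag — the scale-invariance of the quotient and the fact that \ref{M1} is assumed for every macroelement in the class, so the null-space condition need not be transported through the quadrature form to a limiting reference configuration). No substantive difference in approach.
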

\begin{proof}
The assertion of the lemma follows from \ref{M1} and a scaling argument, see
\cite[Lemma 3.1]{stenberg1984analysis}.
\end{proof}

Next, let $\mathbb{P}_h$ denote the $L^2$-projection from $W_h$ onto the space
\begin{align*}
M_h =\{\mu \in L^2(\Omega): \mu\big|_M \text{ is constant }\forall \,
M \in \mathcal{M}_h\}.
\end{align*}

\begin{lemma}\label{macro-lemma-2}
If \ref{M1}--\ref{M2} hold, then there exists a constant $C_1 >0$, such that 
for every $w \in W_h$, there exists $q \in Q_h$ satisfying
\begin{align*}
b(q,w)_{Q} = b(q, (I-\mathbb{P}_h)w)_{Q} \geq C_1 \|(I-\mathbb{P}_h)w\|^2, 
\quad \mbox{and} \quad |q|_1 \leq \|(I-\mathbb{P}_h)w\|.
\end{align*}
\end{lemma}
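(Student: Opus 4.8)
The plan is to construct $q$ locally on each macroelement $M \in \mathcal{M}_h$ and then sum up, exploiting the fact that the elements of $Q^0_M$ vanish on $\partial M$ so that the local pieces glue into a global element of $Q_h$. First I would observe that $(I - \mathbb{P}_h)w$, restricted to any macroelement $M$, lies in $W^0_M$: indeed $\mathbb{P}_h w|_M$ is precisely the $L^2(M)$-average of $w$ over $M$, so $(I-\mathbb{P}_h)w|_M$ has zero mean and belongs to $W_M \cap L^2_0(M) = W^0_M$. This is the key structural point that lets us invoke the local inf-sup bound.

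Next, apply Lemma \ref{macro-lemma-1} on each $M$ to the function $(I-\mathbb{P}_h)w|_M \in W^0_M$: there exists $q_M \in Q^0_M$ with
\begin{align*}
b(q_M, (I-\mathbb{P}_h)w)_{Q,M} \geq \beta \|(I-\mathbb{P}_h)w\|_M \, |q_M|_{1,M},
\end{align*}
and by rescaling $q_M$ we may normalize so that $|q_M|_{1,M} = \|(I-\mathbb{P}_h)w\|_M$, which yields $b(q_M,(I-\mathbb{P}_h)w)_{Q,M} \geq \beta \|(I-\mathbb{P}_h)w\|_M^2$. Define $q \in Q_h$ by $q|_M = q_M$ for every $M$; since each $q_M \in H^1_0(M,\R^d)$, the function $q$ is in $H^1(\Omega,\R^d)$ with the correct polynomial degrees, so $q \in Q_h$ (the boundary condition on $\Gamma_N$ is automatic because the $q_M$ vanish on all of $\partial M$). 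Summing the local estimates and using that the vertex quadrature rule, like the exact integral, decomposes over elements and hence over macroelements, gives $b(q,w)_Q = \sum_M b(q_M, w)_{Q,M}$. On each $M$, because $q_M \in Q^0_M$ and $\mathbb{P}_h w|_M$ is constant, Lemma \ref{0-q-err-const} gives $(q_M, \mathbb{P}_h w \, \mathrm{(as\ a\ constant)})$-type cancellation — more precisely, $b(q_M, \mathbb{P}_h w)_{Q,M} = (\dvr q_M, \mathbb{P}_h w)_{Q,M}$, and since $\mathbb{P}_h w$ is constant on $M$ and the quadrature is exact for the product of $\dvr q_M \in \Pc_1$ with a constant, this equals $(\dvr q_M, \mathbb{P}_h w)_M = -\int_{\partial M} q_M \cdot n \, \mathbb{P}_h w = 0$. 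Hence $b(q,w)_Q = b(q,(I-\mathbb{P}_h)w)_Q = \sum_M b(q_M,(I-\mathbb{P}_h)w)_{Q,M} \geq \beta \sum_M \|(I-\mathbb{P}_h)w\|_M^2 = \beta\|(I-\mathbb{P}_h)w\|^2$, and $|q|_1^2 = \sum_M |q_M|_{1,M}^2 = \sum_M \|(I-\mathbb{P}_h)w\|_M^2 = \|(I-\mathbb{P}_h)w\|^2$, so $|q|_1 = \|(I-\mathbb{P}_h)w\|$. Setting $C_1 = \beta$ completes the argument.

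The main obstacle I anticipate is making the cancellation $b(q_M,\mathbb{P}_h w)_{Q,M} = 0$ fully rigorous: one must verify that the vertex quadrature rule is exact for $(\dvr q_M)\,\mathbb{P}_h w$ on each simplex $T \subset M$ — this holds because $\dvr q_M$ is piecewise linear and $\mathbb{P}_h w$ is globally constant on $M$, so the integrand is piecewise linear, for which the vertex rule is exact (as used already in Lemma \ref{0-q-err-const}) — and then that the integration-by-parts $\int_M (\dvr q_M) c \, dx = -\int_{\partial M} (q_M\cdot n)\, c\, ds = 0$ uses $q_M|_{\partial M} = 0$. Since $c = \mathbb{P}_h w|_M$ is a single constant on all of $M$ (not merely piecewise constant), this is a genuine integration by parts over the whole macroelement and the boundary term is unambiguously zero. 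The rest is bookkeeping with the $h$-independent constant $\beta$ from Lemma \ref{macro-lemma-1}, which is uniform because there are finitely many reference macroelement classes.
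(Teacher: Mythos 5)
Your proposal is correct and follows essentially the same route as the paper: restrict $(I-\mathbb{P}_h)w$ to each macroelement where it lies in $W^0_M$, invoke Lemma \ref{macro-lemma-1} to get a local $q_M\in Q^0_M$, glue these into a global $q\in Q_h$, and use the vanishing of $b(q_M,\mathbb{P}_h w)_{Q,M}$ on constants (which the paper attributes to \ref{M1} and you re-derive via quadrature exactness and integration by parts, exactly as in the proof of Lemma \ref{macro-lemma-constantnullspace}). Your explicit normalization $|q_M|_{1,M}=\|(I-\mathbb{P}_h)w\|_M$ and the additivity of the $H^1$-seminorm over the macroelement partition are fine and match the paper's bounds.
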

\begin{proof}
For every $w \in W_h$ we have
$(I -\mathbb{P}_h)w \in W^0_{M},\, \forall M \in \mathcal{M}_h$. 
Then Lemma \ref{macro-lemma-1} implies that for every $M$ there exists 
$q_M \in Q^0_{M}$ such that
\begin{align*}
b(q_M,(I-\mathbb{P}_h)w)_{Q,M} \geq C \|(I-\mathbb{P}_h)w\|^2_{M} 
\quad \mbox{and} \quad |q_M|_{1,M} \leq \|(I-\mathbb{P}_h)w\|_{M},
\end{align*}
Define $q \in Q_h$ by $q \big|_M=q_M, \,\, \forall M \in \mathcal{M}_h$.
It follows from \ref{M1} that
$b(q,\mathbb{P}_h w)_{Q} =0, \,\, \forall w \in W_h$.
{\color{black}Then we have, }
\begin{align*}
b(q,w)_{Q} = b(q,(I-\mathbb{P}_h)w)_{Q}
=\sum_{M\in \mathcal{M}_h}b(q_M,(I-\mathbb{P}_h)w)_{Q,M} 
\geq C \|(I-\mathbb{P}_h)w\|^2,
\end{align*}
which completes the proof.
\end{proof}
\begin{lemma}\label{macro-lemma-3}
There exists a constant $C_2>0$ such that for every 
$w\in W_h$ there exists $g \in Q_h$ such that
\begin{align*}
    b(g,\mathbb{P}_h w)_{Q} =\|\mathbb{P}_h w\|^2 \quad \text{ and }\quad 
\|g\|_1 \leq C_2\|\mathbb{P}_h w\|.
\end{align*}
\end{lemma}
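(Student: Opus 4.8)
The plan is to follow \cite[Lemma 3.3]{stenberg1984analysis}, with one additional step to dispose of the quadrature. The key preliminary observation is that $\mathbb{P}_h w \in M_h$ is piecewise constant with respect to $\mathcal{T}_h$, so for any $g \in Q_h$ the product $\dvr g\cdot\mathbb{P}_h w$ is piecewise linear (since $\dvr g|_T \in \Pc_1(T)$), and because the vertex rule is exact for linear functions — the fact used in the proof of Lemma~\ref{0-q-err-const} — we get $b(g,\mathbb{P}_h w)_Q = (\dvr g,\mathbb{P}_h w)_Q = (\dvr g,\mathbb{P}_h w) = b(g,\mathbb{P}_h w)$. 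Hence it is enough to construct $g \in Q_h$ satisfying the quadrature-free version of the claim, namely $(\dvr g,\mathbb{P}_h w) = \|\mathbb{P}_h w\|^2$ and $\|g\|_1 \le C_2\|\mathbb{P}_h w\|$.

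To do this, I would first invoke the surjectivity of the divergence operator subject to the mixed boundary condition (the continuous inf-sup condition, valid since $\Gd \neq \emptyset$, see \cite{brezzi1991mixed}) to obtain $v \in H^1(\Omega,\R^d)$ with $v = 0$ on $\Gn$, $\dvr v = \mathbb{P}_h w$, and $\|v\|_1 \le C\|\mathbb{P}_h w\|$. Then I would set $g = \Pi_h v$, where $\Pi_h\colon H^1(\Omega,\R^d) \to Q_h$ is an interpolation operator that is $H^1$-stable, preserves homogeneous values on $\Gn$, and reproduces the mean of the divergence over every element, i.e.\ $\int_T \dvr(\Pi_h v - v)\,dx = 0$ for all $T \in \mathcal{T}_h$ (equivalently, matches the facet normal means of $v$, which is possible using the facet degrees of freedom of $\Pc_2$ beyond the vertices); such an operator is standard for the Taylor--Hood pair. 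Since $\mathbb{P}_h w$ is constant on each $T$, a direct computation using $\int_T \dvr g\,dx = \int_T\dvr v\,dx$ and $\dvr v = \mathbb{P}_h w$ gives $(\dvr g,\mathbb{P}_h w) = (\dvr v,\mathbb{P}_h w) = \|\mathbb{P}_h w\|^2$, while $\|g\|_1 = \|\Pi_h v\|_1 \le C\|v\|_1 \le C_2\|\mathbb{P}_h w\|$. Combined with the quadrature-elimination identity above, this is exactly the assertion of the lemma.

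The conceptual content is confined to the two imported ingredients — the continuous inf-sup condition and the existence of the interpolation operator — so the only point requiring care is that $\Pi_h$ can be chosen to be simultaneously $H^1$-stable, to respect the essential condition on $\Gn$, and to preserve element-wise divergence means. The first two properties are routine for a Scott--Zhang-type construction, and the third is obtained by a local correction through the $\Pc_2$ facet degrees of freedom, with no correction needed on facets contained in $\Gn$ since $v$ vanishes there. I expect this bookkeeping, rather than any new estimate, to be the main (and minor) obstacle.
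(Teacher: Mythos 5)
Your proposal is correct and follows essentially the same route as the paper: a right inverse of the divergence with the mixed boundary condition, an $H^1$-stable interpolant into $Q_h$ that preserves divergence means (the paper cites Stenberg's construction, which only needs to match $\int_e I_h z = \int_e z$ on macroelement interfaces, i.e.\ macroelement-wise rather than element-wise preservation), and exactness of the vertex rule on $\Pc_1$ to remove the quadrature. The only difference is cosmetic ordering — you dispose of the quadrature first, the paper last.
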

\begin{proof}
Let $w\in W_h$ be arbitrary. There exists $z \in H^1(\Omega)$, $z = 0$ on
$\Gamma_N$, such that
\begin{align*}
    \dvr z = \mathbb{P}_h w \quad \mbox{and}\quad \|z\|_1 \leq C\|\mathbb{P}_h w\|.
\end{align*}
This follows from \cite{Galdi} by choosing $z = \varphi$ on $\Gamma_D$, where
$\varphi$ is a smooth function with compact support on $\Gamma_D$ such that
$\int_{\Gamma_D} \varphi\cdot n = \int_{\Omega} \mathbb{P}_h w$. We next consider
an operator $I_h:H^1(\Omega) \rightarrow Q_h$ such that
\begin{align}\label{div-Ih}
(\dvr I_h z, \mu) = (\dvr z,\mu) \,\, \forall \mu \in M_h, \quad 
\|I_h z\|_1 \leq C\|z\|_1.
\end{align}
Such an operator is constructed in \cite[Lemma 3.5]{stenberg1984analysis}, by setting the velocity 
degrees of freedom at the midpoints of facets $e$ on the interfaces between macroelements such that
$\int_e I_h z = \int_e z$, which guarantees \eqref{div-Ih}, and local averages for the rest of the 
degrees of freedom. Finally, since the vertex quadrature rule is exact for linear functions, 
we have that $(\dvr I_h z, \mu)_{Q} = (\dvr I_h z, \mu)$, so we can take $g = I_h z$.
\end{proof}
We are now ready to prove the main result stated in Theorem \ref{macro-inf-sup}:
 \begin{proof}[Proof of Theorem \ref{macro-inf-sup}] 
Let $w \in W_h$ be given, and let $q \in Q_h$ and $g\in Q_h$ be the functions
constructed in Lemma \ref{macro-lemma-2} and Lemma \ref{macro-lemma-3}, respectively.
Set $z = q + \delta g$, where $\delta = 2C_1(1+C_2^2)^{-1}$. We then have
\begin{align*}
    b(z,w)_{Q} &=b(q,w)_{Q}+\delta b(g,w)_{Q}  = b(q,w)_{Q}+\delta b(g,\mathbb{P}_h w)_{Q} 
+ \delta b(g,(I-\mathbb{P}_h) w)_{Q} \\
        &\geq C_1\|(I-\mathbb{P}_h)w\|^2 
+\delta \|\mathbb{P}_h w\|^2 - \delta |g|_1\|(I-\mathbb{P}_h)w\| \\
        & \geq C_1(1+C_2^2)^{-1}\|w\|^2,
\end{align*}
and $\|z\|_1 \leq \|(I-\mathbb{P}_h)w\| +\delta C_2\|\mathbb{P}_h w\| \leq C \|w\| $, 
implying that \eqref{inf-sup-mod} holds.
\end{proof}

\subsubsection{Verification of macroelement condition \ref{M1}}
We consider macroelements of the following type. 
\begin{definition}\label{macro-defn}
Each macroelement $M$ is associated with an interior vertex $c$ in
$\mathcal{T}_h$, consisting of all simplices that share that vertex.
\end{definition}
We note that $c$ is the only interior vertex of $M$. All other
vertices are on $\partial M$ and each vertex is connected to $c$ by an
edge. A 2D example of a macroelement that satisfies Definition
\ref{macro-defn} is shown on Figure \ref{fig:macro_patch}. We next
show that \ref{M1} holds.

\begin{figure}
\centering
\begin{minipage}[b]{0.47\textwidth}
\begin{center}
\scalebox{.8}{
\begin{tikzpicture}
    [inner sep=0.65mm,
    displ/.style={circle,draw=black,fill=black,thick},
    rotat/.style={circle,draw=black,fill=white,very thick}]
    \coordinate (p1) at (0.5,0.3);
    \coordinate (p2) at (3.5,0.5);
    \coordinate (p3) at (4.5,1.9); 
    \coordinate (p4) at (3.5,3.9);
    \coordinate (p5) at (0.7,4.1);
    \coordinate (p6) at (0.0,2.1);
    \coordinate (p7) at (2.0,2.0);

    \draw [very thick] (p1) -- (p2);
    \draw [very thick] (p2) -- (p3);
    \draw [snake it] (p3) -- (p4);
    \draw [very thick] (p4) -- (p5);
    \draw [very thick] (p5) -- (p6);
    \draw [very thick] (p6) -- (p1);    
    \draw [very thick] (p7) -- (p1);
    \draw [very thick] (p7) -- (p2);
    \draw [very thick] (p7) -- (p3);
    \draw [very thick] (p7) -- (p4);
    \draw [very thick] (p7) -- (p5);
    \draw [very thick] (p7) -- (p6);

    \node at (p1) [displ] {};
    \node at (p2) [displ] {};
    \node at (p3) [displ] {};
    \node at (p4) [displ] {};
    \node at (p5) [displ] {};
    \node at (p6) [displ] {};
    \node at (p7) [displ] {};

    \draw (0.5,0.3) node[below=3pt] {\small$\r_4$};
    \draw (3.8,0.5) node[below=3pt] {\small$\r_5$};
    \draw (4.5,1.9) node[right=3pt] {\small$\r_6$};
    \draw (3.6,4.0) node[right=3pt] {\small$\r_{N+1}$};
    \draw (0.7,4.1) node[left=3pt] {\small$\r_2$};
    \draw (0.0,2.1) node[left=3pt] {\small$\r_3$};
    \draw (2.1,2.0) node[below=4pt] {\small$\r_1$};

    \draw (0.5,2.7) node[below=3pt] {\small$T_1$};
    \draw (0.6,2.0) node[below=3pt] {\small$T_2$};
    \draw (0.8,0.6) node[right=3pt] {\small$T_3$};
    \draw (3.45,0.7) node[above=3pt] {\small$T_4$};
    \draw (1.4,4.1) node[below=3pt] {\small$T_{N}$};
    \draw (3.4,2.4) node[above=3pt] {$\dots$};

\end{tikzpicture}
}
\end{center}
\caption{Macroelement with $N$ triangles \newline}
\label{fig:macro_patch}
\end{minipage}
\hspace{2em}
\begin{minipage}[b]{0.47\textwidth}
\begin{center}
\scalebox{.8}{
\hspace{-10em}
\begin{tikzpicture}
    [inner sep=0.65mm,
    displ/.style={circle,draw=black,fill=black,thick},
    rotat/.style={circle,draw=black,fill=white,very thick},
    veloc/.style={diamond,draw=black,fill=black,thick}]

    \coordinate (q1) at (1.6,1.4);
    \coordinate (q2) at (4.8,1.8);
    \coordinate (q3) at (1.9,4.0); 
    \coordinate (q4) at (4.5,4.3);
    \coordinate (dof2) at (3.27,2.95);

    \draw [very thick] (q1) -- (q2);
    \draw [very thick] (q3) -- (q4);
    \draw [very thick] (q1) -- (q3);
    \draw [very thick] (q2) -- (q4);
    \draw [thick] (q3) -- (q2);

    \node at (q2) [displ] {};
    \node at (q3) [displ] {};

    \node at (q1) [rotat] {};
    \node at (q4) [rotat] {};

    \node at (dof2) [veloc] {};    

    \draw (1.6,1.4) node[below=3pt] {\small$\r_1$};
    \draw (4.9,1.8) node[below=3pt] {\small$\r_2$};
    \draw (4.6,4.3) node[above=3pt] {\small$\r_3$};
    \draw (1.8,4.0) node[above=3pt] {\small$\r_4$};
    
    \draw (2.0,2.2) node[below=3pt] {\small$T_1$};
    \draw (4.2,4.1) node[below=3pt] {\small$T_2$};

\end{tikzpicture}
}
\end{center}
\caption{Union of two triangles; \\
$\Pc_2-\Pc_1$ degrees of freedom.}
\label{fig:macro_reference}
\end{minipage}

\end{figure}

\begin{lemma}\label{macro-lemma-constantnullspace}
The macroelements $M$ described in Definition~\ref{macro-defn} satisfy \ref{M1}.
\end{lemma}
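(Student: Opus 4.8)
The plan is to prove that every $w\in N_M$ is constant. Since $N_M$ obviously contains the constants --- for a constant $c_0$ and any $q\in Q^0_M$, exactness of the vertex rule on the affine functions $\dvr q|_T$ together with the divergence theorem give $b(q,c_0)_{Q,M}=c_0\sum_{T\subset M}\int_T\dvr q=c_0\int_{\partial M}q\cdot n=0$ --- this will show that $N_M$ is precisely the one-dimensional space of constants, which is condition \ref{M1}.

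Fix $w\in N_M$ and let $c$ denote the unique interior vertex of $M$. As $w$ is continuous and affine on each simplex of $M$, it suffices to prove $w(\r)=w(c)$ for every other vertex $\r$ of $M$. By Definition~\ref{macro-defn}, each such $\r$ is joined to $c$ by a mesh edge $e$ lying in the interior of $M$, so its midpoint $m_e$ is not on $\partial M$, and the edge star $\omega_e=\bigcup\{T\in\mathcal T_h:e\subset T\}$ satisfies $\omega_e\subset M$ because every simplex containing $e$ contains $c$. The test function I would use is the $\Pc_2$ edge bubble $\phi_e$ attached to $m_e$: on each $T\subset\omega_e$ let $\phi_e|_T$ be the product of the two barycentric coordinates of $T$ at the endpoints of $e$, normalized to equal $1$ at $m_e$, and set $\phi_e\equiv0$ off $\omega_e$. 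Then $\phi_e$ is continuous on $M$, supported in $\omega_e$, vanishes on $\partial M$, and so $q:=\mathbf{a}\,\phi_e\in Q^0_M$ for every $\mathbf{a}\in\R^d$.

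The point of this choice is a localization of the quadrature form. On each $T\subset\omega_e$ the function $\dvr q$ is affine, equals $4\,\mathbf{a}\cdot\nabla\lambda^T_{\r}$ at $c$ and $4\,\mathbf{a}\cdot\nabla\lambda^T_{c}$ at $\r$, vanishes at the remaining vertices of $T$, and is identically zero off $\omega_e$; hence, expanding $b(q,w)_{Q,M}$, only the nodal values $w(c)$ and $w(\r)$ survive:
\begin{equation*}
b(q,w)_{Q,M}=(\mathbf{a}\cdot\mathbf{V})\,w(c)+(\mathbf{a}\cdot\mathbf{W})\,w(\r),\qquad
\mathbf{V}:=\tfrac{4}{s}\!\!\sum_{T\subset\omega_e}\!\!|T|\,\nabla\lambda^T_{\r},\quad
\mathbf{W}:=\tfrac{4}{s}\!\!\sum_{T\subset\omega_e}\!\!|T|\,\nabla\lambda^T_{c}.
\end{equation*}
Applying this identity with $w$ replaced by the constant $1$ and using $b(q,1)_{Q,M}=0$ (shown above) yields $\mathbf{a}\cdot(\mathbf{V}+\mathbf{W})=0$ for all $\mathbf{a}$, hence $\mathbf{W}=-\mathbf{V}$ and $b(q,w)_{Q,M}=(\mathbf{a}\cdot\mathbf{V})(w(c)-w(\r))$. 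Finally, since $\lambda^T_{\r}$ is affine and $c,\r$ are both vertices of $T$, $\nabla\lambda^T_{\r}\cdot(\r-c)=\lambda^T_{\r}(\r)-\lambda^T_{\r}(c)=1$, so $\mathbf{V}\cdot(\r-c)=\tfrac{4}{s}\sum_{T\subset\omega_e}|T|>0$ and in particular $\mathbf{V}\neq0$; taking $\mathbf{a}=\mathbf{V}$ in $b(q,w)_{Q,M}=0$ gives $w(\r)=w(c)$. Ranging over all vertices $\r\neq c$ of $M$ --- which by Definition~\ref{macro-defn} are all edge-connected to $c$ --- shows $w$ agrees with $w(c)$ at every vertex of $M$, so $w$ is constant. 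The argument is identical in two and three dimensions, with $s=3$ for triangles and $s=4$ for tetrahedra.

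I do not expect a genuine obstacle. Three points need a moment's care: the localization, i.e.\ that in the expansion of $b(q,w)_{Q,M}$ only $w(c)$ and $w(\r)$ appear, which rests on $\dvr q$ vanishing at the remaining vertices of each $T\subset\omega_e$; the vanishing $\mathbf{V}+\mathbf{W}=0$, which is exactness of the vertex rule on affine functions combined with $q\in H^1_0(M)$; and the non-degeneracy $\mathbf{V}\neq0$, which follows from the elementary identity $\nabla\lambda^T_{\r}\cdot(\r-c)=1$. Recognizing that the right test functions are the edge bubbles on the spokes of the vertex patch, one per non-central vertex, is really the only idea, and it is the natural analogue --- adapted to the vertex quadrature --- of the corresponding step in \cite{stenberg1984analysis}.
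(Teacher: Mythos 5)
Your proof is correct and rests on the same key idea as the paper's: test with the $\Pc_2$ edge bubbles at the midpoints of the interior edges (the spokes $c\,\r$) to force $w(c)=w(\r)$ for every non-central vertex $\r$, then note that constants do lie in $N_M$ because the vertex rule is exact on the affine functions $\dvr q|_T$. The only difference is organizational: the paper sums over the two triangles sharing a spoke and computes in explicit Cartesian coordinates, whereas you sum over the whole edge star in barycentric coordinates (the two computations agree in 2D, e.g.\ your coefficient $\tfrac{4|T|}{3}(\nabla\lambda^T_{\r})_x=\tfrac{2}{3}(y_1-y_2)$ reproduces the paper's formula), and your version has the advantage of covering the 3D case --- where the edge star contains arbitrarily many tetrahedra --- with the same formula, a step the paper only asserts is straightforward.
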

\begin{proof}
For the sake of space, we present the proof for the 2D case. The
extension to 3D is straightforward. We first consider a union of two
triangles, $T_1 \cup T_2$, sharing an edge, as shown on Figure
\ref{fig:macro_reference}, and compute
\begin{align*}
(\dvr q_j, w)_{Q,T_1 \cup T_2} = \sum_{i=1}^2 
\left(\tr\left(\nabla q_j\right), w\right)_{Q,T_i} 
= \sum_{i=1}^2 \left(\tr \left(DF_{T_i}^{-T}\hat{\nabla} \hat{q}_j\right), 
\hat{w}J_{T_i}\right)_{\hat{Q},\hat{T}}, \quad j=1,2,
\end{align*}
where $q_1$ and $q_2$ are the velocity degrees of freedom associated
with the midpoint of edge $\r_{24}$.  Let us assume
that $F_{T_1}: \hat T \rightarrow T_1$ maps $\rh_1 \rightarrow \r_1,\, \rh_2
\rightarrow \r_2$ and $\rh_3 \rightarrow \r_4$. Then $DF_{T_1} = [\r_{21},\r_{41}]$ and 
we have

\begin{align*}
\hat{q}_1 = \begin{pmatrix} 4\hat{x}\hat{y} \\ 
0 \end{pmatrix}, \quad \hat{q}_2 = \begin{pmatrix} 0 
\\ 4\hat{x}\hat{y} \end{pmatrix}, \quad 
DF^{-T}_{T_1} = \frac{1}{J_{T_1}}\begin{pmatrix}
 y_4-y_1 & x_1-x_4 \\
 y_1-y_2 & x_2-x_1
 \end{pmatrix},
\end{align*}
which implies
\begin{align*}
(\dvr q_1, w)_{Q,T_1} &= \frac{2}{3}\left( (y_1-y_2)w(\r_2) + (y_4-y_1)w(\r_4) \right),\\
(\dvr q_2, w)_{Q,T_1} &= \frac{2}{3}\left( (x_2-x_1)w(\r_2) + (x_1-x_4)w(\r_4) \right). 
\end{align*}
Similarly, let $F_{T_2}: \hat T \rightarrow T_2$ map
$\rh_1 \rightarrow \r_2,\, \rh_2 \rightarrow \r_3$ and
$\rh_3 \rightarrow \r_4$. Then we have
\begin{align*}
\hat{q}_1\big|_{\hat{T}} = \begin{pmatrix} 4\hat{y} - 4\hat{x}\hat{y}-4\hat{y}^2 
\\ 0 \end{pmatrix}, \quad \hat{q}_2\big|_{\hat{T}} = \begin{pmatrix} 0 
\\ 4\hat{y} - 4\hat{x}\hat{y}-4\hat{y}^2 \end{pmatrix}, \quad
DF^{-T}_{T_2} = \frac{1}{J_{T_2}}\begin{pmatrix}
 y_4-y_2 & x_2-x_4 \\
 y_2-y_3 & x_3-x_2
 \end{pmatrix},
\end{align*}
which implies
\begin{align*}
(\dvr q_1, w)_{Q,T_2} &= \frac{2}{3}\left( (y_2-y_3)w(\r_2) + (y_3-y_4)w(\r_4) \right),\\
(\dvr q_2, w)_{Q,T_2} &= \frac{2}{3}\left( (x_3-x_2)w(\r_2) + (x_4-x_3)w(\r_4) \right). 
\end{align*}
Therefore, we obtain
\begin{align*}
(\dvr q_1, w)_{Q,T_1 \cup T_2} &= \frac{2}{3} (y_1-y_3)(w(\r_2) - w(\r_4)),\\
(\dvr q_2, w)_{Q,T_1 \cup T_2} &= \frac{2}{3} (x_3-x_1)(w(\r_2) - w(\r_4)). 
\end{align*}
Since $x_1 - x_3$ and $y_1 - y_3$ cannot be both zero, it follows from 
$(\dvr q_1, w)_{Q,T_1 \cup T_2}=0$ and $(\dvr q_2, w)_{Q,T_1 \cup T_2}=0$ that $w(\r_2)=w(\r_4)$. 

Let $M$ be a macroelement described in Definition~\ref{macro-defn} and let $w \in N_M$.  
The above argument can
be applied to every pair of triangles in $M$ that
share an edge, which implies that for every interior edge, the values
of $w$ at the interior vertex and the boundary vertex are equal. Since
all boundary vertices are connected to the interior vertex, this 
implies that $w$ has the same value at all vertices, i.e., $w$ is
a constant on $M$.
On the other hand, if $w$ is a constant on $M$, since the 
quadrature rule is exact for linear functions on each $T_i$, we have 
for any $q \in Q_M^0$,
$$
(\dvr q, w)_{Q,M} = \sum_{i=1}^N (\dvr q, w)_{Q,T_i} =  \sum_{i=1}^N (\dvr q, w)_{T_i} = (\dvr q, w)_M = 
- (q, \nabla w)_M = 0.
$$
Therefore, $N_M$ is one-dimensional, consisting of constant functions.
\end{proof}

We are now ready to prove the well-posedness of the MSMFE-1 method.
\begin{theorem}\label{MSMFE1-well-posed}
Assuming that \ref{M2} holds with macroelements described in Definition~\ref{macro-defn}, 
then he MSMFE-1 method \eqref{msmfe-1-1}--\eqref{msmfe-1-3} has a unique solution.
\end{theorem}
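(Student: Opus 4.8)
The plan is to verify that the hypotheses of Theorem~\ref{msmfe-stability-1} are satisfied for the spaces $\X_h\times V_h\times\W^1_h$ of \eqref{spaces-with-j}--\eqref{mixed-spaces}, and then invoke the classical Babu\v{s}ka--Brezzi theory. First I would recall from Lemma~\ref{quad-inner-prod} that $\inp[A\tau]{\tau}_Q^{1/2}$ is a norm on $\X_h$ equivalent to $\|\cdot\|$, which immediately gives the coercivity condition \ref{S3} on the kernel of the combined constraint operator $(v,\xi)\mapsto(\dvr\tau,v)+(\tau,\xi)_Q$, exactly as in the proof of Theorem~\ref{msmfe-stability-0}. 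The remaining, substantive ingredient is the inf-sup condition \ref{S4}.

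Next I would assemble \ref{S4} from the earlier results. By Lemma~\ref{aux-lemma}, the spaces $S_h$, $U_h$, $W_h$ identified there satisfy the stable Darcy pair condition \eqref{darcy-pair} and the compatibility condition $\curl Q_h\subset(S_h)^d$ in \eqref{curl-Q-S}, with $\X_h=(S_h)^d$, $V_h=(U_h)^d$, $\W_h^1=\Xi(W_h)$. Lemma~\ref{quad-inner-prod} also supplies the norm equivalence $(w,w)_Q^{1/2}\sim\|w\|$ on $W_h$. The one hypothesis of Theorem~\ref{msmfe-stability-1} that is not yet in place is the Stokes inf-sup condition with quadrature \eqref{inf-sup-mod} for the Taylor--Hood pair $Q_h\subset H^1(\O,\R^d)$, $W_h\subset L^2(\O)$. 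This is obtained from Theorem~\ref{macro-inf-sup}, whose hypotheses are the macroelement conditions \ref{M1} and \ref{M2}: condition \ref{M1} is established in Lemma~\ref{macro-lemma-constantnullspace} for the vertex-patch macroelements of Definition~\ref{macro-defn}, and condition \ref{M2}---the existence of a covering of $\Tc_h$ by such macroelements---is precisely the standing assumption of the present theorem. Hence \eqref{inf-sup-mod} holds, and with it all hypotheses of Theorem~\ref{msmfe-stability-1}, so \ref{S4} follows.

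Finally, with \ref{S3} and \ref{S4} verified, the MSMFE-1 system \eqref{msmfe-1-1}--\eqref{msmfe-1-3} fits the abstract framework of mixed methods in \cite{brezzi1991mixed}: the bilinear form $\inp[A\cdot]{\cdot}_Q$ is coercive on the kernel and the combined constraint form is inf-sup stable, which gives existence and uniqueness of $(\sigma_h,u_h,\g_h)\in\X_h\times V_h\times\W_h^1$ exactly as in the proof of Theorem~\ref{msmfe-stability-0}. Since the problem is finite-dimensional and square, uniqueness is equivalent to solvability, so the argument can equivalently be phrased as showing that the homogeneous system has only the trivial solution, using \ref{S3} to force $\sigma_h=0$ and then \ref{S4} to force $u_h=0$, $\g_h=0$.

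The main obstacle is not in this final assembly---which is bookkeeping---but in the ingredients already proved, above all the quadrature inf-sup condition \eqref{inf-sup-mod}: the vertex rule weakens the control of $W_h$ by $\dvr Q_h$, so the standard Taylor--Hood stability does not transfer directly and the macroelement argument of Theorem~\ref{macro-inf-sup} together with the explicit local computation in Lemma~\ref{macro-lemma-constantnullspace} is needed. Given those results, the proof of the theorem itself is a short deduction.
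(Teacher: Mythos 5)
Your proposal is correct and follows essentially the same route as the paper: coercivity \ref{S3} from Lemma~\ref{quad-inner-prod}, and the inf-sup condition \ref{S4} assembled from Theorem~\ref{msmfe-stability-1}, Lemma~\ref{aux-lemma}, Theorem~\ref{macro-inf-sup}, and Lemma~\ref{macro-lemma-constantnullspace} under the standing assumption \ref{M2}, followed by the standard Babu\v{s}ka--Brezzi conclusion. The paper's own proof is exactly this chain of citations, stated more tersely.
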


\begin{proof}
The existence and uniqueness of a solution to \eqref{msmfe-1-1}--\eqref{msmfe-1-3} follows from
\ref{S3} and \ref{S4}. Lemma~\ref{quad-inner-prod} implies the coercivity condition \ref{S3}. 
Assuming \ref{M2}, the inf-sup condition \ref{S4} follows from a combination of 
Theorem~\ref{msmfe-stability-1}, Lemma~\ref{aux-lemma}, Theorem~\ref{macro-inf-sup}, and
Lemma~\ref{macro-lemma-constantnullspace}.
\end{proof}


\subsection{Reduction to a cell-centered displacement system of the MSMFE-1 method}
We recall the displacement-rotation system \eqref{msmfe0-system} of the MSMFE-0 method, obtained 
after a local stress elimination. In the MSMFE-1 method, the matrix $\Ag$ is different from the 
MSMFE-0 method, since it involves the quadrature rule, i.e., $(\Ag)_{ij} = (\tau_j,\xi_i)_Q$.
Since the quadrature rule localizes the interaction of basis functions around
each vertex, $(\Ag)$ is block-diagonal with $d(d-1)/2 \times dk$ blocks with elements
$(\Ag)_{ij} = (\tau_j,\xi_i)_Q$, $i = 1,\dots,d(d-1)/2$, $j = 1,\dots,dk$.

\begin{lemma}\label{C-diagonal}
The matrix $\Ag\As^{-1}\Ag^T$ in the MSMFE-1 method is block-diagonal and invertible.
\end{lemma}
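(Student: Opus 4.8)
The plan is to argue that $\Ag\As^{-1}\Ag^T$ is block-diagonal directly from the block-diagonal structure of $\As$ and $\Ag$, and then to establish invertibility from the inf-sup condition \ref{S4} restricted to each vertex patch. First I would observe that, as noted in the text, both $\As$ and $\Ag$ are block-diagonal with blocks indexed by the mesh vertices: the stress degrees of freedom $\s_1,\dots,\s_{dk}$ attached to a vertex $\r$ interact only among themselves in $(A\cdot,\cdot)_Q$, and the rotation degrees of freedom $\xi_1,\dots,\xi_{d(d-1)/2}$ at $\r$ (the nodal values of the $\Pc_1$ skew tensor) couple only to those same stress degrees of freedom in $(\cdot,\cdot)_Q$. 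Hence if we order all degrees of freedom vertex-by-vertex, $\As = \operatorname{diag}(\As^{(\r)})$ and $\Ag = \operatorname{diag}(\Ag^{(\r)})$, so $\Ag\As^{-1}\Ag^T = \operatorname{diag}\big(\Ag^{(\r)}(\As^{(\r)})^{-1}(\Ag^{(\r)})^T\big)$, which is block-diagonal with blocks of size $d(d-1)/2$ associated with the mesh vertices. Here $(\As^{(\r)})^{-1}$ makes sense because each diagonal block of $\As$ is symmetric positive definite by Lemma~\ref{quad-inner-prod}.

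For invertibility of each block $\Ag^{(\r)}(\As^{(\r)})^{-1}(\Ag^{(\r)})^T$, it suffices to show that $(\Ag^{(\r)})^T$ has full column rank, i.e., that the only $\xi$ supported at the vertex $\r$ with $(\tau,\xi)_Q = 0$ for all stress basis functions $\tau$ attached to $\r$ is $\xi = 0$. I would show this by localizing the argument from the proof of Lemma~\ref{disp-rot-spd}: for any nonzero nodal rotation vector $\xi$ at $\r$,
\[
\xi^T \Ag^{(\r)}(\As^{(\r)})^{-1}(\Ag^{(\r)})^T \xi = \big((\Ag^{(\r)})^T\xi\big)^T (\As^{(\r)})^{-1} \big((\Ag^{(\r)})^T\xi\big) \ge 0,
\]
with equality iff $(\Ag^{(\r)})^T\xi = 0$, i.e., iff $(\tau,\Xi(w))_Q = 0$ for all $\tau \in \X_h$ supported at $\r$, where $\xi = \Xi(w)$. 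By the identity \eqref{asym-identity} this is $(\asym\tau, w)_Q = 0$. I would then invoke the construction in the proof of Theorem~\ref{msmfe-stability-1}, specialized to a test function $w$ that is the nodal basis function at $\r$: the vector field $\tau = \eta - \curl q \in \X_h$ produced there satisfies $(\asym\tau,w)_Q = (w,w)_Q \ge C\|w\|^2 > 0$, a contradiction unless $w = 0$. This shows each block is symmetric positive definite, hence invertible.

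The main obstacle is that the construction of $\tau$ in Theorem~\ref{msmfe-stability-1} is a global one (it uses the global inf-sup conditions \eqref{darcy-pair} and \eqref{inf-sup-mod}), whereas here I need a stress test function supported on the patch of $\r$ so that it only pairs with $\Ag^{(\r)}$. The clean way around this is to not localize $\tau$ at all: since $\Ag\As^{-1}\Ag^T$ is already shown to be block-diagonal, its $\r$-block is positive definite as soon as the full matrix $\Ag\As^{-1}\Ag^T$ is positive definite, and the latter follows verbatim from the computation in Lemma~\ref{disp-rot-spd} together with the inf-sup condition \ref{S4} established in Theorem~\ref{MSMFE1-well-posed}. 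Thus I would structure the proof as: (1) block-diagonality from the block structure of $\As$ and $\Ag$; (2) positive definiteness of $\Ag\As^{-1}\Ag^T$ exactly as in Lemma~\ref{disp-rot-spd}, now using \ref{S4}; (3) conclude that every diagonal block is SPD, hence invertible. The only genuinely new point beyond Lemma~\ref{disp-rot-spd} is step (1), and that is immediate once one checks which degrees of freedom the vertex quadrature rule couples.
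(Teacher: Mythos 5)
Your final argument --- block-diagonality from the vertex-block structure of $\As$ and $\Ag$, and invertibility of each diagonal block by observing that the full matrix $\Ag\As^{-1}\Ag^T$ is positive definite via the inf-sup condition \ref{S4} (the computation of Lemma~\ref{disp-rot-spd} with $v=0$) together with the positive definiteness of the blocks of $\As^{-1}$ --- is correct and is essentially the paper's own proof. The localization detour you describe and then abandon is indeed unnecessary; the paper likewise goes straight to the global positive-definiteness argument and restricts to each block afterward.
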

\begin{proof}
Since $(\Ag)$ is block-diagonal with $d(d-1)/2 \times dk$ blocks and
$\As$ is block-diagonal with $dk\times dk$ blocks, then
$\Ag\As^{-1}\Ag^T$ is block-diagonal with $d(d-1)/2 \times d(d-1)/2$
blocks. Note that for $d=2$ the blocks are $1\times 1$, i.e., the
matrix is diagonal, and for $d = 3$ the blocks are $3\times 3$. The
blocks couple the $d(d-1)/2$ rotation degrees of freedom associated
with a vertex. Each block is invertible, due to the inf-sup condition
\ref{S4} and the fact that the blocks of $\As^{-1}$ are symmetric and
positive definite.
\end{proof}
The above result implies that the rotation $\g$ can be easily eliminated from the system
\eqref{msmfe0-system} by solving local $d(d-1)/2 \times d(d-1)/2$ problems, resulting
in a cell-centered system for the displacement $u$:
\begin{equation} \label{msmfe1-system}
    (\Au\As^{-1}\Au^T - \Au\As^{-1}\Ag^T (\Ag\As^{-1}\Ag^T)^{-1} \Ag\As^{-1}\Au^T)u = \hat{f}.
\end{equation}
\begin{lemma}
The matrix in \eqref{msmfe1-system} is symmetric and positive definite.
\end{lemma}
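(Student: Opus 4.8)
The plan is to show that the Schur complement matrix
$$
\Mc := \Au\As^{-1}\Au^T - \Au\As^{-1}\Ag^T (\Ag\As^{-1}\Ag^T)^{-1} \Ag\As^{-1}\Au^T
$$
is the Schur complement of the symmetric positive definite matrix $\As^{-1}$ restricted to an appropriate subspace, hence inherits symmetry and positive definiteness. Symmetry is immediate: $\As$ is symmetric (Lemma in Section~3), so $\As^{-1}$ is symmetric, and each term in $\Mc$ is manifestly of the form $B\As^{-1}B^T$ or a symmetric combination thereof; transposing $\Mc$ leaves it unchanged. The substance is positive definiteness.

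For positive definiteness, I would argue as follows. Fix $v \neq 0$ and set $\xi = -(\Ag\As^{-1}\Ag^T)^{-1}\Ag\As^{-1}\Au^T v$, which is well-defined since $\Ag\As^{-1}\Ag^T$ is invertible by Lemma~\ref{C-diagonal}. A direct computation — completing the square in the quadratic form associated with the full displacement-rotation matrix in \eqref{msmfe0-system} — shows that
$$
v^T \Mc\, v = \min_{\zeta} \begin{pmatrix} v^T & \zeta^T \end{pmatrix}
\begin{pmatrix} \Au\As^{-1}\Au^T & \Au\As^{-1}\Ag^T \\ \Ag\As^{-1}\Au^T & \Ag\As^{-1}\Ag^T \end{pmatrix}
\begin{pmatrix} v \\ \zeta \end{pmatrix}
= (\Au^T v + \Ag^T \xi)^T \As^{-1} (\Au^T v + \Ag^T \xi),
$$
where the minimum is attained at $\zeta = \xi$. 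Since $\As^{-1}$ is symmetric positive definite, this quantity is nonnegative, and it is strictly positive unless $\Au^T v + \Ag^T \xi = 0$. So it remains to rule out $\Au^T v + \Ag^T \xi = 0$ with $v \neq 0$. But this is exactly the situation excluded by the inf-sup condition \ref{S4}: if $\Au^T v + \Ag^T \xi = 0$, then for all $\tau \in \X_h$ we have $(\dvr\tau, v) + (\tau,\xi)_Q = \tau^T(\Au^T v + \Ag^T \xi) = 0$, and \ref{S4} forces $\|v\| + \|\xi\| = 0$, contradicting $v\neq 0$. Hence $v^T\Mc\, v > 0$.

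The main obstacle — such as it is — is bookkeeping: making sure the identification of $\Mc\, v$ with the minimized quadratic form over $\zeta$ is carried out cleanly, and that the algebraic vector $\Au^T v + \Ag^T \xi$ is correctly matched to the functional pairing $(\dvr\tau, v) + (\tau,\xi)_Q$ so that \ref{S4} can be invoked. This is entirely analogous to the argument in Lemma~\ref{disp-rot-spd}, where positive definiteness of the MSMFE-0 displacement-rotation system was reduced to \ref{S2}; here we simply take one further Schur complement and appeal to \ref{S4} in place of \ref{S2}. No genuinely new estimate is needed. I would keep the proof to a few lines, referencing Lemma~\ref{C-diagonal} for invertibility of $\Ag\As^{-1}\Ag^T$ and \ref{S4} for the final strict inequality.
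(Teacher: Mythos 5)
Your proposal is correct and follows essentially the same route as the paper: the paper identifies the matrix as the Schur complement of the symmetric positive definite displacement-rotation matrix from \eqref{msmfe0-system} (whose positive definiteness rests on \ref{S4}, exactly as in Lemma~\ref{disp-rot-spd}) and then cites the standard fact that Schur complements of SPD matrices are SPD. You simply prove that standard fact inline via the minimization characterization of the Schur complement, which is a fine substitute for the citation.
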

\begin{proof}
The matrix \eqref{msmfe1-system} is a Schur complement of the matrix
in \eqref{msmfe0-system}, which is symmetric and positive definite due
to the inf-sup condition \ref{S4} and the proof of
Lemma~\ref{disp-rot-spd}. A well known result from linear
algebra \cite[Theorem 7.7.6]{Horn-Johnson} implies that the 
matrix \eqref{msmfe1-system} is also symmetric and positive
definite.
\end{proof}


\section{Error analysis}
In this section we analyze the convergence of
the proposed methods. We will use several well
known projection operators. We consider the $L^2$-orthogonal
projection $\Quh: V \to V_h$ such that 
\begin{equation} \label{proj-prop-1}
(v-\Quh v, w) = 0, \qquad \forall w\in V_h,
\end{equation}
and the $L^2$-orthogonal projection $\Qgh: \W \to \W_h^k$, $k = 0,\,1$ such that
\begin{equation} \label{proj-prop-2}
(\xi - \Qgh \xi, \zeta) = 0, \qquad \forall \zeta \in \W_h^k, \,\, k=0,1.
\end{equation}
We also consider the MFE projection operator \cite{brezzi1985two,brezzi1991mixed}
$\Pi: \, \X \cap H^1(\Omega,\M) \to \X_h$ such that
\begin{equation} \label{space-div-prop}
(\dvr (\Pi\tau - \tau), v) = 0, \qquad \forall v \in V_h.
\end{equation}
These operators have approximation properties \cite{ciarlet2002finite,brezzi1985two,brezzi1991mixed}
\begin{align}
    &\| v - \Quh v \| \le C h^r \|v\|_r,          && 0\le r\le 1, \label{prop-Quh}\\
    &\| \xi - \Qgh \xi \| \le C h^r \|\xi\|_r, && 0\le r\le 1, \label{prop-Qgh}\\
  &\| \tau - \Pi\tau \| \le C h^r \|\tau\|_r,   && 1\le r\le 2, \label{prop-Pi}\\
  & \|\dvr(\tau - \Pi\tau)\| \le C h^r \|\dvr \tau\|_r, && 0 \le r \le 1. \label{prop-div}
\end{align}
For $\varphi \in L^2(E)$, let $\bar\varphi$ be its mean value on $E$, which satisfies
\begin{equation}\label{mean-value}
\|\varphi - \bar\varphi\|_E \le C h \|\varphi\|_{1,E}, \quad
\|\varphi - \bar\varphi\|_{\infty,E} \le C h \|\varphi\|_{\infty,E}.
\end{equation}
We will also use the inverse inequality for a finite element function $\varphi$ \cite{ciarlet2002finite}
\begin{equation}\label{inverse}
  \|\varphi\|_{j,E} \le C h^{-1} \|\varphi\|_{j-1,E}, \quad j \ge 1.
  \end{equation}
We will make use of the following continuity bounds. 
\begin{lemma} \label{proj-continuity}
For all elements $E$ there exist a constant $C$ independent of $h$ such that
    \begin{align}
        &\|\Pi\t\|_{1,E} \le C\|\t\|_{1,E}, \quad \forall\t\in H^{1}(E,\M), \label{mixed-proj-continuity} \\
        &\|\Qgh \xi\|_{1,E} \le C\|\xi\|_{1,E}, \quad \forall \xi\in H^1(E, \N). \label{continuity-l2-proj}
    \end{align}
\end{lemma}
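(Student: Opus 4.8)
The plan is to prove both continuity bounds by the standard scaling argument: pull each inequality back to the reference element $\Eh$, where finite-dimensionality gives the bound with some fixed constant, and then track the powers of $h$ introduced by the transformations to verify that they cancel. I would carry out \eqref{mixed-proj-continuity} first and then \eqref{continuity-l2-proj}, which is the easier of the two.

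For \eqref{mixed-proj-continuity}, fix $E\in\Tc_h$ and $\t\in H^1(E,\M)$. Using the Piola transformation $\t\overset{\Pc}\leftrightarrow\hat\t$ defined in the excerpt, the commuting property \eqref{space-div-prop} of $\Pi$ transfers to the reference element, so that on $\Eh$ one has $\widehat{\Pi\t} = \hat\Pi\,\hat\t$, where $\hat\Pi$ is the corresponding reference MFE projection onto $\hat\X(\Eh)$. Since $\hat\X(\Eh)$ is finite-dimensional and $\hat\Pi$ is a bounded linear operator on $H^1(\Eh,\M)$, there is a fixed constant $\hat C$, depending only on $\Eh$, with $\|\hat\Pi\hat\t\|_{1,\Eh}\le\hat C\|\hat\t\|_{1,\Eh}$. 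The remaining work is the change-of-variables bookkeeping: because $\Tc_h$ is shape-regular and quasi-uniform, $\|DF_E\|$, $\|DF_E^{-1}\|$, and $J_E$ scale like $h$, $h^{-1}$, and $h^d$ respectively, and for the Piola map $\t^T = \tfrac1{J_E}DF_E\hat\t^T\circ F_E^{-1}$ the standard estimates give $\|\hat\t\|_{0,\Eh}\le C\,h^{1-d/2}\|\t\|_{0,E}$, $|\hat\t|_{1,\Eh}\le C\,h^{2-d/2}|\t|_{1,E}$, and the analogous lower bounds with reversed roles; combining these with the reference bound and absorbing all the matched powers of $h$ yields $\|\Pi\t\|_{1,E}\le C\|\t\|_{1,E}$ with $C$ independent of $h$. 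One subtlety worth noting: the reference bound $\|\hat\Pi\hat\t\|_{1,\Eh}\le\hat C\|\hat\t\|_{1,\Eh}$ requires that $\hat\Pi$ be well-defined on $H^1(\Eh,\M)$, which it is since the $\BDM_1$ degrees of freedom are edge/face moments of the normal trace, and these are bounded on $H^1$; the precise argument is the one in \cite{brezzi1985two,brezzi1991mixed}.

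For \eqref{continuity-l2-proj}, the argument is the same but simpler since $\xi$ is transformed by the trivial change of variables $\xi = \hat\xi\circ F_E^{-1}$ (no Piola twist). The reference $L^2$-projection $\hat Q^{\g}$ onto $\hat\W^1(\Eh)$ is bounded on $H^1(\Eh,\N)$ because the target space is finite-dimensional, giving $\|\hat Q^{\g}\hat\xi\|_{1,\Eh}\le\hat C\|\hat\xi\|_{1,\Eh}$; note the $L^2$-projection commutes with the affine pullback up to the Jacobian weight, so $\widehat{\Qgh\xi}$ is the reference projection of $\hat\xi$ against the weighted inner product, which is still bounded on $H^1(\Eh)$ uniformly in the (bounded, shape-regular) Jacobian. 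Then the scalings $\|\hat\xi\|_{0,\Eh}\sim h^{-d/2}\|\xi\|_{0,E}$ and $|\hat\xi|_{1,\Eh}\sim h^{1-d/2}|\xi|_{1,E}$ (and likewise for $\Qgh\xi$) again cancel exactly, delivering the claim.

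The main obstacle I anticipate is not conceptual but technical: one must be careful that the constant in the reference estimate genuinely depends only on the \emph{shape} of $\Eh$ (which is fixed) and not on $E$, and that the $h$-powers from the Piola transformation of a \emph{matrix} field — which acts row-wise — are tracked correctly for both the $L^2$ part and the $H^1$ seminorm part. The seminorm transforms with an extra factor of $DF_E^{-1}$ from differentiating $F_E^{-1}$, and verifying that this still balances after including the $1/J_E$ factor in the Piola map is the one place a sign error in the exponent of $h$ could creep in; everything else is routine.
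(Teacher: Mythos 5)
Your overall strategy (pull back to the reference element, use finite-dimensionality there, track powers of $h$) is the textbook one, but the key claim that the powers of $h$ ``cancel exactly'' is false, and this is exactly where the argument breaks. Under the Piola map the $L^2$ part and the $H^1$-seminorm part scale with powers of $h$ that differ by one: $\|\hat\t\|_{0,\Eh}\sim h^{d/2-1}\|\t\|_{0,E}$ while $|\hat\t|_{1,\Eh}\sim h^{d/2}|\t|_{1,E}$ (each derivative contributes an extra factor $DF_E^{-1}\sim h^{-1}$; the same mismatch, with exponents $h^{-d/2}$ versus $h^{1-d/2}$, occurs for the affine pullback used for $\Qgh$). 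Consequently, transporting the reference bound $\|\hat\Pi\hat\t\|_{1,\Eh}\le \hat C\|\hat\t\|_{1,\Eh}$ back to $E$ yields only
\begin{equation*}
|\Pi\t|_{1,E}\;\le\; C\bigl(h^{-1}\|\t\|_{0,E}+|\t|_{1,E}\bigr),
\end{equation*}
which is not uniform in $h$. This is the classical reason one cannot prove $H^1$-stability of a projection by pure scaling: the norm equivalence between $\|\cdot\|_{1,E}$ and $\|\cdot\|_{1,\Eh}$ has an $O(h^{-1})$ ``condition number,'' and conjugating a bounded reference operator through it inflates the bound accordingly.

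The missing idea is that $\Pi$ (and $\Qgh$) reproduces constant tensors, so the seminorm estimate should be applied to $\t-\bar\t$ rather than $\t$, after which the Poincar\'e bound $\|\t-\bar\t\|_E\le Ch|\t|_{1,E}$ absorbs the offending $h^{-1}$. This is precisely what the paper does, entirely in physical coordinates and without any reference-element bookkeeping: since $\bar\t$ is constant, $|\Pi\t|_{1,E}=|\Pi\t-\bar\t|_{1,E}$, and $\Pi\t-\bar\t$ is a finite element function, so the inverse inequality \eqref{inverse} gives $|\Pi\t-\bar\t|_{1,E}\le Ch^{-1}\|\Pi\t-\bar\t\|_E\le Ch^{-1}(\|\Pi\t-\t\|_E+\|\t-\bar\t\|_E)\le C\|\t\|_{1,E}$, using \eqref{prop-Pi} with $r=1$ and \eqref{mean-value}; combined with $\|\Pi\t\|_E\le C\|\t\|_{1,E}$ this is \eqref{mixed-proj-continuity}, and \eqref{continuity-l2-proj} is identical. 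You could equally repair your version by performing the mean subtraction on the reference element (a Deny--Lions/Bramble--Hilbert step), but as written the proposal does not close.
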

\begin{proof}
To prove \eqref{mixed-proj-continuity} we write
$$
 |\Pi \t|_{1,E} = |\Pi\t - \bar\t|_{1,E} \le C h^{-1} \|\Pi\t - \bar\t\|_E
 \le C h^{-1}(\|\Pi\t - \t\|_E + \|\t - \bar\t\|_E) \le C \|\t\|_{1,E},
$$
where we have used \eqref{inverse}, \eqref{prop-Pi}, and \eqref{mean-value}. The above inequality, combined with
$\|\Pi\t\|_E \le C \|\t\|_{1,E}$, which follows from \eqref{prop-Pi}, implies \eqref{mixed-proj-continuity}.
The proof of \eqref{continuity-l2-proj} is similar.
\end{proof}
We next derive bounds for quadrature error. We will use the notation $A \in W^{j,\infty}_{\Tc_h}$ if
$A \in W^{j,\infty}(E) \, \forall E \in \Tc_h$ and $\|A\|_{j,\infty,E}$ is uniformly bounded independently of $h$.
\begin{lemma} \label{q-err-h1-lem}
  If $A \in W^{1,\infty}_{\Tc_h}$, there exists a constant $C$ independent of $h$ such that for all $\tau,\chi \in \X_h$,
  $\xi \in \W^1_h$,
\begin{align}
&| \theta(A\chi,\tau) | \le C \sum_{E\in\Tc_h} h\|A\|_{1,\infty,E}\|\chi\|_{1,E}\|\tau\|_{E}, \label{q-err-h1-eq1}\\
&| \del(\t,\xi) | \le C \sum_{E\in\Tc_h} h\|\t\|_{E}\|\xi\|_{1,E} \label{q-err-h1-eq2}, \\
&| \del(\t,\xi) | \le C \sum_{E\in\Tc_h} h\|\t\|_{1,E}\|\xi\|_{E} \label{q-err-h1-eq3}.
\end{align}
\end{lemma}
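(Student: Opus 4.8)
The plan is to prove the three quadrature error bounds by the standard Bramble--Hilbert / scaling technique: transform everything to the reference element $\Eh$, identify a polynomial degree for which the reference quadrature rule is exact, apply the Bramble--Hilbert lemma to bound the reference quadrature error by a seminorm of the integrand, and then scale back to $E$, carefully tracking the powers of $h$ and $J_E$ introduced by the Piola transform on $\tau, \chi$ and the $L^2$-change of variables on $\xi$. Because $A$ is only in $W^{1,\infty}_{\Tc_h}$ (not piecewise polynomial), the estimates lose one power of $h$ relative to the classical $O(h^2)$ quadrature error, which is exactly what the $h^1$ factor in \eqref{q-err-h1-eq1}--\eqref{q-err-h1-eq3} reflects.

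\textbf{Step 1: Reduction to a single element and a bilinear reference estimate.} Since $\theta$ and $\delta$ are defined elementwise, it suffices to bound $|\theta_E(A\chi,\tau)|$ and $|\delta_E(\tau,\xi)|$ for a fixed $E$ and sum. I would first prove an abstract reference-element lemma: if $\sigma_{\hat Q}(\hat f) = \int_{\Eh}\hat f\,d\hat x - (\hat f)_{\hat Q,\Eh}$ denotes the reference vertex-quadrature error, then for $\hat f$ in a suitable space $\sigma_{\hat Q}$ annihilates a polynomial subspace, and Bramble--Hilbert gives $|\sigma_{\hat Q}(\hat f)| \le C |\hat f|_{1,\infty,\Eh}$ or, when $\hat f$ is a product, a product of a low-order seminorm of one factor and an $L^1$/$L^2$-type norm of the other. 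Concretely, for $\hat f = \hat A\,\hat\chi : \hat\tau$ with $\hat\chi,\hat\tau$ linear (the reference $\BDM_1$ components after Piola) and $\hat A$ the pulled-back compliance, the rule is exact whenever $\hat A$ is constant (Lemma~\ref{0-q-err-const} is the constant-tensor version of this); writing $\hat A = \bar{\hat A} + (\hat A - \bar{\hat A})$ and using exactness on the first part, one is left with $\sigma_{\hat Q}((\hat A - \bar{\hat A})\hat\chi:\hat\tau)$, which is controlled by $\|\hat A - \bar{\hat A}\|_{0,\infty,\Eh}\,\|\hat\chi\|_{0,\Eh}\,\|\hat\tau\|_{0,\Eh} \le C|\hat A|_{1,\infty,\Eh}\|\hat\chi\|_{0,\Eh}\|\hat\tau\|_{0,\Eh}$ by \eqref{mean-value}. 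For $\delta$ the integrand $\hat\tau : \hat\xi$ with $\hat\tau$ linear and $\hat\xi$ linear is quadratic, which the vertex rule does \emph{not} integrate exactly, so here I would instead use exactness on the constant part of one factor (Lemma~\ref{0-q-err-const} again) and bound $\sigma_{\hat Q}(\hat\tau:(\hat\xi - \bar{\hat\xi}))$ by $\|\hat\tau\|_{0,\Eh}|\hat\xi|_{1,\Eh}$, or symmetrically with the roles of $\tau$ and $\xi$ swapped — this is precisely why \eqref{q-err-h1-eq2} and \eqref{q-err-h1-eq3} appear as a symmetric pair.

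\textbf{Step 2: Scaling back.} I would then push the reference estimates back to $E$ using the mappings $\tau^T = \tfrac1{J_E}DF_E\hat\tau^T\circ F_E^{-1}$, $\xi = \hat\xi\circ F_E^{-1}$, the standard equivalences $\|DF_E\| \sim h$, $\|DF_E^{-1}\| \sim h^{-1}$, $J_E \sim h^d$, and $\|\hat A\|_{j,\infty,\Eh} \sim \|A\|_{j,\infty,E}$ (with $|\hat A|_{1,\infty,\Eh} \sim h|A|_{1,\infty,E}$). The Piola scaling contributes $\|\hat\tau\|_{0,\Eh} \sim h\,J_E^{-1/2}\|\tau\|_E$, so the two stress factors together contribute $h^2 J_E^{-1}$, while $d\hat x = J_E^{-1}dx$ in the integral contributes $J_E$; combined with the extra $h$ from $|\hat A|_{1,\infty,\Eh}$ this gives the net $h^1\|A\|_{1,\infty,E}\|\chi\|_{E}\|\tau\|_{E}$ once one also checks $\|\chi\|_{1,E}$ can replace $\|\chi\|_E$ on a finite element function (they are equivalent up to $h^{-1}$, but here the inverse inequality is not even needed — $\|\chi\|_E \le \|\chi\|_{1,E}$ suffices). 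For $\delta$, one stress factor and one rotation factor: $\|\hat\tau\|_{0,\Eh}\sim h J_E^{-1/2}\|\tau\|_E$, $|\hat\xi|_{1,\Eh}\sim h\,J_E^{-1/2}\,h^{-1}\cdot(\text{stuff})$ — more carefully $|\hat\xi|_{1,\Eh} \le C\|DF_E\|\,J_E^{-1/2}|\xi|_{1,E} \sim h\,J_E^{-1/2}|\xi|_{1,E}$, so the product with $J_E$ from the integral gives $h\,\|\tau\|_E|\xi|_{1,E} \le h\,\|\tau\|_E\|\xi\|_{1,E}$, and the symmetric choice gives \eqref{q-err-h1-eq3}.

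\textbf{The main obstacle} is bookkeeping the powers of $h$ and $J_E$ through the Piola transform so that the loss is exactly one power of $h$ and no more — in particular keeping track of the fact that the reference quadrature error must be measured in the right seminorm (first-order, because $A$ is only Lipschitz) rather than the second-order seminorm one would get if $A$ were smooth, and deciding on which factor to "spend" the mean-value subtraction in the $\delta$ estimates to land on the two stated forms \eqref{q-err-h1-eq2}--\eqref{q-err-h1-eq3}. Once the reference bilinear estimates are set up with the correct seminorms, the scaling is routine and follows the template of \cite[Corollary 2.5]{wheeler2006multipoint} already cited in the proof of Lemma~\ref{quad-inner-prod}.
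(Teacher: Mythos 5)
Your overall strategy---split off mean values, use exactness of the vertex rule on the part whose integrand becomes a polynomial of degree at most one, and bound the remainders via the boundedness of the quadrature form together with the $O(h)$ approximation property of the mean value---is the right one, and is essentially what the paper does (the paper executes it directly on the physical element $E$ using Lemma~\ref{0-q-err-const}, Lemma~\ref{quad-inner-prod} and \eqref{mean-value}, with no reference-element Bramble--Hilbert argument needed at this first-order level). However, your treatment of \eqref{q-err-h1-eq1} contains a genuine error: you claim that after writing $\hat A = \bar{\hat A} + (\hat A - \bar{\hat A})$ the vertex rule is exact on $\bar{\hat A}\hat\chi:\hat\tau$ because $\bar{\hat A}$ is constant. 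This is false: with $\hat\chi$ and $\hat\tau$ both linear, $\bar{\hat A}\hat\chi:\hat\tau$ is a \emph{quadratic} polynomial, and the vertex rule is exact only for linears (on the reference triangle $\int \hat x^2\,d\hat x = 1/12$ while the rule gives $1/6$). Lemma~\ref{0-q-err-const} gives $\tet_E(\chi,\tau_0)=0$ only when \emph{one of the two tensor factors} is constant, not when the coefficient $A$ is constant. The bound this step would produce, $Ch|A|_{1,\infty,E}\|\chi\|_E\|\tau\|_E$, is in fact a false statement: take $A$ constant and $\chi=\tau$ a nonconstant element of $\X_h(E)$, so the right side vanishes while $\tet_E(\chi,\chi)\neq 0$. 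It is no accident that the lemma's right-hand side carries $\|\chi\|_{1,E}$ rather than $\|\chi\|_E$.

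The repair is exactly the device you already use for $\del$: subtract the mean of $\chi$ as well, writing $\tet_E(A\chi,\tau) = \tet_E((A-\bar A)\chi,\tau) + \tet_E(\bar A(\chi-\bar\chi),\tau) + \tet_E(\bar A\bar\chi,\tau)$. The last term vanishes by Lemma~\ref{0-q-err-const} (the constant tensor $\bar A\bar\chi$ against $\tau\in\X_h(E)$), the first is bounded by $Ch|A|_{1,\infty,E}\|\chi\|_E\|\tau\|_E$, and the second by $Ch\|A\|_{0,\infty,E}|\chi|_{1,E}\|\tau\|_E$, using \eqref{mean-value} and the boundedness of $(\cdot,\cdot)_{Q,E}$ on finite element functions; summing over $E$ gives \eqref{q-err-h1-eq1}. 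Your arguments for \eqref{q-err-h1-eq2}--\eqref{q-err-h1-eq3} are correct, and the choice of which factor receives the mean-value subtraction indeed produces the symmetric pair. The reference-element scaling is an unnecessary detour here, since all ingredients are available directly on $E$; as a minor point, your Piola scaling $\|\hat\tau\|_{0,\Eh}\sim h J_E^{-1/2}\|\tau\|_E$ is also off in 3D (it should be $J_E^{1/2}h^{-1}\|\tau\|_E$), although such powers cancel in the final estimates.
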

\begin{proof}
  For \eqref{q-err-h1-eq1} we write on any element $E$, using Lemma~\ref{0-q-err-const}, Lemma~\ref{quad-inner-prod},
  and \eqref{mean-value},
\begin{align*}
     | \tet_E(A\chi,\t) | \le | \tet_E\inp[(A-\bar{A})\chi]{\t} | + | \tet_E\inp[\bar{A}(\chi - \bar\chi)]{\t}| 
\le Ch(|A|_{1,\infty,E}\|\chi\|_E \|\t\|_E + \|A\|_{0,\infty,E}\|\chi\|_{1,E}\|\t\|_{E}).
\end{align*}
Similarly, using Lemma~\ref{0-q-err-const}, Lemma~\ref{quad-inner-prod},
  and \eqref{mean-value}, we have 
\begin{align*}
     | \del_E\inp[\t]{\xi} | = | \del_E\inp[\t]{\xi - \bar{\xi}} | 
\le Ch \|\t\|_E\|\xi\|_{1,E} 
     \quad\mbox{and} \quad
        | \del_E\inp[\t]{\xi} | = | \del_E\inp[\t - \bar{\t}]{\xi} | 
\le Ch \|\t\|_{1,E}\|\xi\|_{E} .
\end{align*}
The proof is completed by summing over the elements.
\end{proof}



\subsection{First order convergence for all variables}
\begin{theorem}
Let $A\in W^{1,\infty}_{\Tc_h}$.
For the solution $(\s,u,\g)$ of \eqref{weak-elast} and its numerical approximation
$(\s_h,u_h,\g_h)$ obtained by either the MSMFE-0 method 
\eqref{msmfe-0-1}--\eqref{msmfe-0-3} or the MSMFE-1 method 
\eqref{msmfe-1-1}--\eqref{msmfe-1-3}, there exists a
constant $C$ independent of $h$ such that
\begin{align} \label{error-estimate}
\|\sigma-\sigma_h\|_{\dvrg} + \| u-u_h\|+\| \g-\g_h\|  \leq Ch(\|\sigma\|_1 + \|\dvr \sigma\|_1 + \|u\|_1+\|\g\|_1).
\end{align}
\end{theorem}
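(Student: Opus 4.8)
The plan is to follow the standard mixed finite element error analysis based on the Babu\v{s}ka--Brezzi theory, but with careful tracking of the quadrature perturbations via Lemma~\ref{q-err-h1-lem}. First I would write the error equations. Subtracting the MSMFE scheme \eqref{msmfe-0-1}--\eqref{msmfe-0-3} (resp.\ \eqref{msmfe-1-1}--\eqref{msmfe-1-3}) from the weak formulation \eqref{weak-elast} tested against discrete functions, and splitting the errors through the projection operators $\Pi$, $\Quh$, $\Qgh$, I set $e_\sigma = \Pi\sigma - \sigma_h$, $e_u = \Quh u - u_h$, $e_\gamma = \Qgh\gamma - \gamma_h$. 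Using \eqref{space-div-prop}, \eqref{proj-prop-1}, \eqref{proj-prop-2} and the identity $(\xi,\tau)=(\asym\tau,\Xi^{-1}\xi)$ from \eqref{asym-identity}, the consistency terms reduce to approximation errors $(\sigma - \Pi\sigma,\cdot)$, $(\gamma - \Qgh\gamma,\cdot)$ together with the quadrature error terms $\theta(A\Pi\sigma,\tau)$ and (for MSMFE-1) $\delta(\tau,\Qgh\gamma)$, $\delta(\sigma_h,\xi)$. This yields a perturbed saddle point problem for $(e_\sigma,e_u,e_\gamma)$ whose right-hand side is controlled, via \eqref{prop-Pi}, \eqref{prop-Qgh} and Lemma~\ref{q-err-h1-lem} (using also Lemma~\ref{proj-continuity} to bound $\|\Pi\sigma\|_{1,E}$ and $\|\Qgh\gamma\|_{1,E}$), by $Ch(\|\sigma\|_1 + \|\gamma\|_1)$ times the norm of the test functions.

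Next I invoke the discrete stability of the scheme. By the coercivity conditions \ref{S1}/\ref{S3} and the inf-sup conditions \ref{S2}/\ref{S4}, whose validity is established in Theorem~\ref{msmfe-stability-0} and Theorem~\ref{MSMFE1-well-posed}, the perturbed saddle-point system is uniformly well posed on $\X_h \times V_h \times \W_h^k$. The abstract Brezzi theory then gives
\begin{equation*}
\|e_\sigma\|_{\dvrg} + \|e_u\| + \|e_\gamma\| \le C\,\big(\text{data on the right-hand side of the error equations}\big) \le Ch(\|\sigma\|_1 + \|\dvr\sigma\|_1 + \|\gamma\|_1).
\end{equation*}
Here the $\|\dvr\sigma\|_1$ contribution enters through $\|\dvr(\sigma - \Pi\sigma)\|$ via \eqref{prop-div}, which is needed because $e_u$ is controlled in the inf-sup argument through the divergence pairing; note also that $\dvr e_\sigma = \Quh(\dvr\sigma) - \dvr\sigma_h = \Quh f - \dvr\sigma_h$ can be handled directly from \eqref{msmfe-0-2} (resp.\ \eqref{msmfe-1-2}), giving $\dvr\sigma_h = \Quh f$ and hence $\|\dvr(\sigma-\sigma_h)\| = \|\dvr\sigma - \Quh\dvr\sigma\| \le Ch\|\dvr\sigma\|_1$ by \eqref{prop-Quh}, which actually bounds the divergence part of the stress error without reference to stability at all.

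Finally, I combine with the triangle inequality and the approximation bounds \eqref{prop-Quh}--\eqref{prop-div}: writing $\sigma - \sigma_h = (\sigma - \Pi\sigma) + e_\sigma$, and similarly for $u$ and $\gamma$, all six pieces are $O(h)$ with the stated norms, which yields \eqref{error-estimate}. The main obstacle I anticipate is the bookkeeping for the quadrature error terms in the MSMFE-1 case: one must show that the extra terms $\delta(\tau,\Qgh\gamma)$ and $\delta(\sigma_h,\xi)$ are genuinely higher order, which requires applying Lemma~\ref{q-err-h1-lem} with the right factor placed in the $H^1$ slot (using $\delta_E(\tau_0,\xi)=0$ for constant $\tau_0$, so that $\delta(\sigma_h,\xi)$ is rewritten against $\sigma_h - \overline{\sigma_h}$ or, better, against $\sigma_h - \Pi\sigma$ plus controllable terms) and care that $\|\xi\|_{1,E}$ for $\xi = \Qgh\gamma \in \W_h^1$ stays bounded by $\|\gamma\|_{1,E}$ via \eqref{continuity-l2-proj} and the inverse inequality \eqref{inverse}. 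A secondary subtlety is that the inf-sup construction in \ref{S4} uses the quadrature inner product $(\cdot,\cdot)_Q$ rather than $(\cdot,\cdot)$, so one must consistently test the rotation equation with $(\cdot,\cdot)_Q$ and absorb the difference $\delta(\cdot,\cdot)$ into the quadrature error estimates — but this is exactly what Lemma~\ref{q-err-h1-lem} is designed to handle.
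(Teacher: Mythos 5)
Your proposal is correct and follows essentially the same route as the paper: the same projection-based error equations, the same use of Lemma~\ref{q-err-h1-lem} together with the $H^1$-stability of $\Pi$ and $\Qgh$ to control the quadrature perturbations, the same direct treatment of the divergence error via $\dvr(\Pi\sigma-\sigma_h)=0$, and stability via \ref{S1}/\ref{S3} and \ref{S2}/\ref{S4}. The only difference is presentational — you invoke the abstract Brezzi estimates for the perturbed saddle-point system, whereas the paper carries out that step by hand (testing with $\tau=\Pi\sigma-\sigma_h$, $\xi=\Qgh\gamma-\gamma_h$ and absorbing the $\epsilon$-terms before applying the inf-sup condition) — and your remarks on relocating $\del(\sigma_h,\xi)$ onto $\Pi\sigma$ match exactly what the paper does.
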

\begin{proof}
We present the argument for the MSMFE-1 method, which includes the
proof for the MSMFE-0 method, as noted below. Subtracting the
numerical method \eqref{msmfe-1-1}-\eqref{msmfe-1-3} from the
variational formulation \eqref{weak-elast}, we obtain the error
equations
\begin{align}
(A\sigma,\t) - (A\s_h,\t)_Q + (u-u_h,\dvr\tau) + (\g, \tau) - (\g_h, \tau)_Q&= 0, \quad \tau \in \X_h, \label{err-m1-1}\\
(\dvr(\sigma - \sigma_h), v) &= 0, \quad v \in V_h, \label{err-m1-2}\\
(\sigma, \xi) - (\sigma_h, \xi)_Q &= 0, \quad \xi \in \W_h^1. \label{err-m1-3}
\end{align}
Using \eqref{space-div-prop}, \eqref{quad-err-def}, \eqref{proj-prop-1}, and that $\dvr \X_h = V_h$,
we can rewrite the above error system as 
\begin{align}
&(A(\Pi\sigma - \sigma_h),\tau)_Q  + (\Quh u-u_h,\dvr\tau)+ (\tau, \Qgh \g-\g_h)_Q \nonumber\\
& \qquad\quad = \inp[A(\Pi\s-\s)]{\t} - \tet\inp[A\Pi\s]{\t} 
+ \inp[\t]{\Qgh\g - \g} - \del\inp[\t]{\Qgh\g}, \label{eq1-err-msmfe1} \\
&    \dvr(\Pi\sigma - \sigma_h) = 0 \label{eq2-err-msmfe1}, \\
&    (\Pi\sigma-\sigma_h, \xi)_Q = \inp[\Pi\s-\s]{\xi} - \del\inp[\Pi\s]{\xi}. \label{eq3-err-msmfe1}
\end{align}
We proceed by giving bounds for the terms on the right in 
\eqref{eq1-err-msmfe1} and \eqref{eq3-err-msmfe1}, using
Cauchy-Schwarz and Young's inequalities. Bound \eqref{prop-Pi} yields
\begin{align} \label{bound1-msmfe0}
\inp[A(\Pi\s - \s)]{\t} + \inp[\Pi\s-\s]{\xi} 
\le Ch\|\s\|_1 (\|\t\| + \|\xi\|) \le Ch^2\|\s\|^2_1 +  \epsilon\|\t\|^2 +  \epsilon\|\xi\|^2.
\end{align}
It follows from \eqref{q-err-h1-eq1} and \eqref{mixed-proj-continuity} that
\begin{align} \label{bound2-msmfe0}
|\tet\inp[A\Pi\s]{\t}| \le C\sum_{E\in \Tc_h} h\|A\|_{1,\infty,E}\|\Pi\s\|_{1,E}\|\t\|_E \le Ch\|A\|_{1,\infty}\|\s\|_1\|\t\| \le Ch^2\|\s\|_1^2 + \epsilon\|\t\|^2.
\end{align}
It follows from \eqref{prop-Qgh} and \eqref{prop-Pi} that
\begin{align}
&\inp[\t]{\Qgh \g - \g} \le Ch\|\t\| \|\g\|_1 \le Ch^2\|\g\|^2_1 + \epsilon\|\t\|^2 \label{bound6-msmfe1}.
\end{align}
Using \eqref{q-err-h1-eq2}--\eqref{q-err-h1-eq3} and \eqref{mixed-proj-continuity}--\eqref{continuity-l2-proj}, 
we obtain
\begin{align} 
    &|\del\inp[\t]{\Qgh\g}| \le C\sum_{E\in\Tc_h} h\|\Qgh\g\|_{1,E}\|\t\|_E \le Ch\|\g\|_1\|\t\| \le Ch^2\|\g\|^2_1 + \epsilon\|\t\|^2, \label{bound5-msmfe1}\\
	&|\del\inp[\Pi\s]{\xi}| \le C\sum_{E\in\Tc_h}h\|\Pi\s\|_{1,E} \|\xi\|_E \le Ch\|\s\|_1 \|\xi\| \le Ch^2\|\s\|^2_1 +  \epsilon\|\xi\|^2 \label{bound7-msmfe1}.
\end{align}
Now, choosing $\t = \Pi\s - \s_h$ and $\xi = \Qgh \g - \g_h$ in \eqref{eq1-err-msmfe1} and \eqref{eq3-err-msmfe1},
and using \eqref{eq2-err-msmfe1}, gives 
\begin{align*}
    &(A(\Pi\sigma - \sigma_h),\Pi\s - \s_h)_Q =\inp[A(\Pi\s-\s)]{\Pi\s - \s_h} - \tet\inp[A\Pi\s]{\Pi\s - \s_h} \\
    &\qquad + \inp[\Pi\s - \s_h]{\Qgh\g-\g} - \del\inp[\Pi\s-\s_h]{\Qgh\g} 
    - \inp[\Pi\s-\s]{\Qgh \g - \g_h} + \del\inp[\Pi\s]{\Qgh \g - \g_h}.
\end{align*}
Combining \eqref{bound1-msmfe0}--\eqref{bound7-msmfe1}, using \eqref{norm-equiv}, and choosing 
$\epsilon$ small enough, we obtain
\begin{align} \label{stress-bound-msmfe1}
    \| \Pi \s - \s_h \|^2 \le Ch^2( \| \s \|_1^2 + \| \g \|_1^2) + \epsilon \| \Qgh \g - \g_h\|^2.
\end{align}
Using the inf-sup condition \ref{S4}, we have
\begin{align*}
&\|\Quh u-u_h\|+\|\Qgh \g-\g_h\| \nonumber \\ 
& \qquad \leq C \sup_{\tau \in \X_h}\frac{1}{\|\tau\|_{\dvrg}}
\big( \inp[A(\Pi\s-\s)]{\t} - \inp[A(\Pi\s-\s_h)]{\t}_Q - \tet\inp[A\Pi\s]{\t} - \del\inp[\t]{\Qgh\g} \big) 
\nonumber\\
& \qquad  \leq C\left( \|\Pi\sigma -\sigma\| +\|\Pi\sigma -\sigma_h\| + h\|\sigma\|_1 + h\|\g\|_1 \right) \nonumber  \\
& \qquad  \leq C\left( h\|\sigma\|_1 + h\|\g\|_1 + \epsilon\|\Qgh\g-\g_h\| \right),
\end{align*}
where we used \eqref{prop-Pi}, \eqref{q-err-h1-eq1}, \eqref{q-err-h1-eq3}, and  \eqref{stress-bound-msmfe1}.
Choosing $\epsilon$ small enough in the above, we obtain
\begin{align}
    \|\Quh u-u_h\|+\|\Qgh \g-\g_h\| \le Ch(\|\sigma\|_1 + h\|\g\|_1), \label{bound-displ-rotat}
\end{align}
which, combined with \eqref{stress-bound-msmfe1}, gives
\begin{align}
    \|\Pi\s-\s_h\| \le Ch(\|\s\|_1 + \|\g\|_1).\label{bound-l2-stress}
\end{align}
Also, using \eqref{eq2-err-msmfe1} and \eqref{prop-div} we get
\begin{align}
\|\dvrg(\s-\s_h)\| \le \|\dvr(\Pi\s-\s)\|\le Ch\|\dvr\s\|_1. \label{bound-div}
\end{align}
The assertion of the theorem for the MSMFE-1 method follows from combining
\eqref{bound-displ-rotat}--\eqref{bound-div} and using
\eqref{prop-Quh}--\eqref{prop-Qgh}. The proof the MSMFE-0 method follows
from the above argument by omitting the
quadrature error terms $\del(\cdot,\cdot)$ in
\eqref{eq1-err-msmfe1}--\eqref{eq3-err-msmfe1}. 
\end{proof}
{\color{black}
\begin{remark}
The error analysis for the modified MSMFE-1 method 
\eqref{scaled-msmfe-1-1}--\eqref{scaled-msmfe-1-3} based on the scaled
rotation $\tilde \gamma = A^{-1}\gamma$ follows along
the same lines. The resulting error estimate is 
\begin{align} \label{error-estimate-scaled}
\|\sigma-\sigma_h\|_{\dvrg} + \| u-u_h\|+\| \tilde\g-\tilde\g_h\|  
\leq Ch(\|\sigma\|_1 + \|\dvr \sigma\|_1 + \|u\|_1+\|\tilde\g\|_1).
\end{align}
This bound indicates that the modified method has an advantage if
$\tilde\gamma$ is smoother than $\gamma$, which is the case when
$A$ is discontinuous.
\end{remark}
}


\subsection{Second order convergence for the displacement}
We next prove superconvergence for the displacement. 
The following bounds on the quadrature error will be used in the analysis.
\begin{lemma} \label{q-err-h2-lem}
Let $A\in W^{2,\infty}_{\Tc_h}$. There exists a constant $C$ independent of $h$ such that
for all $\c,\t \in \X_h$, 
    \begin{align}
        | \tet\inp[A\c]{\t} | \le C\sum_{E\in\Tc_h} h^2\|\c\|_{1,E}\|\t\|_{1,E}, \label{q-err-h2-1}
    \end{align}
and for all $\xi \in \W^1_h$,
    \begin{align}
        | \del\inp[\t]{\xi} | \le C\sum_{E\in\Tc_h} h^2\|\t\|_{1,E}\|\xi\|_{1,E}. \label{q-err-h2-2}
    \end{align}
\end{lemma}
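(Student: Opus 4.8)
The plan is to prove the two quadrature-error bounds by localizing to a single element $E$, passing to the reference element $\Eh$ via the Piola and affine transformations, and there exploiting the exactness of the vertex quadrature rule on polynomials of degree at most one (Lemma~\ref{0-q-err-const}) together with a Bramble--Hilbert / Taylor-expansion argument, before scaling back. Concretely, for \eqref{q-err-h2-1} I would first reduce to the case of a piecewise-smooth $A$ and, on each $E$, write $A = \bar A + (A - \bar A)$ and peel off constants as in the proof of Lemma~\ref{q-err-h1-lem}, but now carrying the argument to second order: since $\tet_E$ annihilates products where one factor is constant (Lemma~\ref{0-q-err-const}), the surviving contributions involve either the deviation $A - \bar A$ paired with deviations of $\c$ and $\t$ from their means, or $\bar A$ paired with $(\c - \bar\c)$ and $(\t - \bar\t)$. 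Using $\|A - \bar A\|_{\infty,E} \le Ch\|A\|_{1,\infty,E}$, $\|\c - \bar\c\|_E \le Ch\|\c\|_{1,E}$, $\|\t - \bar\t\|_E \le Ch\|\t\|_{1,E}$ from \eqref{mean-value}, and the continuity of the quadrature bilinear form from Lemma~\ref{quad-inner-prod}, each such term is bounded by $Ch^2\|\c\|_{1,E}\|\t\|_{1,E}$ (the term with second derivatives of $A$ uses $A \in W^{2,\infty}_{\Tc_h}$). Summing over $E$ gives \eqref{q-err-h2-1}.

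For \eqref{q-err-h2-2} the structure is the same but simpler since there is no coefficient: on each $E$ I would use that $\del_E(\t_0,\xi) = 0$ for constant $\t_0$ and $\del_E(\t,\z_0) = 0$ for constant skew $\z_0$ (Lemma~\ref{0-q-err-const}), so $\del_E(\t,\xi) = \del_E(\t - \bar\t, \xi - \bar\xi)$; then the continuity estimate $(\cdot,\cdot)_{Q,E} \le C\|\cdot\|_E\|\cdot\|_E$ (a local version of the bound in Lemma~\ref{quad-inner-prod}) combined with $\|\t - \bar\t\|_E \le Ch\|\t\|_{1,E}$ and $\|\xi - \bar\xi\|_E \le Ch\|\xi\|_{1,E}$ yields $|\del_E(\t,\xi)| \le Ch^2\|\t\|_{1,E}\|\xi\|_{1,E}$. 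Summing over the elements completes the proof.

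The main subtlety — which I would want to handle carefully rather than the routine scaling — is that $\t,\c \in \X_h$ are Piola-transformed $\BDM_1$ tensors, so on a general (non-affine, but here affine-simplicial) element they are genuinely linear polynomials and the mapped quadrature on $E$ agrees with the reference-element quadrature up to the Jacobian factor; one must check that the constants in the mean-value and norm-equivalence estimates depend only on shape-regularity, which holds by quasi-uniformity of $\Tc_h$. A second point to verify is that when both factors are subtracted to their means we are still free to apply Lemma~\ref{0-q-err-const} in the form needed: the decomposition $\del_E(\t,\xi) = \del_E(\t - \bar\t,\xi) = \del_E(\t - \bar\t,\xi - \bar\xi)$ uses linearity of $\del_E$ in each argument and both vanishing properties, which is exactly what Lemma~\ref{0-q-err-const} supplies. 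With these observations the estimates follow by the same pattern as Lemma~\ref{q-err-h1-lem}, simply gaining one extra power of $h$ from the second subtracted factor.
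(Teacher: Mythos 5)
Your argument for \eqref{q-err-h2-2} is correct and coincides with the paper's: by Lemma~\ref{0-q-err-const}, $\del_E\inp[\t]{\xi}=\del_E\inp[\t-\bar\t]{\xi-\bar\xi}$, and the local continuity of $(\cdot,\cdot)_{Q,E}$ together with \eqref{mean-value} gives the $h^2$ factor. The gap is in \eqref{q-err-h2-1}, and it stems from the assertion that ``$\tet_E$ annihilates products where one factor is constant.'' Lemma~\ref{0-q-err-const} gives this only when the \emph{other} factor is a polynomial of degree at most one, so that the integrand is linear; it is false for $A\c_0$ with a variable coefficient $A$ and a constant tensor $\c_0$. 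Expanding $A\c:\t$ with $A=\bar A+(A-\bar A)$, $\c=\bar\c+(\c-\bar\c)$, $\t=\bar\t+(\t-\bar\t)$, every resulting term is either quadrature-exact or a product of two $O(h)$ deviations and hence $O(h^2)$ as you argue --- except the term $\tet_E\inp[(A-\bar A)\bar\c]{\bar\t}=\tet_E\inp[A\bar\c]{\bar\t}$, which fits neither of the two categories in your enumeration. For that term the naive estimate $\|A-\bar A\|_{0,\infty,E}\le Ch\|A\|_{1,\infty,E}$ yields only $O(h)$, which would degrade the whole bound to first order.

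This is precisely the term for which the paper invokes the Bramble--Hilbert lemma: since the vertex quadrature-error functional vanishes on linear integrands, one gets $|\tet_E\inp[A\bar\c]{\bar\t}|\le Ch^2|A\bar\c|_{2,E}\,\|\bar\t\|_E\le Ch^2|A|_{2,\infty,E}\|\c\|_E\|\t\|_E$, and this is the only place the hypothesis $A\in W^{2,\infty}_{\Tc_h}$ enters. You gesture at Bramble--Hilbert in your opening plan and at ``the term with second derivatives of $A$'' in a parenthesis, but your stated decomposition excludes that term and you never carry out the estimate, so as written the argument for \eqref{q-err-h2-1} does not close. The remaining ingredients (affine scaling, shape-regularity dependence of constants, the reduction $\del_E(\t,\xi)=\del_E(\t-\bar\t,\xi-\bar\xi)$) are handled correctly.
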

\begin{proof}
On any element $E$, using Lemma \ref{0-q-err-const} we have
    \begin{align*}
        \tet_E\inp[A\c]{\t} &= \tet_E\inp[(A-\bar{A})(\c - \bar{\c})]{\t} + \tet_E\inp[(A-\bar{A})\bar{\c}]{\t - \bar{\t}} + \tet_E\inp[A\bar{\c}]{\bar{\t}} + \tet_E\inp[\bar{A}(\c-\bar{\c})]{\t - \bar{\t}} \equiv \sum_{j=1}^4I_j.
    \end{align*}
Using \eqref{mean-value}, we obtain
$$
I_1 + I_2 + I_4 \le C h^2\|A\|_{1,\infty,E}\|\c\|_{1,E}\|\t\|_{1,E},
$$
while, using that the quadrature rule is exact for linears, the 
Bramble-Hilbert lemma \cite{ciarlet2002finite} gives
\begin{align}
|\tet_E\inp[A\bar{\c}]{\bar{\t}}| \le C h^2|A\bar\c|_{2,E}\|\bar\t\|_E
\le Ch^2|A|_{2,\infty, E}\|\c\|_E\|\t\|_E,
\end{align}
which implies \eqref{q-err-h2-1}. Similarly, using Lemma \ref{0-q-err-const} and
\eqref{mean-value}, we have
$$
\del_E\inp[\t]{\xi} = \del_E\inp[\t - \bar{\t}]{\xi - \bar{\xi}}
\le Ch^2\|\t\|_{1,E}\|\xi\|_{1,E},
$$
which implies \eqref{q-err-h2-2}.
\end{proof}

The superconvergence proof is based on a duality argument. We consider the 
auxiliary problem
    \begin{align}
    \begin{aligned}
        & \psi = A^{-1}\epsilon(\phi), \quad 
        \dvrg \psi = (\Quh u - u_h)  \quad \mbox {in } \O, \\
        & \phi = 0                    \quad \mbox{on } \Gd, \quad
        \psi\,n = 0                   \quad \mbox{on } \Gn, \label{aux}
    \end{aligned}
    \end{align}
and assume that it is $H^2$-elliptic regular:
	\begin{align}
	    \|\phi\|_2 \le \|\Quh u - u_h\| \label{el-reg}.
	\end{align}
Sufficient conditions for \eqref{el-reg} can be found in 
\cite{grisvard1985elliptic,lions1972non,ciarlet2002finite}.

\begin{theorem}\label{sc-p0}
Let $A\in W^{2,\infty}_{\Tc_h}$ and $A^{-1} \in W^{1,\infty}_{\Tc_h}$.
Assuming $H^2$-elliptic regularity \eqref{el-reg},
then for the MSMFE-0 and MSMFE-1 methods, there
exists a constant C independent of $h$ such that
\begin{align}
 \|\Quh u-u_h\| \le C h^2\left( \|\s\|_1 + \|\g\|_1 + \|\dvr\s\|_1 \right).
    \end{align}
\end{theorem}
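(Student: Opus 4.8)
The plan is a duality argument built on the auxiliary problem~\eqref{aux}. Set $e_\s = \Pi\s - \s_h$, $e_u = \Quh u - u_h$, $e_\g = \Qgh\g - \g_h$, and let $\rho = \skew(\nabla\phi)$, so that the mixed weak form of~\eqref{aux} reads $\inp[A\psi]{\t} + \inp[\phi]{\dvr\t} + \inp[\rho]{\t} = 0$ for all $\t\in\X$, $\inp[\dvr\psi]{v} = \inp[e_u]{v}$ for all $v\in V$, and $\inp[\psi]{\xi} = 0$ for all $\xi\in\W$ (the last since $\psi = A^{-1}\epsilon(\phi)$ is symmetric). As $A^{-1}\in W^{1,\infty}_{\Tc_h}$ and~\eqref{el-reg} holds, $\psi\in H^1(\O,\M)$ with $\psi\,n=0$ on $\Gn$, so $\Pi\psi\in\X_h$ is admissible; since $\dvr\psi = e_u\in V_h$, property~\eqref{space-div-prop} gives $\|e_u\|^2 = \inp[e_u]{\dvr\Pi\psi}$. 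Testing the error equation~\eqref{eq1-err-msmfe1} with $\t=\Pi\psi$ then yields
$$\|e_u\|^2 = R_1(\Pi\psi) - \inp[Ae_\s]{\Pi\psi}_Q - \inp[\Pi\psi]{e_\g}_Q,$$
where $R_1(\t) = \inp[A(\Pi\s-\s)]{\t} - \tet\inp[A\Pi\s]{\t} + \inp[\t]{\Qgh\g-\g} - \del\inp[\t]{\Qgh\g}$. I would then bound every contribution on the right by $Ch^2(\|\s\|_1 + \|\g\|_1 + \|\dvr\s\|_1)\|e_u\|$ and divide by $\|e_u\|$.

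The crucial manipulation concerns $\inp[Ae_\s]{\Pi\psi}_Q$. Testing the dual constitutive equation with $\t=e_\s$ and using $\dvr e_\s = 0$ from~\eqref{eq2-err-msmfe1} gives $\inp[Ae_\s]{\psi} = -\inp[\rho]{e_\s}$, whence, by symmetry of $A$, $\inp[Ae_\s]{\Pi\psi}_Q = -\inp[\rho]{e_\s} + \inp[A(\Pi\psi-\psi)]{e_\s} - \tet\inp[A\Pi\psi]{e_\s}$. Inside $R_1(\Pi\psi)$ I would split $\inp[A(\Pi\s-\s)]{\Pi\psi} = \inp[A(\Pi\s-\s)]{\Pi\psi-\psi} + \inp[\Pi\s-\s]{\epsilon(\phi)}$ and, using $\epsilon(\phi) = \nabla\phi - \rho$, integrate by parts $\inp[\Pi\s-\s]{\nabla\phi} = -\inp[\dvr(\Pi\s-\s)]{\phi}$ (the boundary terms vanish because $\phi=0$ on $\Gd$ and $(\Pi\s-\s)n=0$ on $\Gn$). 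The two $\rho$-contributions $-\inp[\Pi\s-\s]{\rho}$ and $+\inp[\rho]{e_\s}$ then combine into $\inp[\rho]{\s-\s_h} = -\inp[\rho]{\s_h}$, since $\rho$ is skew and $\s$ symmetric; this is the point where the only $O(h)$ terms become higher order.

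It remains to bound $-\inp[\rho]{\s_h}$ and the leftover terms. Splitting $-\inp[\rho]{\s_h} = -\inp[\rho-\Qgh\rho]{\s_h} - \inp[\Qgh\rho]{\s_h}$, the second piece vanishes for MSMFE-0 (since $\inp[\s_h]{\xi}=0$ for $\xi\in\W_h^0$) and, for MSMFE-1, equals $\del\inp[\Pi\s]{\Qgh\rho} - \del\inp[e_\s]{\Qgh\rho}$ (using $\inp[\s_h]{\Qgh\rho}_Q=0$ and $\s_h = \Pi\s - e_\s$), which Lemma~\ref{q-err-h2-lem}, Lemma~\ref{proj-continuity}, the inverse inequality~\eqref{inverse} and~\eqref{bound-l2-stress} control by $Ch^2(\|\s\|_1+\|\g\|_1)\|e_u\|$ --- splitting $\s_h$ this way is what avoids the $h^{-1}$ growth of $\|\s_h\|_1$. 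For $-\inp[\rho-\Qgh\rho]{\s_h}$, write $\s_h = \s + (\Pi\s-\s) - e_\s$, discard $\inp[\rho-\Qgh\rho]{\s}=0$ (skew against symmetric), and bound the rest by $\|\rho-\Qgh\rho\|(\|\Pi\s-\s\| + \|e_\s\|) \le Ch^2(\|\s\|_1+\|\g\|_1)\|e_u\|$ via~\eqref{prop-Qgh},~\eqref{prop-Pi},~\eqref{bound-l2-stress} and $\|\rho\|_1\le C\|\phi\|_2\le C\|e_u\|$. Every remaining term is $O(h^2)$ times a Sobolev norm times $\|e_u\|$: the integration-by-parts term $-\inp[\dvr(\Pi\s-\s)]{\phi} = \inp[(I-\Quh)\dvr\s]{(I-\Quh)\phi}$ is $\le Ch^2\|\dvr\s\|_1\|e_u\|$; $\inp[\Pi\psi]{\Qgh\g-\g} = \inp[\skew(\Pi\psi-\psi)]{\Qgh\g-\g}$ (as $\psi$ is symmetric) is $\le Ch^2\|\g\|_1\|e_u\|$; $\inp[\Pi\psi]{e_\g}_Q$ is treated via $\inp[\psi]{e_\g}=0$ together with $\inp[\Pi\psi-\psi]{e_\g}$ and $\del\inp[\Pi\psi]{e_\g}$; and the quadrature errors $\tet\inp[A\Pi\s]{\Pi\psi}$, $\tet\inp[A\Pi\psi]{e_\s}$, $\del\inp[\Pi\psi]{\Qgh\g}$ and projection errors $\inp[A(\Pi\s-\s)]{\Pi\psi-\psi}$, $\inp[A(\Pi\psi-\psi)]{e_\s}$ are handled with Lemmas~\ref{proj-continuity} and~\ref{q-err-h2-lem} (which need $A\in W^{2,\infty}_{\Tc_h}$),~\eqref{prop-Pi},~\eqref{bound-l2-stress},~\eqref{bound-displ-rotat},~\eqref{inverse} and $\|\psi\|_1\le C\|e_u\|$. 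Collecting gives $\|e_u\|^2 \le Ch^2(\|\s\|_1+\|\g\|_1+\|\dvr\s\|_1)\|e_u\|$, and the MSMFE-0 case is the same argument with the $\del(\cdot,\cdot)$ terms dropped. The main obstacle I anticipate is the bookkeeping of the second paragraph --- isolating the precise cancellation that reduces the only $O(h)$ contributions to $\inp[\rho]{\s-\s_h}$ --- and then squeezing the extra power of $h$ out of $\inp[\rho]{\s_h}$ via the weak symmetry of $\s_h$, the skew/symmetric orthogonality, and the careful quadrature-error split.
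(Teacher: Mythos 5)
Your proof is correct and follows essentially the same duality argument as the paper: both test the error equation with $\Pi A^{-1}\epsilon(\phi)$, use the commuting property \eqref{space-div-prop} of $\Pi$ to produce $\|\Quh u-u_h\|^2$, and exploit skew/symmetric orthogonality together with the (weak) symmetry condition on $\sigma_h$ to upgrade the rotation-related terms to second order. Your explicit split of $(\rho,\sigma_h)$ via $\sigma_h=\Pi\sigma-e_\sigma$ and the two quadrature lemmas is precisely the step the paper compresses into its bound \eqref{sc-simp-rhs-1}, so the bookkeeping you flagged as the main obstacle matches the published argument.
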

\begin{proof}
We present the argument for the MSMFE-1 method. The proof for the
MSMFE-0 method follows by omitting the the quadrature error term
$\del(\cdot,\cdot)$. The error equation \eqref{err-m1-1} can be 
written as
\begin{align}
\inp[A(\s-\s_h)]{\t} + \inp[\Quh u-u_h]{\dvr \t} + \inp[\g - \g_h]{\t} 
+ \tet\inp[A\s_h]{\t} + \del\inp[\t]{\g_h} = 0.
\end{align}
Taking $\t = \Pi A^{-1}\epsilon(\phi)$ in the equation above, we get
\begin{align}
\begin{aligned} \label{bound-u-uh}
\|\Quh u-u_h\|^2 & = -\inp[A(\s-\s_h)]{\Pi A^{-1}\epsilon(\phi)} 
- \inp[\g - \g_h]{\Pi A^{-1}\epsilon(\phi)} \\
& \qquad - \tet\inp[A\s_h]{\Pi A^{-1}\epsilon(\phi)}
- \del\inp[\Pi A^{-1}\epsilon(\phi)]{\g_h}.
\end{aligned}
\end{align}
For the first term on the right, we have
\begin{align}
\begin{aligned}
\inp[A(\s-\s_h)]{\Pi A^{-1}\epsilon(\phi)} &= 
\inp[A(\s-\s_h)]{\Pi A^{-1}\epsilon(\phi) - A^{-1}\epsilon(\phi)} 
+ \inp[\s-\s_h]{\nabla\phi - \skew(\nabla \phi)} \\
&= \inp[A(\s-\s_h)]{\Pi A^{-1}\epsilon(\phi) - A^{-1}\epsilon(\phi)} 
- \inp[\dvr(\s-\s_h)]{\phi - \Quh\phi} \\
& \qquad
- \inp[\s -\s_h]{\skew(\nabla\phi) - \Qgh\skew(\nabla\phi)}
+ \delta(\s-\s_h,\Qgh\skew(\nabla\phi))
\\
&\le C h^2\left( \|\s\|_1 + \|\g\|_1 + \|\dvr \s\|_1 \right)\|\phi\|_2, \label{sc-simp-rhs-1}
	\end{aligned}
	\end{align}
where we used \eqref{prop-Quh}--\eqref{prop-Pi}, 
\eqref{continuity-l2-proj}, \eqref{q-err-h1-eq2}, \eqref{error-estimate},
\eqref{err-m1-2}, and \eqref{err-m1-3}. For the second term on the right in
\eqref{bound-u-uh} we have
	\begin{align}
	\begin{aligned}
	    \inp[\g - \g_h]{\Pi A^{-1}\epsilon(\phi)} 
&= \inp[\g - \g_h]{\Pi A^{-1}\epsilon(\phi) - A^{-1}\epsilon(\phi) } 
+ \inp[\g - \g_h]{A^{-1}\epsilon(\phi) } \\
&= \inp[\g - \g_h]{\Pi A^{-1}\epsilon(\phi) - A^{-1}\epsilon(\phi) } \\
&\le Ch^2\left( \|\s\|_1 + \|\g\|_1 \right)\|\phi\|_2, \label{sc-simp-rhs-3}
	\end{aligned}
	\end{align}
where the second equality is due to the skew-symmetry of $(\g - \g_h)$ and 
the symmetry of $A^{-1}\epsilon(\phi)$, and the inequality follows from 
\eqref{prop-Pi} and \eqref{error-estimate}. For the third term
on the right in \eqref{bound-u-uh} we write, using \eqref{q-err-h2-1}
\begin{align}
\begin{aligned}
\tet\inp[A\s_h]{\Pi A^{-1}\epsilon(\phi)} &\le C \sum_{E\in \Tc_h} h^2 \|\s_h\|_{1,E} \|\Pi A^{-1}\epsilon(\phi)\|_{1,E} \\
	        &\le C\sum_{E\in \Tc_h} h^2 \left( \|\s_h - \Pi\s\|_{1,E} + \|\Pi\s\|_{1,E} \right)\| A^{-1}\epsilon(\phi)\|_{1,E} \\
	        &\le C\sum_{E\in \Tc_h} h^2 \left( h^{-1}\|\s_h-\Pi\s\|_E + \|\s\|_{1,E} \right) \|\epsilon(\phi)\|_{1,E} \\
	        &\le C h^2 \left(\|\s\|_{1} + \|\g\|_{1} \right) \|\phi\|_{2},\label{sc-simp-rhs-4}
	\end{aligned}
	\end{align}
where we used \eqref{mixed-proj-continuity}, \eqref{inverse}, and 
\eqref{error-estimate}. Similarly, for the last term on the right in 
\eqref{bound-u-uh}, using \eqref{q-err-h2-2}, 
\eqref{continuity-l2-proj}, \eqref{inverse}, and \eqref{error-estimate}, 
we have
	\begin{align}
	\begin{aligned}
	    \del\inp[\Pi A^{-1}\epsilon(\phi)]{\g_h} 
	    &\le C\sum_{E\in\Tc_h} h^2 \left( \|\g_h - \Qgh \g\|_{1,E} + \|\Qgh\g\|_{1,E} \right) \|A^{-1}\epsilon(\phi)\|_{1,E} \\
	    &\le Ch^2 \left( \|\s\|_1 + \|\g\|_1 \right) \|\phi\|_2.
\label{sc-simp-rhs-5}
	\end{aligned}
	\end{align}
The statement of the theorem follows by combining 
\eqref{bound-u-uh}--\eqref{sc-simp-rhs-5} and elliptic regularity 
\eqref{el-reg}.
\end{proof}

\section{Numerical results}
We present several numerical experiments confirming the theoretical
convergence rates. We used FEniCS Project \cite{LoggMardalEtAl2012a}
for the implementation of the MSMFE-0 and MSMFE-1 methods on
simplicial grids in 2D and 3D.  Both methods have been implemented
using the rotation variable $p_h = \Xi^{-1}(\g_h)$, where $\Xi$ is
defined in \eqref{skew-extra}. For example, using
\eqref{asym-identity}, the MSMFE-1 method
\eqref{msmfe-1-1}--\eqref{msmfe-1-3} can be written as
    \begin{align}
	(A\sigma_h,\tau)_Q + (u_h,\dvr{\tau}) + (p_h, \asym\tau)_Q &= \gnp[g]{\tau}_{\Gd}, &\tau &\in \X_h, \label{msmfe-actual-1}\\
	(\dvr \sigma_h,v) &= (f,v), &v &\in V_h, \label{msmfe-actual-2} \\
	(\asym\sigma_h,w)_Q &= 0, &w &\in \Xi^{-1}(\W_h^1), \label{msmfe-actual-3}
\end{align}
with a similar formulation for the MSMFE-0 method. Note that the rotation a scalar in
$\Pc_k$ in 2D, and a vector in $(\Pc_k)^3$ in 3D, with $k=0,1$ for MSMFE-0 and MSMFE-1, 
respectively.

In the first example we study the convergence of the proposed methods in 2D. We consider
a test case from \cite{arnold2015mixed} on 
the unit square with homogeneous Dirichlet boundary conditions
and analytical solution given by
$$
u = \begin{pmatrix} \cos(\pi x)\sin(2\pi y) \\ \cos(\pi y)\sin(\pi x) \end{pmatrix}.
$$ 
The body force is then determined using Lam\'{e} coefficients $\lambda
= 123,\, \mu = 79.3$. The computed solution is shown in
Figure~\ref{fig:1_1}. Since we use $p_h = \Xi^{-1}(\g_h)$ for the
Lagrange multiplier, the errors are also computed using this
variable. However, it is clear that the operator $\Xi$ does not
introduce extra numerical error.

In Table \ref{tab:1} we show errors and convergence rates on a
sequence of mesh refinements, computed using the MSMFE-0 and MSMFE-1
methods, including displacement superconvergence. All rates are in
accordance with the error analysis presented in the previous section.
We note that the MSMFE-1 method with linear rotations exhibits
convergence for the rotation of order $O(h^{1.5})$, slightly higher
than the theoretical result.
\begin{figure}[ht!]
	\centering
	\begin{subfigure}[b]{0.24\textwidth}
		\includegraphics[width=\textwidth]{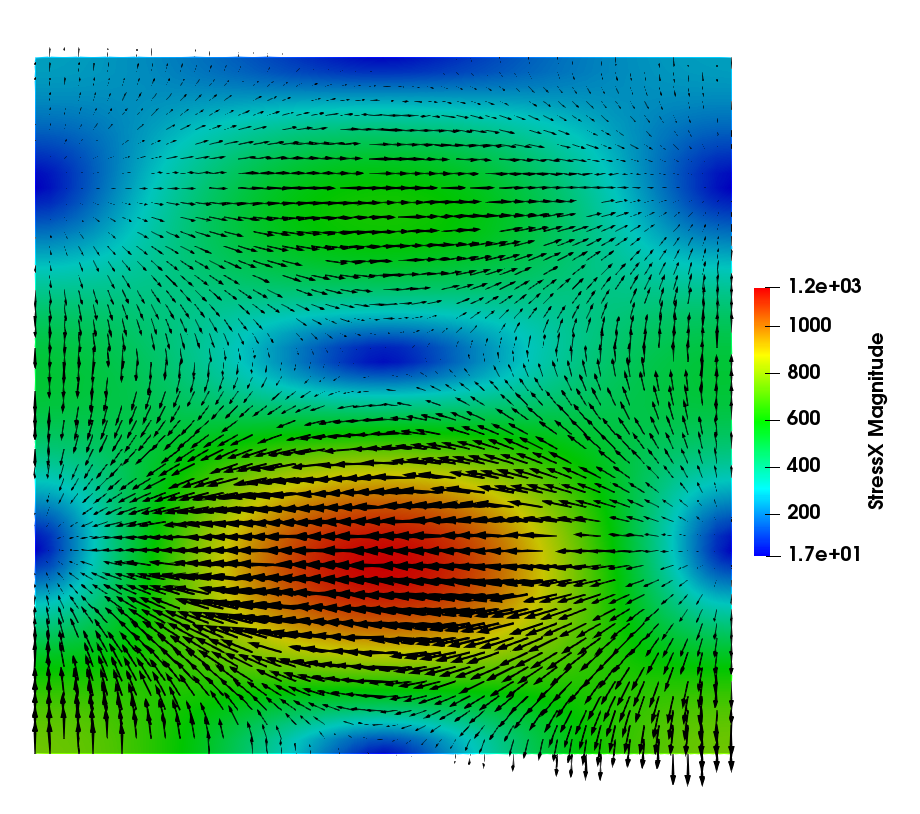}
		\caption{$x$-component of stress}
		\label{fig:1_1}
	\end{subfigure}
	\begin{subfigure}[b]{0.24\textwidth}
		\includegraphics[width=\textwidth]{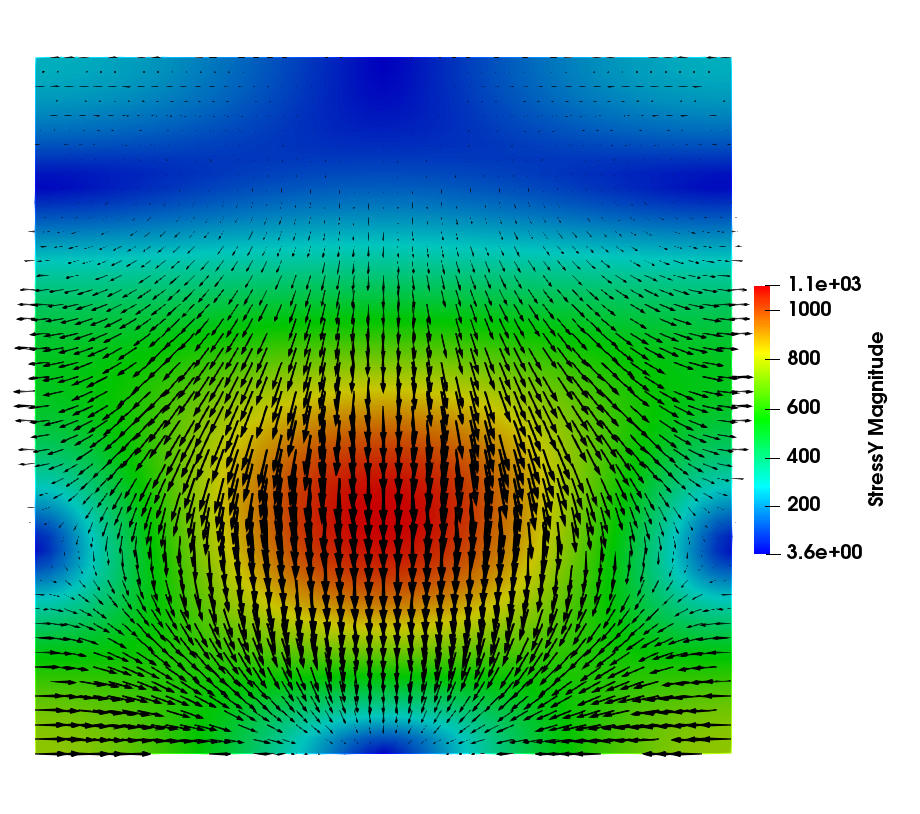}
		\caption{$y$-component of stress}
		\label{fig:1_2}
	\end{subfigure}
	\begin{subfigure}[b]{0.24\textwidth}
		\includegraphics[width=\textwidth]{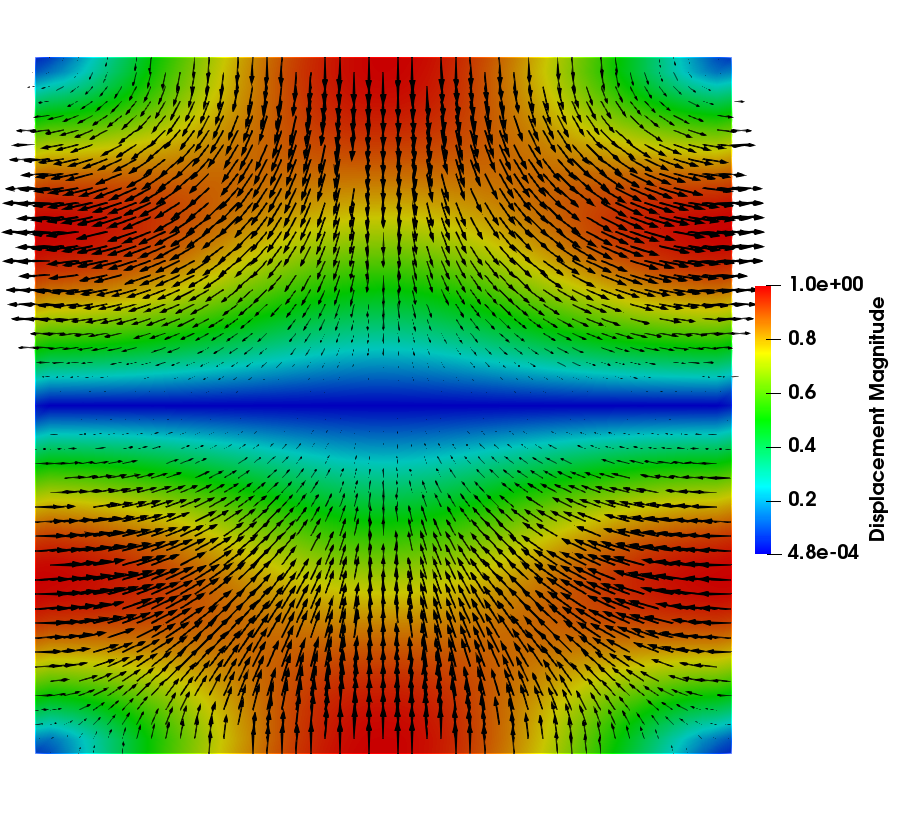}
		\caption{Displacement}
		\label{fig:1_3}
	\end{subfigure}
	\begin{subfigure}[b]{0.24\textwidth}
		\includegraphics[width=\textwidth]{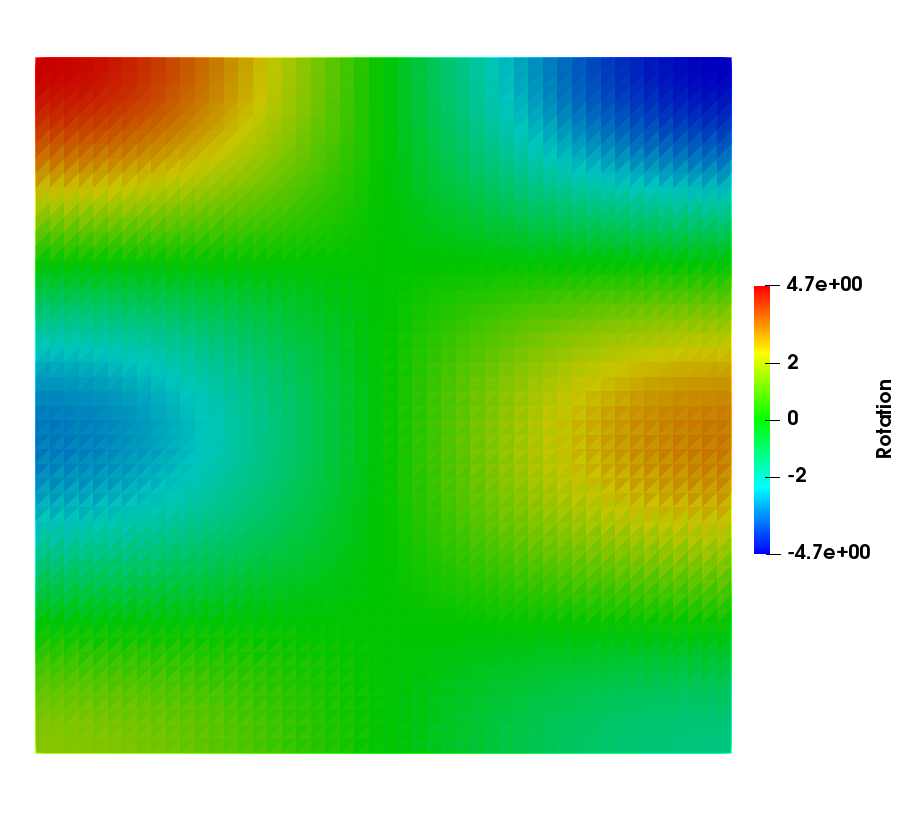}
		\caption{Rotation}
		\label{fig:1_4}
	\end{subfigure}
	\caption{Computed solution for Example 1, $h=1/32$}\label{fig:4}
\end{figure}
\begin{table}[ht!]
\begin{center}
\begin{tabular}{c|cc|cc|cc|cc|cc}
	\hline
	\multicolumn{11}{c}{MSMFE-0} \\
	\hline
	      & \multicolumn{2}{c|}{$\|\sigma - \sigma_h\|$} & \multicolumn{2}{c|}{$\|\dvr(\sigma - \sigma_h)\|$} &      \multicolumn{2}{c|}{$\|u - u_h\|$}   & \multicolumn{2}{c|}{$\|\Quh u - u_h\|$}   &         \multicolumn{2}{c}{$\|p -p_h\|$}         \\
	      $h$   &  error   &                 rate                  &  error   &                    rate      &  error   &                    rate                    &  error   &                 rate                  &  error   &                    rate                    \\\hline
1/2	&	8.01E-01	&	--	&	8.98E-01	&	--&	8.37E-01	&	--&	8.24E-01	&	--&	1.02E+00	&	--	\\
1/4	&	3.58E-01	&	1.17	&	4.26E-01	&	1.09	&	3.50E-01	&	1.27	&	1.82E-01	&	2.34	&	5.03E-01	&	1.02	\\
1/8	&	1.53E-01	&	1.23	&	1.99E-01	&	1.10	&	1.73E-01	&	1.02	&	4.70E-02	&	1.96	&	3.13E-01	&	0.69	\\
1/16	&	7.03E-02	&	1.12	&	9.84E-02	&	1.02	&	8.67E-02	&	1.00	&	1.20E-02	&	1.97	&	1.71E-01	&	0.87	\\
1/32	&	3.42E-02	&	1.04	&	5.00E-02	&	0.98	&	4.35E-02	&	0.99	&	3.03E-03	&	1.99	&	8.78E-02	&	0.96	\\
1/64	&	1.70E-02	&	1.01	&	2.60E-02	&	0.95	&	2.18E-02	&	1.00	&	7.59E-04	&	2.00	&	4.42E-02	&	0.99	\\\hline
	\hline
	\multicolumn{11}{c}{MSMFE-1} \\
	\hline

	 & \multicolumn{2}{c|}{$\|\sigma - \sigma_h\|$} & \multicolumn{2}{c|}{$\|\dvr(\sigma - \sigma_h)\|$} &      \multicolumn{2}{c|}{$\|u - u_h\|$}   & \multicolumn{2}{c|}{$\|\Quh u - u_h\|$}   &         \multicolumn{2}{c}{$\|p -p_h\|$}         \\
	 $h$   &  error   &                 rate                  &  error   &                    rate      &  error   &                    rate                    &  error   &                 rate                  &  error   &                    rate                    \\\hline
1/2	&	7.96E-01	&	--	&	9.01E-01	&	--&	8.60E-01	&	--	&	8.47E-01	&	--&	9.95E-01	&	--	\\
1/4	&	3.67E-01	&	1.13	&	4.26E-01	&	1.09	&	3.55E-01	&	1.29	&	1.95E-01	&	2.28	&	4.55E-01	&	1.12	\\
1/8	&	1.56E-01	&	1.23	&	1.93E-01	&	1.14	&	1.76E-01	&	1.01	&	5.67E-02	&	1.78	&	1.68E-01	&	1.44	\\
1/16	&	7.11E-02	&	1.14	&	9.34E-02	&	1.05	&	8.75E-02	&	1.01	&	1.55E-02	&	1.87	&	5.37E-02	&	1.65	\\
1/32	&	3.43E-02	&	1.05	&	4.66E-02	&	1.00	&	4.37E-02	&	1.00	&	4.01E-03	&	1.95	&	1.66E-02	&	1.70	\\
1/64	&	1.70E-02	&	1.02	&	2.37E-02	&	0.98	&	2.18E-02	&	1.00	&	1.02E-03	&	1.98	&	5.26E-03	&	1.66	\\\hline
\end{tabular}
\end{center}
\caption{Relative errors and convergence rates for Example 1, triangles.} \label{tab:1} 
\end{table}

The second test case illustrates the performance of the methods in 3D. We consider the 
unit cube with homogeneous Dirichlet boundary conditions,
analytical solution given by
\begingroup
\renewcommand*{\arraystretch}{1.5}
\begin{align}
    u = \begin{pmatrix}
        0 \\
        -(e^x - 1)(y - \cos(\frac{\pi}{12})(y-\frac{1}{2}) + \sin(\frac{\pi}{12})(z-\frac{1}{2})-\frac{1}{2}) \\
        -(e^x - 1)(z - \sin(\frac{\pi}{12})(y-\frac{1}{2}) - \cos(\frac{\pi}{12})(z-\frac{1}{2})-\frac{1}{2})
 \end{pmatrix},
\end{align}
\endgroup
and Lam\'{e} coefficients $\lambda = \mu = 100$. The computed solution is shown in
Figure~\ref{fig:5}. In Table \ref{tab:2}
we show errors and convergence rates for both methods on a sequence of
mesh refinements. Again we observe that the numerical results verify
the theoretical convergence rates.
\begin{figure}[ht!]
	\centering
	\begin{subfigure}[b]{0.194\textwidth}
		\includegraphics[width=\textwidth]{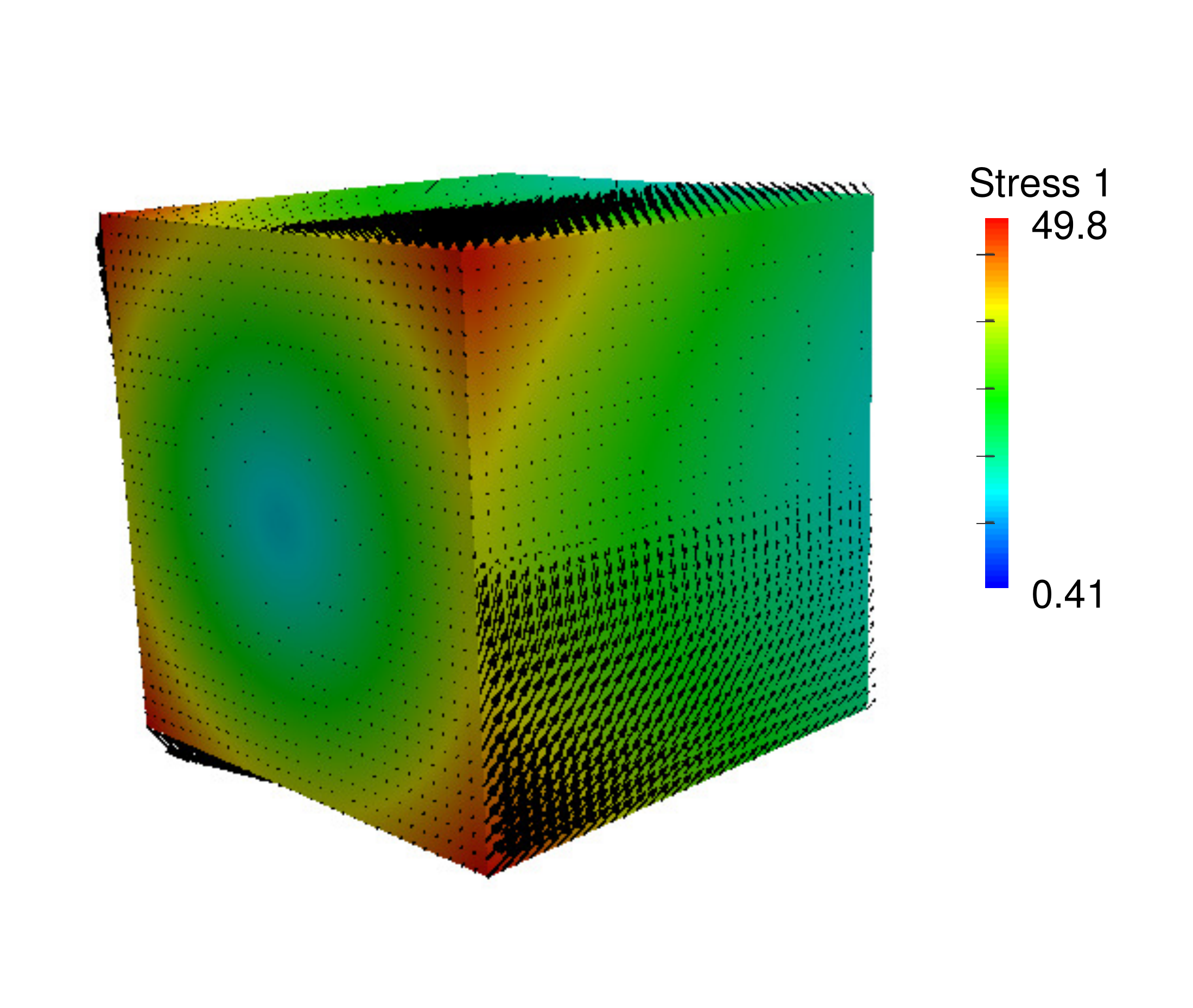}
		\caption{$x$-stress}
		\label{fig:2_1}
	\end{subfigure}
	\begin{subfigure}[b]{0.194\textwidth}
		\includegraphics[width=\textwidth]{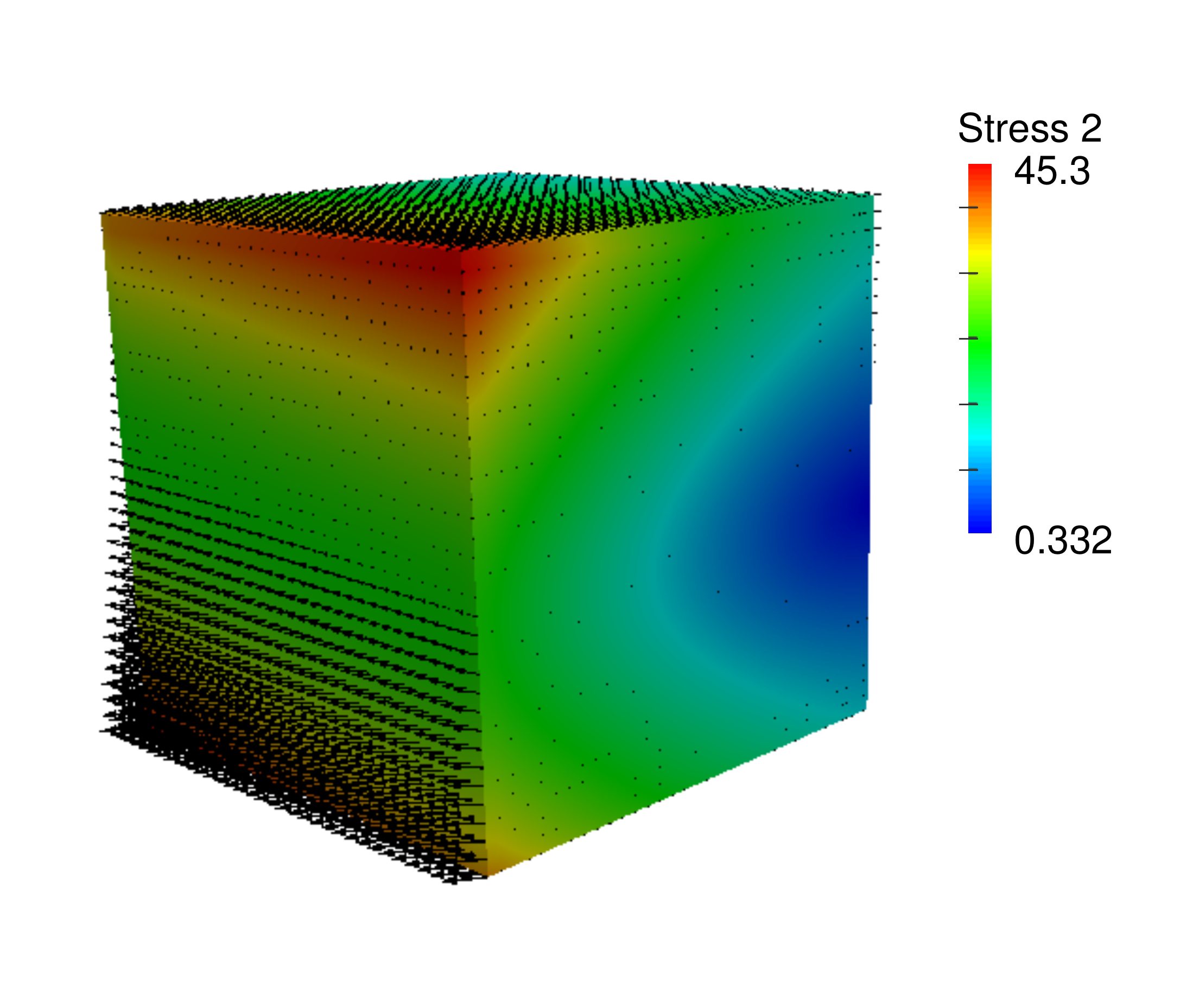}
		\caption{$y$-stress}
		\label{fig:2_2}
	\end{subfigure}
		\begin{subfigure}[b]{0.194\textwidth}
		\includegraphics[width=\textwidth]{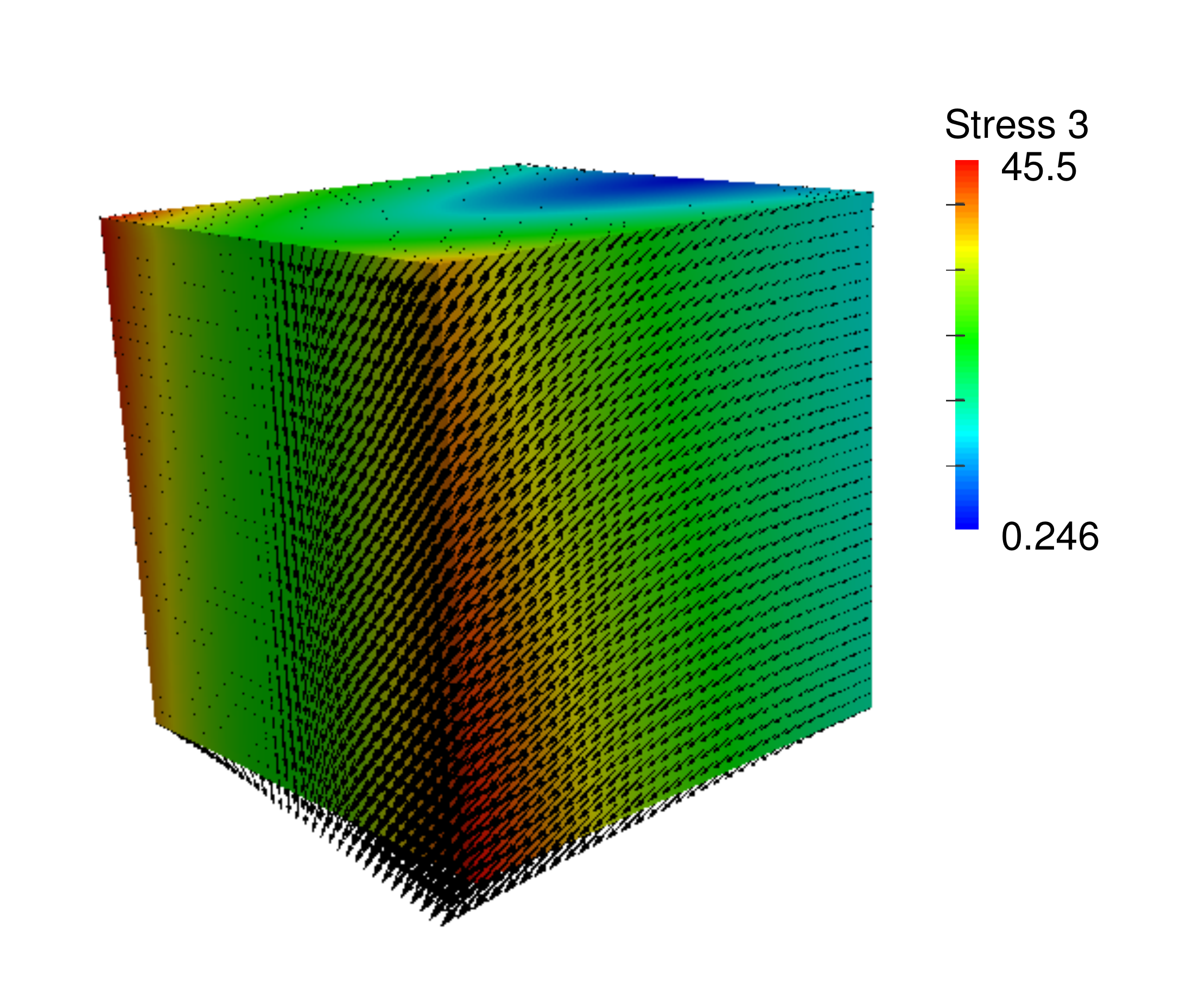}
		\caption{$z$-stress}
		\label{fig:2_3}
	\end{subfigure}
	\begin{subfigure}[b]{0.194\textwidth}
		\includegraphics[width=\textwidth]{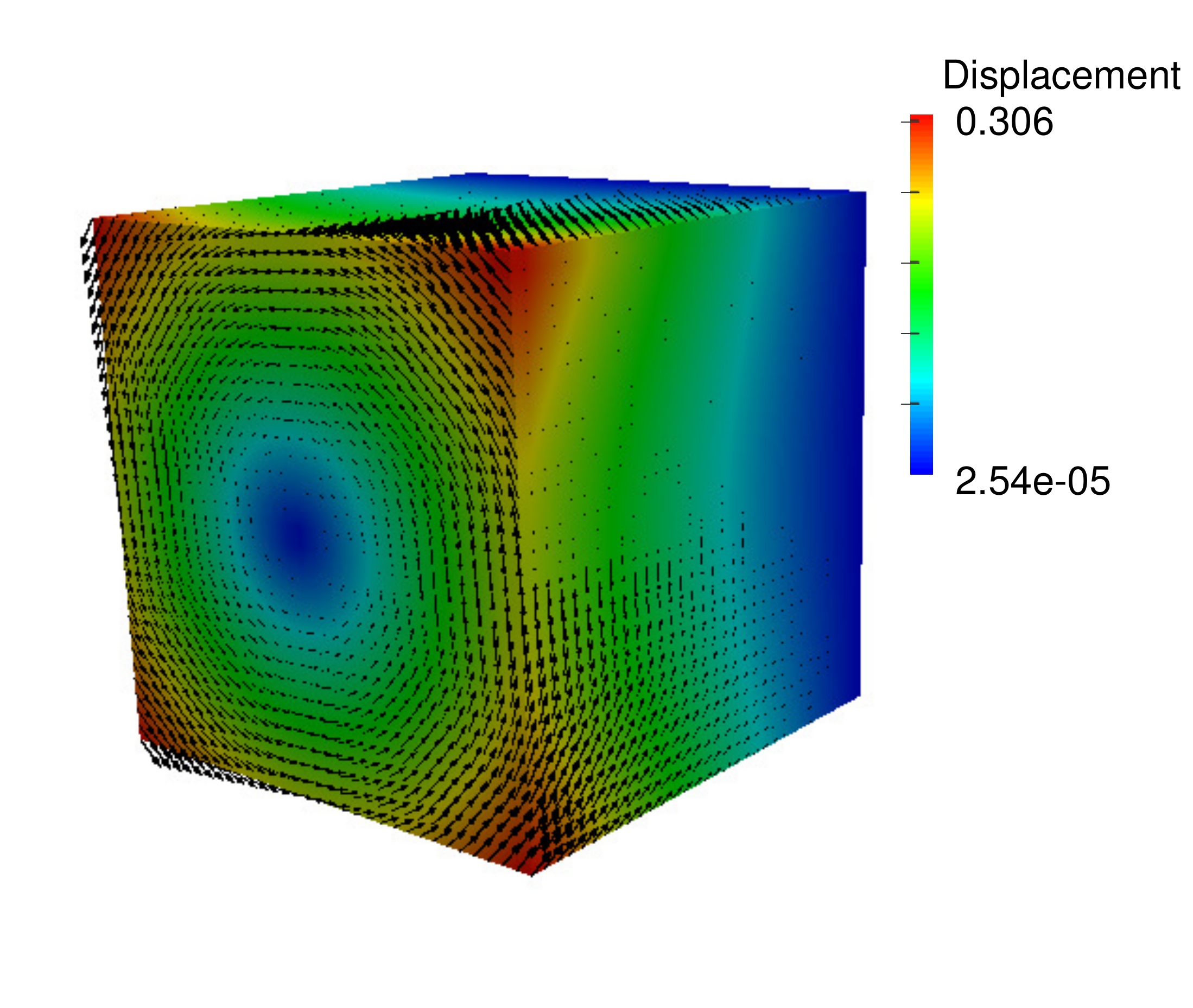}
		\caption{Displacement}
		\label{fig:2_4}
	\end{subfigure}
	\begin{subfigure}[b]{0.194\textwidth}
		\includegraphics[width=\textwidth]{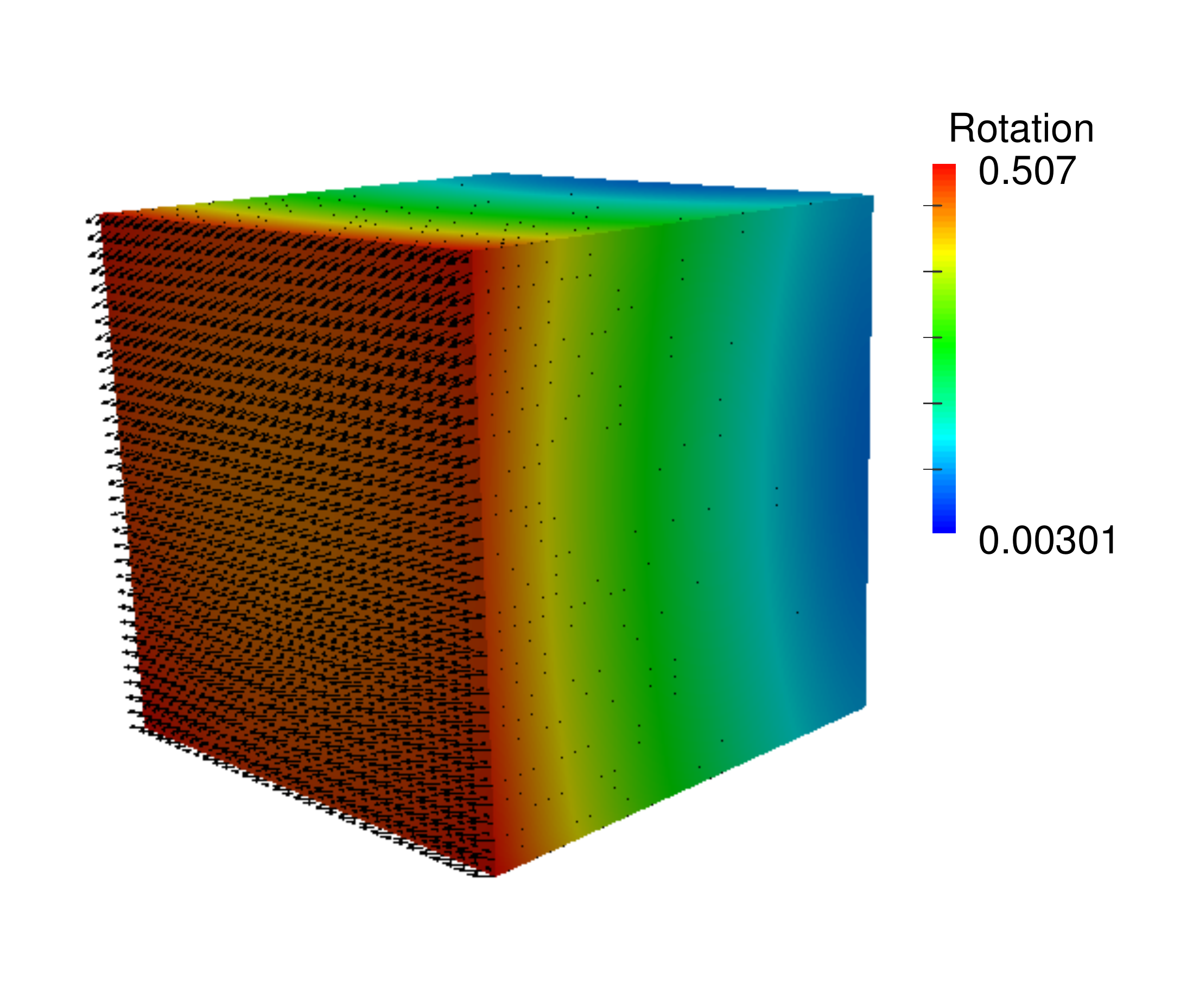}
		\caption{Rotation}
		\label{fig:2_5}
	\end{subfigure}
	\caption{Computed solution for Example 2, $h=1/32$}\label{fig:5}
\end{figure}
\begin{table}[ht!]
\begin{center}
\begin{tabular}{c|cc|cc|cc|cc|cc}
	\hline
	\multicolumn{11}{c}{MSMFE-0} \\
	\hline
	      & \multicolumn{2}{c|}{$\|\sigma - \sigma_h\|$} & \multicolumn{2}{c|}{$\|\dvr(\sigma - \sigma_h)\|$} &      \multicolumn{2}{c|}{$\|u - u_h\|$}   & \multicolumn{2}{c|}{$\|\Quh u - u_h\|$}   &         \multicolumn{2}{c}{$\|p -p_h\|$}         \\
	  $h$   &  error   &                 rate                  &  error   &                    rate      &  error   &                    rate                    &  error   &                 rate                  &  error   &                    rate                    \\\hline
1/2	&	4.46E-01	&	--&	2.45E-01	&	--	&	4.15E-01	&	--&	1.32E-01	&	--&	2.41E-01	&	--	\\
1/4	&	1.96E-01	&	1.19	&	1.21E-01	&	1.02	&	2.06E-01	&	1.01	&	3.11E-02	&	1.98	&	1.20E-01	&	1.00	\\
1/8	&	9.08E-02	&	1.11	&	6.02E-02	&	1.01	&	1.03E-01	&	1.00	&	7.72E-03	&	1.98	&	6.01E-02	&	1.00	\\
1/16	&	4.40E-02	&	1.05	&	3.01E-02	&	1.00	&	5.14E-02	&	1.00	&	1.94E-03	&	1.99	&	2.99E-02	&	1.00	\\
1/32	&	2.17E-02	&	1.02	&	1.51E-02	&	1.00	&	2.57E-02	&	1.00	&	4.85E-04	&	2.00	&	1.49E-02	&	1.00	\\\hline \hline
	\multicolumn{11}{c}{MSMFE-1} \\
	\hline
	      & \multicolumn{2}{c|}{$\|\sigma - \sigma_h\|$} & \multicolumn{2}{c|}{$\|\dvr(\sigma - \sigma_h)\|$} &      \multicolumn{2}{c|}{$\|u - u_h\|$}   & \multicolumn{2}{c|}{$\|\Quh u - u_h\|$}   &         \multicolumn{2}{c}{$\|p -p_h\|$}         \\
	      $h$   &  error   &                 rate                  &  error   &                    rate      &  error   &                    rate                    &  error   &                 rate                  &  error   &                    rate                    \\\hline
1/2	&	5.40E-01	&	--	&	2.45E-01	&	--	&	4.20E-01	&	--	&	1.55E-01	&	--	&	2.38E-01	&	--	\\
1/4	&	2.42E-01	&	1.16	&	1.21E-01	&	1.02	&	2.07E-01	&	1.02	&	4.04E-02	&	1.83	&	1.00E-01	&	1.24	\\
1/8	&	1.09E-01	&	1.15	&	6.02E-02	&	1.01	&	1.03E-01	&	1.01	&	1.07E-02	&	1.89	&	3.93E-02	&	1.35	\\
1/16	&	5.05E-02	&	1.12	&	3.01E-02	&	1.00	&	5.14E-02	&	1.00	&	2.81E-03	&	1.93	&	1.47E-02	&	1.42	\\
1/32	&	2.39E-02	&	1.08	&	1.51E-02	&	1.00	&	2.57E-02	&	1.00	&	7.20E-04	&	1.96	&	5.38E-03	&	1.45	\\\hline
\end{tabular}
\end{center}
\caption{Relative errors and convergence rates for Example 2, tetrahedra.} \label{tab:2} 
\end{table}

Our third example, taken from \cite{Jan-IJNME}, demonstrates the performance
of the MSMFE methods for discontinuous materials. We consider a
$3\times 3$ partitioning of the unit square and introduce heterogeneity in the 
center block through
\begin{align*}
	\chi(x,y) = 
	\begin{cases}
		1 \mbox{ if } \min(x,y) > \frac13 \mbox{ and } \max(x,y) < \frac23, \\
		0 \mbox{ otherwise.}
	\end{cases}
\end{align*}
We set $\kappa = 10^6$ to characterize the jump in
the Lam\'{e} coefficients and take $ \lambda = \mu = (1-\chi) + \kappa\chi$.
We choose a continuous displacement solution as
\begin{align*}
	u = \frac{1}{(1-\chi) + \kappa\chi}
	\begin{pmatrix}
		\sin(3\pi x)\sin(3\pi y) \\ \sin(3\pi x)\sin(3\pi y)
	\end{pmatrix},
\end{align*}
so that the stress is also continuous and independent of $\kappa$. The
body forces are recovered from the above solution using the governing
equations. We note that the rotation $\gamma = \skew(\nabla u)$ is
discontinuous. The MSMFE-0 method, which has discontinuous
displacements and rotations, handles properly the discontinuity in
these variables and exhibits first order convergence in all variables,
as as well as displacement superconvergence, see the top part of
Table~\ref{tab:3}. The MSMFE-1 method uses continuous rotations and
does not resolve the rotation discontinuity, which results in a
reduced convergence rate for the rotation, as well as the
stress. {\color{black} Instead, we can use the modified MSMFE-1
method \eqref{scaled-msmfe-1-1}--\eqref{scaled-msmfe-1-3} based
on the scaled rotation $\tilde \gamma = A^{-1} \gamma$, which 
in this case is continuous.} In terms of the implemented
method \eqref{msmfe-actual-1}--\eqref{msmfe-actual-3} with the reduced
rotation $p_h = \Xi^{-1}(\g_h)$, noting that $\tilde p =
\Xi^{-1}(\tilde \gamma)$, the third term in \eqref{msmfe-actual-1}
becomes $(\tilde p_h, \asym(A\tau))_Q$ and the term in
\eqref{msmfe-actual-3} becomes $(\asym(A\sigma_h),w)_Q$. The computed
solution with the modified MSMFE-1 method, including the scaled
rotation $\tilde p_h$, is shown in Figure~\ref{fig:6}. The bottom part
of Table~\ref{tab:3} indicates that the method exhibits the same order
of convergence for all variables as for smooth problems. 
\begin{figure}[ht!]
	\centering
	\begin{subfigure}[b]{0.194\textwidth}
		\includegraphics[width=\textwidth]{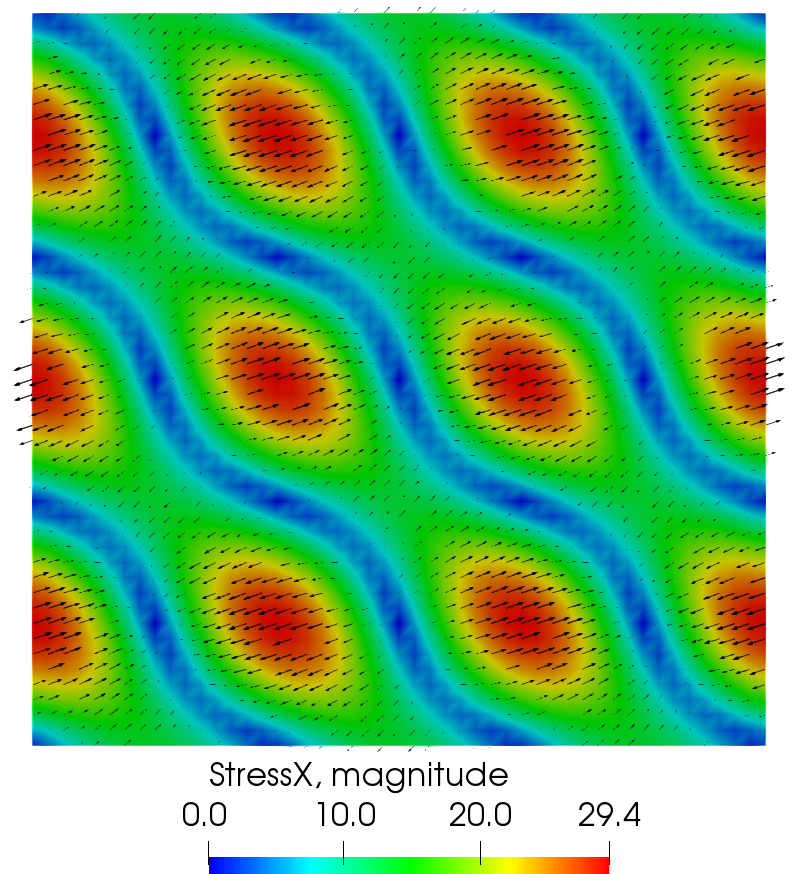}
		\caption{$x$-stress}
		\label{fig:6_1}
	\end{subfigure}
	\begin{subfigure}[b]{0.194\textwidth}
		\includegraphics[width=\textwidth]{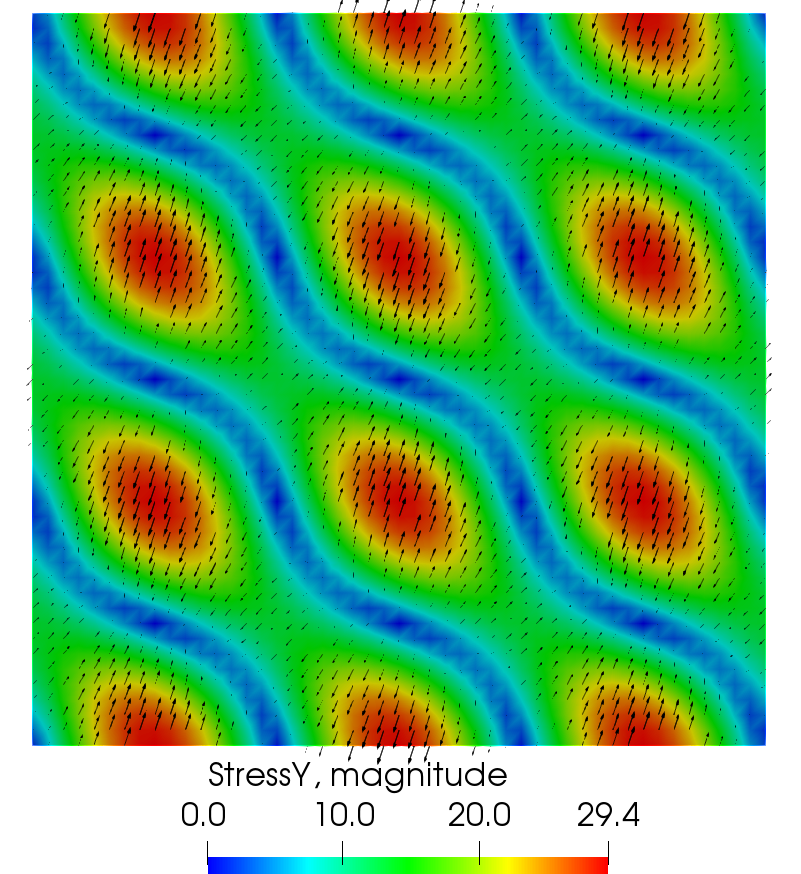}
		\caption{$y$-stress}
		\label{fig:6_2}
	\end{subfigure}
	\begin{subfigure}[b]{0.194\textwidth}
		\includegraphics[width=\textwidth]{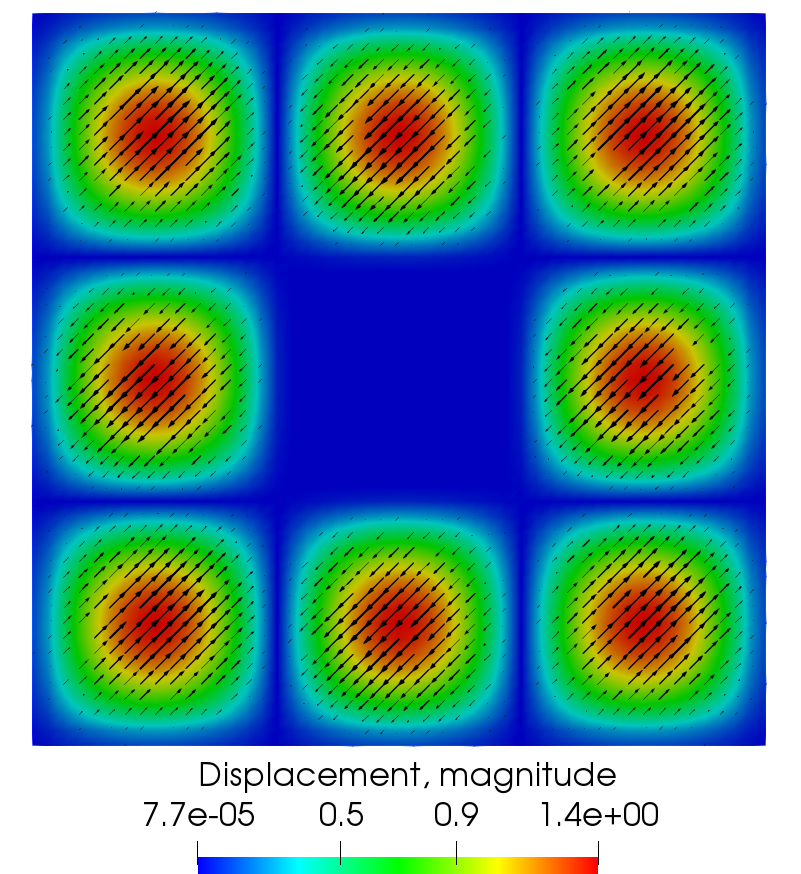}
		\caption{Displacement}
		\label{fig:6_3}
	\end{subfigure}
	\begin{subfigure}[b]{0.194\textwidth}
		\includegraphics[width=\textwidth]{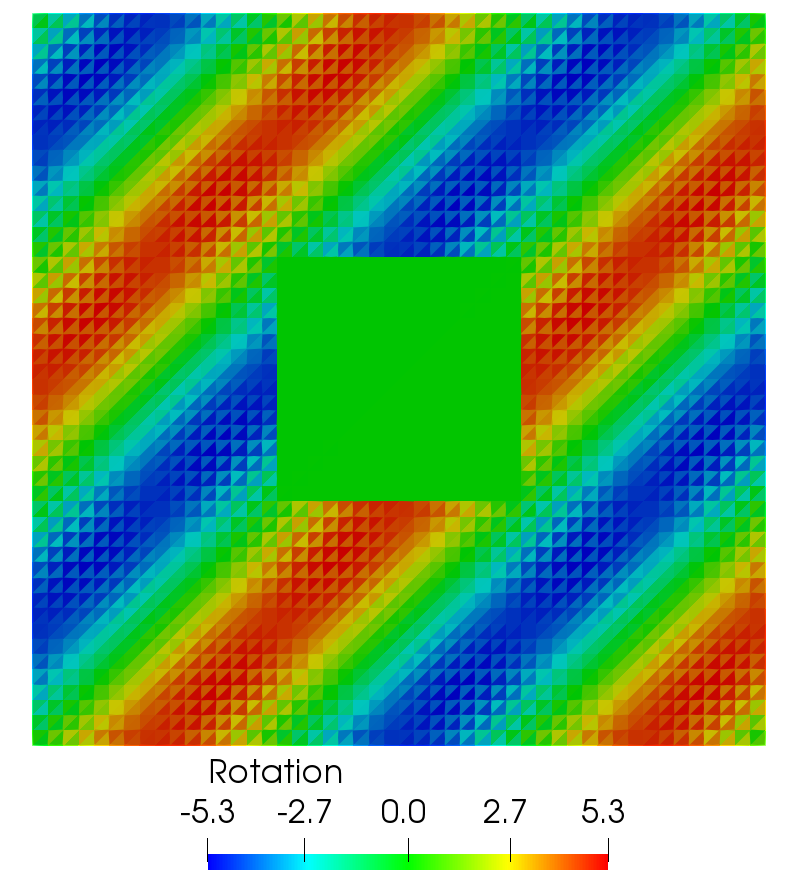}
		\caption{Rotation}
		\label{fig:6_4}
	\end{subfigure}
	\begin{subfigure}[b]{0.194\textwidth}
		\includegraphics[width=\textwidth]{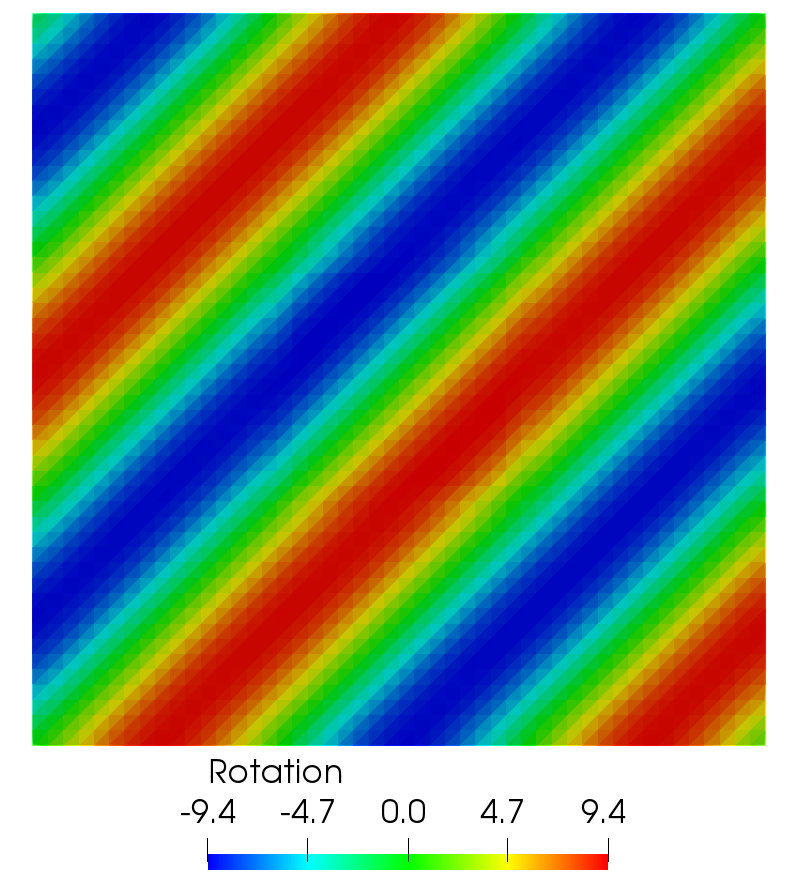}
		\caption{Scaled rotation}
		\label{fig:6_5}
	\end{subfigure}
	\caption{Computed solution for Example 3, $h=1/48$}\label{fig:6}
\end{figure}
\begin{table}[ht!]
	\begin{center}
		\begin{tabular}{c|cc|cc|cc|cc|cc}
			\hline
			\multicolumn{11}{c}{MSMFE-0} \\
			\hline
			& \multicolumn{2}{c|}{$\|\sigma - \sigma_h\|$} & \multicolumn{2}{c|}{$\|\dvr(\sigma - \sigma_h)\|$} &      \multicolumn{2}{c|}{$\|u - u_h\|$}   & \multicolumn{2}{c|}{$\|\Quh u - u_h\|$}   &         \multicolumn{2}{c}{$\|p -p_h\|$}         \\
			$h$   &  error   &                 rate                  &  error   &                    rate      &  error   &                    rate                    &  error   &                 rate                  &  error   &                    rate                    \\\hline
  1/3 & 1.27E+00 &   - & 1.20E+00 &   - & 1.61E+00 &   - & 1.49E+00 &   - & 1.46E+00 &   - \\
  1/6 & 6.97E-01 &   0.87 & 7.28E-01 &   0.73 & 5.87E-01 &   1.45 & 4.55E-01 &   1.71 & 6.50E-01 &   1.17 \\
  1/12 & 2.68E-01 &   1.38 & 3.33E-01 &   1.13 & 2.73E-01 &   1.10 & 1.19E-01 &   1.93 & 4.70E-01 &   0.47 \\
  1/24 & 1.05E-01 &   1.35 & 1.58E-01 &   1.07 & 1.33E-01 &   1.04 & 3.08E-02 &   1.95 & 2.76E-01 &   0.77 \\
  1/48 & 4.72E-02 &   1.16 & 7.79E-02 &   1.02 & 6.57E-02 &   1.01 & 7.79E-03 &   1.98 & 1.45E-01 &   0.93 \\
  1/96 & 2.28E-02 &   1.05 & 3.88E-02 &   1.01 & 3.28E-02 &   1.00 & 1.96E-03 &   1.99 & 7.34E-02 &   0.98 \\ \hline \hline
			\multicolumn{11}{c}{MSMFE-1 {\color{black} with scaled rotation}} \\
			\hline
			& \multicolumn{2}{c|}{$\|\sigma - \sigma_h\|$} & \multicolumn{2}{c|}{$\|\dvr(\sigma - \sigma_h)\|$} &      \multicolumn{2}{c|}{$\|u - u_h\|$}   & \multicolumn{2}{c|}{$\|\Quh u - u_h\|$}   &         \multicolumn{2}{c}{$\|\tilde{p} -\tilde{p}_h\|$}         \\
			$h$   &  error   &                 rate                  &  error   &                    rate      &  error   &                    rate                    &  error   &                 rate                  &  error   &                    rate                    \\\hline
			1/3	&	1.26E+00	&	-	&	1.20E+00	&	-	&	1.73E+00	&	-	&	1.59E+00	&	-	&	1.20E+00	&	-	\\
			1/6	&	6.82E-01	&	0.88	&	7.28E-01	&	0.73	&	5.74E-01	&	1.59	&	4.28E-01	&	1.89	&	5.46E-01	&	1.14	\\
			1/12	&	2.60E-01	&	1.39	&	3.33E-01	&	1.13	&	2.72E-01	&	1.08	&	1.17E-01	&	1.87	&	2.10E-01	&	1.38	\\
			1/24	&	1.03E-01	&	1.34	&	1.58E-01	&	1.07	&	1.33E-01	&	1.04	&	3.08E-02	&	1.92	&	6.68E-02	&	1.66	\\
			1/48	&	4.65E-02	&	1.14	&	7.79E-02	&	1.02	&	6.57E-02	&	1.01	&	7.90E-03	&	1.96	&	2.11E-02	&	1.66	\\
			1/96	&	2.26E-02	&	1.04	&	3.88E-02	&	1.01	&	3.28E-02	&	1.00	&	2.01E-03	&	1.98	&	6.95E-03	&	1.60	\\ \hline
		\end{tabular}
	\end{center}
	\caption{Relative errors and convergence rates for Example 3, triangles.} \label{tab:3} 
\end{table}

{\color{black}Our final example, similar to the one in
  \cite{Hansbo-Larson}, is to study the locking-free
  property of the MSMFE method.  We consider the MSMFE-1 method on the
  unit square domain with the following boundary conditions: $u = 0$
  at $y=0$, $\sigma\,n = 0$ at $x=0 \mbox{ and } x=1$, and 
  $(\sigma\,n) \cdot n = 0$, $(\sigma\,n) \cdot t = 1$ at $y=1$, 
  where $t$ denotes the unit
  tangential vector to the side. We recall that the Lam\'{e}
  coefficients are determined from the Young's modulus $E$ and the
  Poisson's ratio $\nu$ via the well-known relationships
$$
\lambda = \frac{E\nu}{(1+\nu)(1-2\nu)},\quad \mu = \frac{E}{2(1+\nu)}.
$$ 
We fix the
  Young's modulus $E = 10^{5}$ and vary the Poisson's ratio $\nu = 0.5
  - k$, $k = 1e-l$, for $l =\{1,2,5,9\}$. Locking would result in the
  displacement solution going to zero as $\nu$ approaches $0.5$. In
  Figure~\ref{fig:7} (left) we see that such behavior is not present,
  confirming the robustness of the method for almost incompressible
materials. In addition, a plot
  of the displacement magnitude along the top side of the square
  ($y=1$) for various choices of $k$ is shown in the
  Figure~\ref{fig:7} (right).  One can see that there is little 
  change in the displacement solution when $\nu \to 0.5$.}

\begin{figure}[ht!]
	\centering
	\begin{subfigure}[b]{0.65\textwidth}	
    		\includegraphics[width=0.25\textwidth]{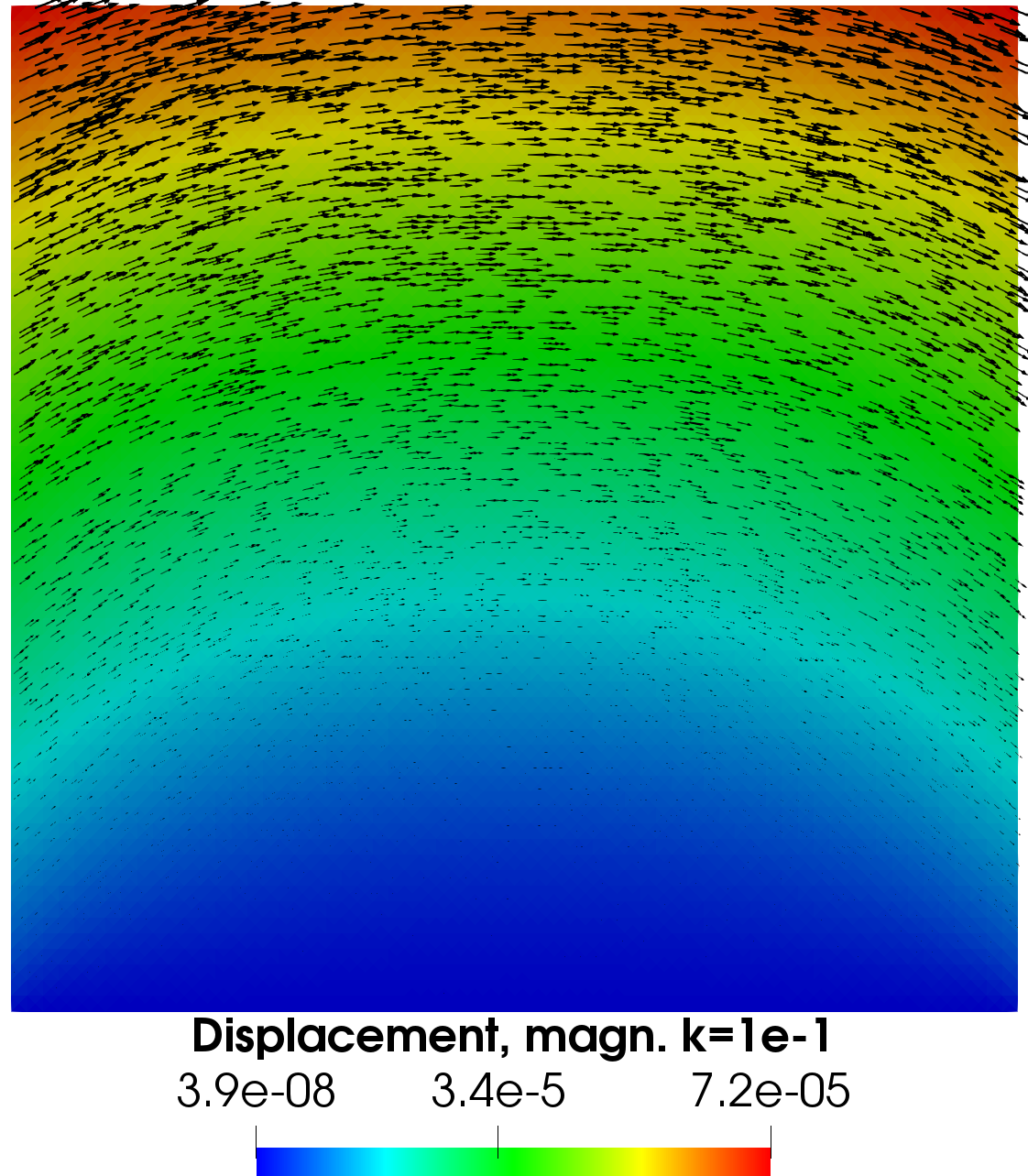}
    		\includegraphics[width=0.25\textwidth]{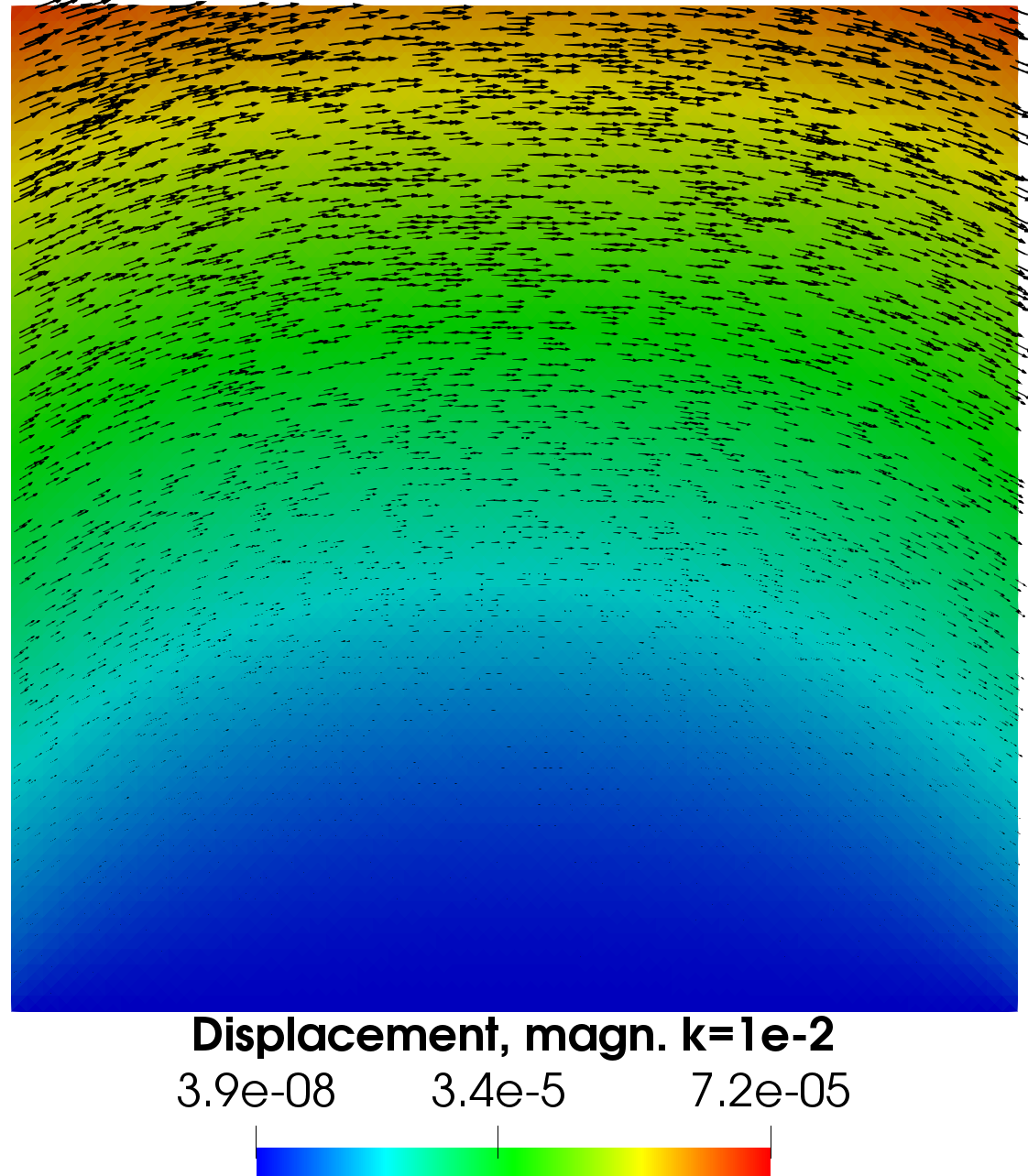}\\
		\includegraphics[width=0.25\textwidth]{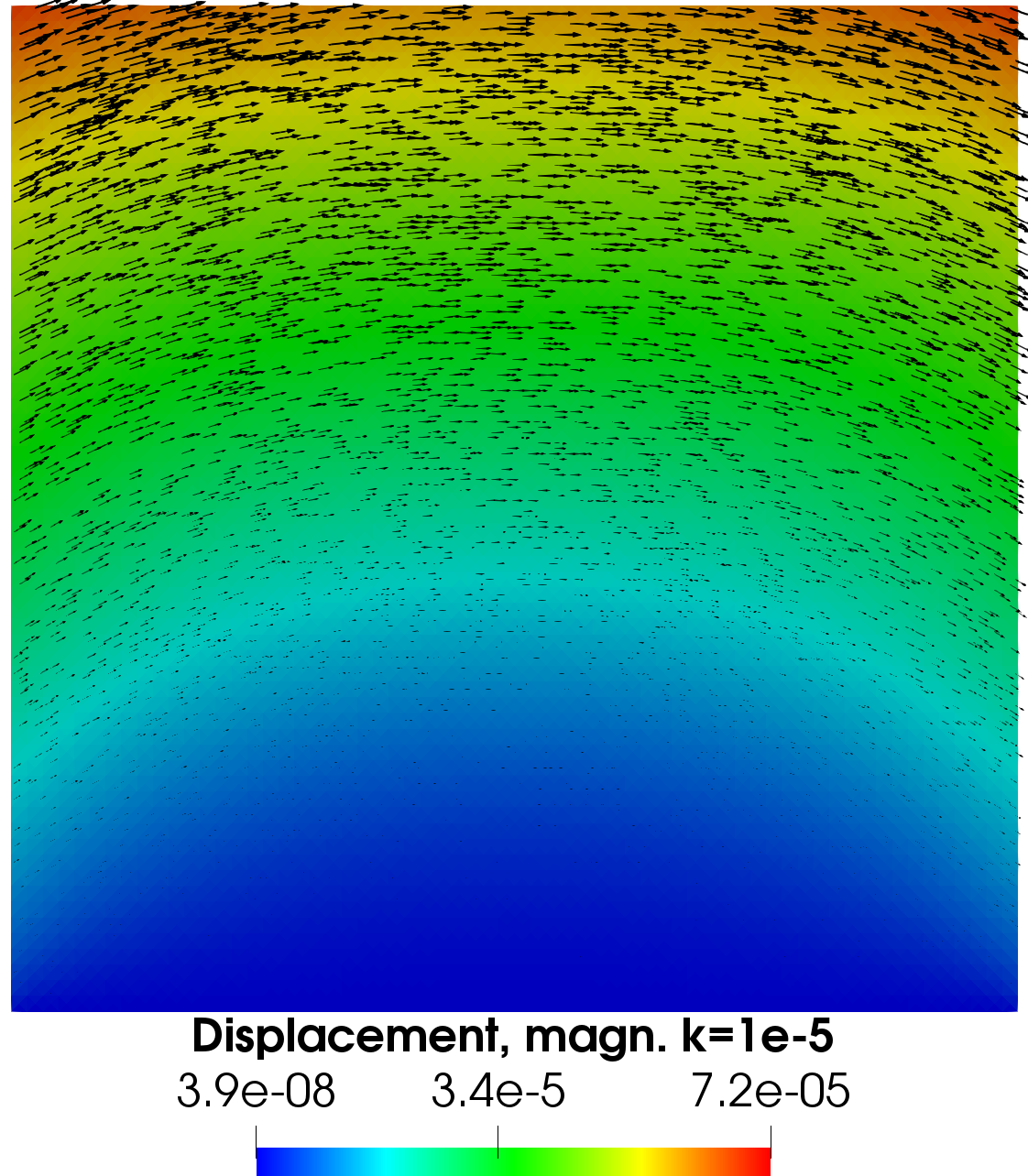}
    		\includegraphics[width=0.25\textwidth]{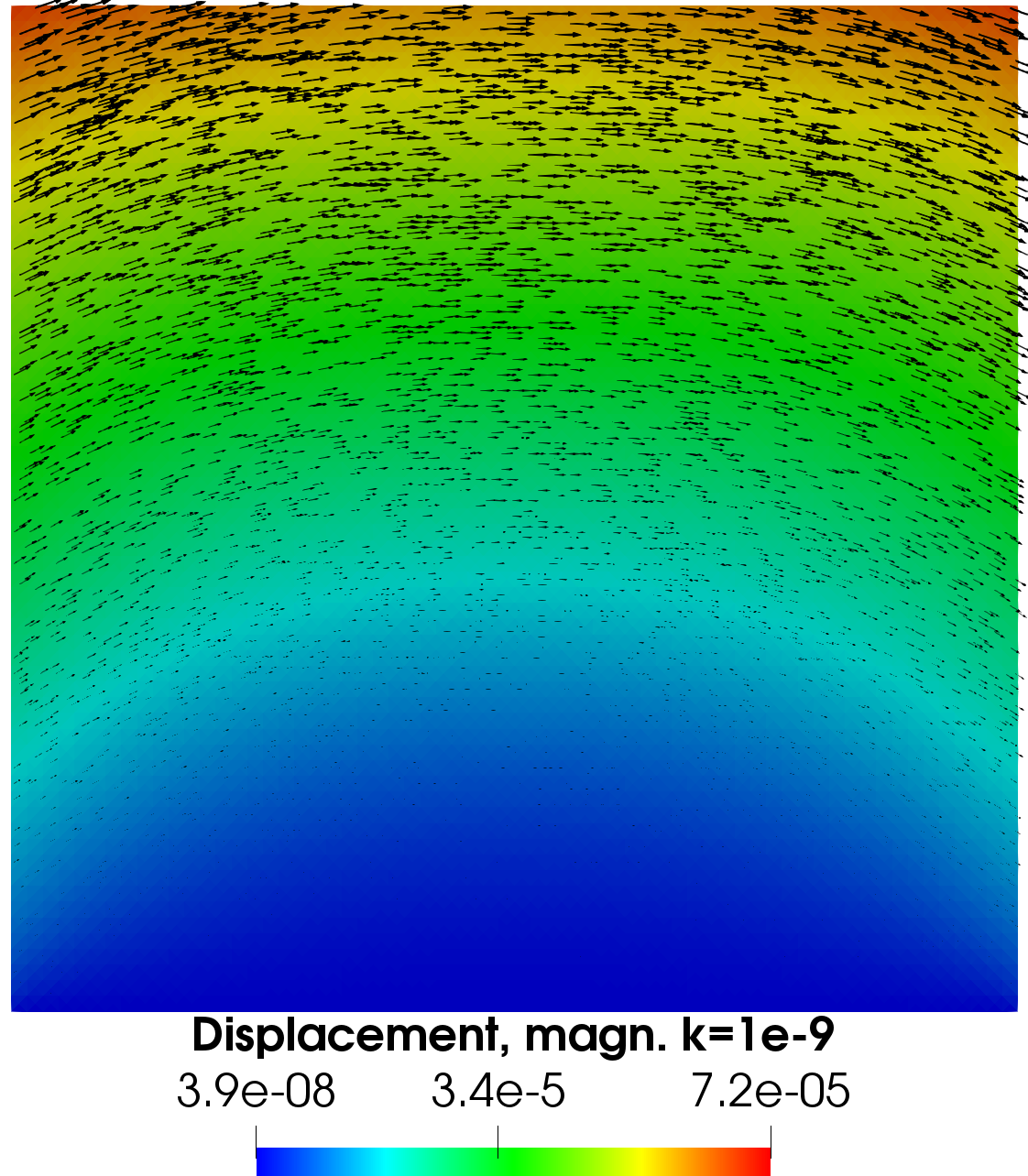}
    		\label{fig:7_1}
	\end{subfigure}		
	\hspace{-10em}
	\begin{subfigure}[b]{0.34\textwidth}
		\includegraphics[width=\textwidth]{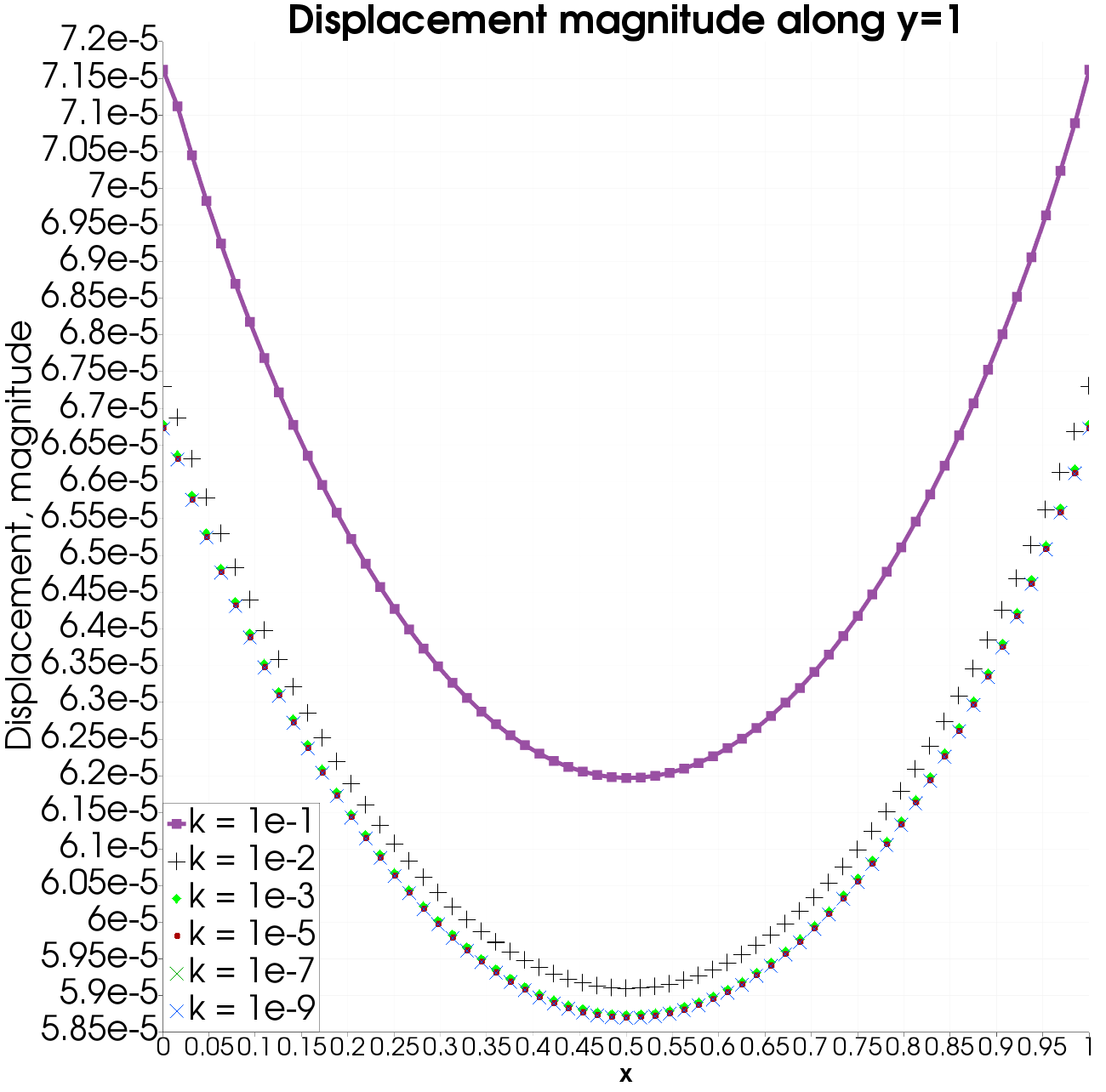}
		\label{fig:8_1}
	\end{subfigure}
	\caption{Computed displacement solutions for Example 4, $h=1/32$.}\label{fig:7}
\end{figure}


\section{Conclusion}
We presented two $\BDM_1$-based MFE methods with quadrature for
elasticity with weak stress symmetry on simplicial grids. The MSMFE-0
method reduces to a cell-centered scheme for displacements and
rotations, while the MSMFE-1 method reduces to a cell-centered scheme
for displacements only. To prove stability of the MSMFE-1 method, we
established a discrete inf-sup condition with quadrature for the
Stokes problem.  We showed that the resulting algebraic system for
each of the methods is symmetric and positive definite. We proved
first order convergence for all variables in their natural norms, as
well as second order convergence for the displacements at the cell
centers. The methods can also be developed on quadrilateral grids, which
is the subject of a forthcoming paper.

\bibliographystyle{abbrv}
\bibliography{msmfe-simpl}
\end{document}